\author{Omer Angel 
  \and Duncan Dauvergne
  \and Alexander E. Holroyd
  \and B\'alint Vir\'ag }
\title{The Local Limit of Random Sorting Networks}
\newcounter{bibcount}
\patchcmd{\@lbibitem}{\item[}{\item[\hfil\stepcounter{bibcount}{\thebibcount.}}{}{}
\renewcommand\NAT@bibsetup%
  \crefname{theorem}{Theorem}{Theorems}
  \crefname{customthm}{Theorem}{Theorems}
  \crefname{lemma}{Lemma}{Lemmas}
  \crefname{prop}{Proposition}{Propositions}
  \crefname{defn}{Definition}{Definitions}
  \crefname{corollary}{Corollary}{Corollaries}
  \crefname{section}{Section}{Sections}
  \crefname{figure}{Figure}{Figures}
\numberwithin{equation}{section}
\renewcommand{\bibnumfmt}[1]{[\citeauthor \citeyear]}
\theoremstyle{plain}
\newtheorem{theorem}{Theorem}[section]
\newtheorem{corollary}[theorem]{Corollary}
\newtheorem{prop}[theorem]{Proposition}
\newtheorem{lemma}[theorem]{Lemma}
\theoremstyle{definition}
\newtheorem*{definition}{Definition} 
\newtheorem*{remark}{Remark}
\newcommand{\N}{\mathbb N}
\newcommand{\Z}{\mathbb Z}
\newcommand{\E}{\mathbb E}
\newcommand{\R}{\mathbb R}
\newcommand{\HH}{\mathbb H}
\newcommand{\oper}{\operatorname}
\newcommand{\inte}{\mathbb{Z}}
\newcommand{\nat}{\mathbb{N}}
\newcommand{\real}{\mathbb{R}}
\newcommand{\half}{\mathbb{H}}
\newcommand{\prob}{\mathbb{P}}
\newcommand{\hookp}{\mathbf{P}}
\newcommand{\expt}{\mathbb{E}}
\newcommand{\indic}{\mathbbm{1}}
\newcommand{\sset}{\subset}
\newcommand{\la}{\lambda}
\newcommand{\al}{\alpha}
\newcommand{\mathforall}{\text{ for all }}
\newcommand{\mathand}{\;\text{and}\;}
\newcommand{\mathor}{\;\text{or}\;}
\newcommand{\mathas}{\;\text{as}\;}
\newcommand{\ga}{\gamma}
\newcommand{\ep}{\epsilon}
\newcommand{\sig}{\sigma}
\newcommand{\scrA}{\mathcal{A}}
\newcommand{\scrG}{\mathcal{G}}
\newcommand{\scrK}{\mathcal{K}}
\newcommand{\scrM}{\mathcal{M}}
\newcommand{\scrC}{\mathcal{C}}
\newcommand{\scrH}{\mathcal{H}}
\newcommand{\scrS}{\mathcal{S}}
\newcommand{\scrI}{\mathcal{I}}
\newcommand{\card}[1]{\left\vert #1 \right\vert}
\newcommand{\close}[1]{\mkern 1.5mu\overline{\mkern-1.5mu#1\mkern-1.5mu}\mkern 1.5mu}
\newcommand{\ddd}{\dots}
\newenvironment{customthm}[1]
  {\innercustomthm}
  {\endinnercustomthm}
\newcommand{\eqd}{\stackrel{d}{=}}
\newcommand{\cvgd}{\stackrel{d}{\to}}
\newcommand{\symdif}{\triangle}
\newcommand{\rev}{\text{rev}}
\newcommand{\id}{\text{id}}
\newcommand{\sleq}{\preceq}
\newcommand{\sgeq}{\succeq}
\newcommand{\smin}{\setminus}
\providecommand{\keywords}[1]{{\textit{Keywords:}} #1}
\newcommand{\note}[1]{}
\begin{document}
\maketitle

\begin{abstract}
   A sorting network is a geodesic path from $12 \cdots n$ to $n \cdots 21$
  in the Cayley graph of $S_n$ generated by adjacent transpositions.
  For a uniformly random sorting network, we establish the existence of a
  local limit of the process of space-time locations of transpositions in a neighbourhood of $an$ for $a\in[0,1]$ as $n\to\infty$.  Here time
  is scaled by a factor of $1/n$ and space is not scaled.

  The limit is a swap process $U$ on $\mathbb{Z}$.  We show that $U$ is
  stationary and mixing with respect to the spatial shift and has
  time-stationary increments.  Moreover, the only dependence on $a$ is
  through time scaling by a factor of $\sqrt{a(1-a)}$.

To establish the existence of $U$, we find a local limit for staircase-shaped Young tableaux. These Young tableaux are related to sorting networks through a bijection of Edelman and Greene.
  
%
%
%
\end{abstract}

\keywords{
Sorting network; random sorting network; reduced decomposition; Young tableau; local limit
}

\section{Introduction}

\label{S:intro}
Consider the Cayley graph of the symmetric group $S_n$ where the edges are given by adjacent transpositions
$\pi_i = (i, i+1)$ for $i \in \{1, \dots, n-1\}$.  The permutation
farthest from the identity $\id_n = 12\cdots n$ is the reverse permutation
$\rev_n = n\cdots 21$, at distance $\binom{n}{2}$. A \textbf{sorting network} is a path in this Cayley graph from the
identity to the reverse permutation of minimal possible length, namely
$N = \binom{n}{2}$.  Equivalently, a sorting network is a representation $\rev_n = \pi_{k_1} \pi_{k_2} \cdots \pi_{k_N}$, with the path being the sequence $\sig_t = \Pi_{i \le t} \pi_{k_i}$, so that $\sig_0 = \id_n$ and $\sig_N = \rev_n$.

For this reason, sorting networks are also known as {\bf reduced decompositions} of the reverse permutation. Under this name, the combinatorics of sorting networks have been studied in detail, and there are connections between sorting networks and Schubert calculus, quasisymmetric functions, zonotopal tilings of polygons, and aspects of representation theory. We refer the reader to \cite{stanley1984number, manivel2001symmetric, garsia2002saga, bjorner2006combinatorics} and \cite{tenner2006reduced}  for more background in this direction.

\begin{figure}
   \centering
   \includegraphics[scale=0.7]{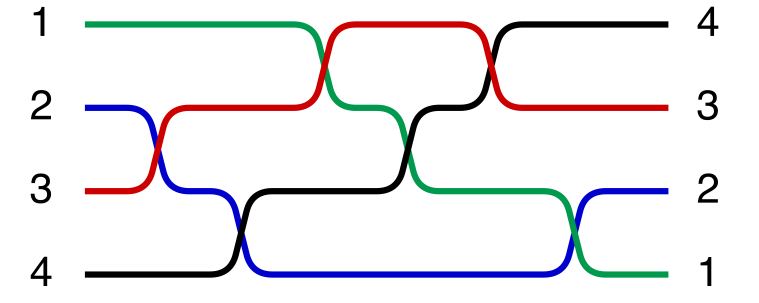}
   \caption{A ``wiring diagram" for a sorting network with $n = 4$. In
     this diagram, trajectories are drawn as continuous curves for
     clarity, whereas our definition specifies that trajectories make jumps at swap
     times.}
   \label{fig:wiring}
\end{figure}

Sorting networks also arise in computer science, as a sorting network can be viewed as an algorithm for sorting a list.  Consider an array with
$n$ elements, and let $\pi_{k_1} ,\pi_{k_{2}}, \dots, \pi_{k_N}$ be the
sequence of adjacent transpositions in a sorting network. At each step
$i$, instead of swapping the elements at positions $k_i$ and
$k_{i+1}$, rearrange these elements in increasing order.  After all $N$
steps, this process will sort the entire array from any initial
order. If we start with $\rev_n$, then every comparison will result in
a swap. 

It is helpful to think of the elements of $\{1,\dots,n\}$ as labeled
particles.  Each step in the sorting network has the effect of swapping
the locations of two adjacent particles.  In this way, we can talk of
the particles as having $\Z$-valued trajectories, with jumps of
$\{0,\pm 1\}$ at integer times. Exactly two particles make a non-zero
jump at each time.  We denote by $H_k(\cdot)$ the trajectory of
particle $k$. Specifically, for $t\leq N$ we have
$\sigma_{\lfloor t \rfloor}(H_k(t)) = k$ (here and later
$\lfloor t \rfloor$ denotes the integer part).

The number of sorting networks of order $n$ has been computed by
\cite{stanley1984number}.  Stanley observed that the number of sorting
networks equals the number of standard Young tableaux of a certain
staircase shape.  A bijective proof of this was provided by \cite{edelman1987balanced}.  Later, another bijective proof was
found by \cite{little2003combinatorial}, and recently \cite{HY} proved that the two bijections coincide.

\note{modify paragraph}
The study of random sorting networks was initiated by
\cite{angel2007random}.
That paper considered the possible scaling limits
of sorting networks, namely weak limits of the scaled process
\[
\lim_{n\to\infty} \frac{1}{n} H_{\lfloor an \rfloor }(t/N).
\]
Here, space is rescaled by a factor of $n$ and time by a factor of
$N=\binom{n}{2}$.  With this scaling, $H_{\lfloor an \rfloor}$ becomes a function from
$[0,1]$ to $[0,1]$, starting at $a$ and terminating at $1-a$.  It is
not a priori clear that the limit exists (in distribution) or even that the
limit is continuous.  While existence of the above limit is still an
open problem, it is shown in \cite{angel2007random} that the scaled
trajectories are equicontinuous in probability, and that subsequential
limits are
H\"older$(\alpha)$ for any $\alpha<1/2$.  

It is also conjectured --
based on strong numerical evidence -- that particle trajectories
converge to sine curves as $n \to \infty$.  We refer the reader to
\cite{angel2007random, angel2010random, kotowski2016} and \cite{rahman2016geometry}  for further
results and conjectures in this direction.
See also \cite{angel2009oriented} for the scaling limit of certain
non-uniform random sorting networks under this scaling.  Different local
properties of random sorting networks have also been studied in
\cite{angel2012pattern}.

\subsection{Limits of Sorting Networks}

In this paper we are interested in local limits of sorting networks.
These limits are local in the sense that space is not scaled at all.
However, time still needs to be scaled by a factor of $1/n$ to observe a
non-constant process.  Thus instead of the sorting process finishing at
time $N$, it will finish at time $N/n = (n-1)/2$.

\note{consistently use space-time, not time-space! Should look through -done}
\note{require no simultaneous jumps? do we prove this? Yes (injectivity).}

\begin{definition}
  A \textbf{swap function} is a function $U: \Z \times \R_+ \to \Z$ with
  the following properties:
  \begin{enumerate}[nosep,label=(\roman*)]
  \item For each $x$, we have that $U(x,\cdot)$ is cadlag.
  \item For each $t$ we have that $U(\cdot,t)$ is a permutation of $\Z$.
  \item Define the trajectory $H_x(t)$ by $U(H_x(t),t)=x$.  Then $H_x$
    is a cadlag path with nearest neighbour jumps for each $x$
    (i.e. the inverse permutation $U^{-1}$ is pointwise cadlag).
    \item For any time $t \in (0, \infty)$ and any $x \in \Z$, 
  $$
  \lim_{s \to t^-} U(x, s) = U(x + 1, t) \qquad \text{if and only if} \qquad  \lim_{s \to t^-} U(x +1, s) = U(x, t).
  $$
  \end{enumerate}
  \end{definition}
  We think of a swap function as a collection of particle trajectories $\{H_x(\cdot) : x \in \Z\}$.
Condition (iv) guarantees that the only way that a particle at position $x$ can move up at time $t$ is if the particle at position $x+1$ moves down. That is, particles move by swapping with their neighbours.

We let $\scrA$ be the space of swap functions endowed with the following topology. A sequence of swap functions $U_n \to U$ if each of the cadlag paths $U_n(x, \cdot) \to U(x, \cdot)$ and $H_{n, x}(\cdot) \to H_x(\cdot)$. Convergence of cadlag paths is convergence in the Skorokhod topology. We refer to a random swap function as a \textbf{swap process}.

\medskip

Our main result is the following limit theorem.

\begin{customthm}{1}\label{T:main}
  There exists a swap process $U$ so that the following holds.  Let
  $u\in(-1,1)$, and let $\{k_n : n \in \nat\}$ be any sequence such that $k_n/n\to (1+u)/2$.  Consider
  the shifted, and time scaled swap process
  \[
    U_n(x,t) = \sigma^n_{\lfloor nt/\sqrt{1-u^2} \rfloor}(k_n + x) - k_n,
  \]
  where $\sigma^n$ is a uniformly random $n$-element sorting network. Then
  \[
    U_n \xrightarrow[n\to\infty]{d} U.
  \]
  Moreover, $U$ is stationary and mixing of all orders with respect to
  the spatial shift, and has stationary increments in time: the
  permutation $(U(\cdot,s)^{-1}U(\cdot,s+t))_{t\ge 0}$ has the same law
  as $(U(\cdot,t))_{t\geq 0}$.
\end{customthm}

The scaling in Theorem \ref{T:main} can be thought of in the following way. We first choose a spatial location $u \in (-1 , 1)$ and look at a finite window around the position $(1 + u)n/2$. That is, we are concerned with particles whose labels are in a window $[(1+u)n/2 - K, (1 + u)n/2 + K]$. We want to know what the start of the sorting network looks like in this local window, at a scale where we see each of the individual swaps in the limit. To do this, we need to rescale time by a factor of $1/n$. Note that the semicircle factor of $\sqrt{1 - u^2}$ accounts for the fact that the swap rate is slower outside of the center of a random sorting network. On the global scale, this was proven in \cite{angel2007random}, so the slow-down does not come as a surprise.

To precisely define each $U_n$, for $x$ such that $k_n+x\notin \{1,\dots,n\}$, we
use the convention that $U_n(x,t)=x$. For $t>N/n$ we use the convention
that $U_n(x,t) = U_n(x,N/n)$.  By doing this, any sorting network corresponds to a swap function. Convergence in the above theorem is weak convergence in the topology on $\scrA$.

 Recall also that a process is spatially
mixing of order $m$ if translations by $k_1,\dots,k_m$ are
asymptotically independent as $\min |k_i-k_j| \to\infty$.  Spatial
mixing (even of order $2$) of the system implies ergodicity.



As a by-product of the proof, we also show that for any $t$,
there is a bi-infinite sequence of particles in the limit process $U$ that have
not moved by time $t$.  Consequently, $\Z$ can be split into finite
intervals that are preserved by the permutation $U(\cdot, t)$.
Furthermore, we prove convergence in expectation of the number of swaps
between positions $x$ and $x+1$ by some time $t$.  Specifically, if
$s(x, t, U)$ is the number of swaps between positions $x$ and $x+1$
up to time $t$ in the process $U$, then 
\[
\E s(x, t, U_n) \to \E s(x, t, U) = \frac{4}{\pi}t
\qquad    \text{as }n\to\infty.
\]
The expected number of swaps here agrees with corresponding global result obtained in \cite{angel2007random}.

Theorem \ref{T:main} is proven in the $k=0$ case as Theorem~\ref{k0}.  The
general case is a consequence of Theorem~\ref{lim-outside}.

\subsection{Limits of Young tableaux}

To prove Theorem \ref{T:main}, we will first prove a limit theorem for
staircase Young tableaux, and then use the Edelman-Greene bijection to
translate this into a theorem about sorting networks. This theorem is of
interest in its own right.

Recall that for an integer $N$, a \textbf{partition} $\lambda$ of $N$ is
a non-increasing sequence $(\lambda_1,\dots,\lambda_n)$ of positive
integers adding up to $N$.  The size of $\lambda$ is
$N = |\lambda| = \sum \lambda_i$.

We shall use the convention $\N=\{1,2,\dots\}$.  The \textbf{Young
  diagram} associated with $\lambda$ is the set $A\subset \N\times\N$
given by $A = \{(i,j) : j \leq \lambda_i\}$.  A Young diagram is
traditionally drawn with a square for each element, and elements of $A$
are referred to as squares.  The lattice $\N \times \N$ is usually
oriented so that the square $(1,1)$ is in the top left corner of the
lattice, but a different orientation will be convenient for us as
discussed below.
The \textbf{staircase} Young diagram of order $n$ is the diagram of the
partition $(n-1,n-2,\dots,1)$, of size $N=\binom{n}{2}$.  

A {\bf standard Young tableau} of shape $\lambda$ is an order-preserving
bijection $f: A\to \{1,\dots,N\}$, i.e., $f$ is increasing in both $i$
and $j$.  For both the statement of our results and their proofs, it
will be more convenient to work with \textbf{reverse standard Young
  tableaux}, where the bijection is order-reversing.  Clearly
$f \mapsto N+1-f$ is a bijection between standard and reverse standard
Young tableaux.

Our second main result is a limit theorem for the entries near the diagonal of a
uniformly random staircase shaped Young tableau of order $n$.  To
introduce this theorem, we must first change the coordinate system for
staircase Young tableaux.

\begin{figure}
   \centering
   \includegraphics[width=.5\textwidth]{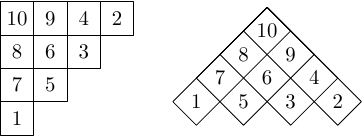}
   \caption{The staircase Young diagram of order $5$, i.e., of
     $\lambda = (4,3,2,1)$, with squares labelled by a reverse standard
     Young tableau, shown in both the usual and in our coordinate system
     in $\HH$.  One can think of the Young diagram $\lambda = (4,3,2,1)$
     as ten blocks in a triangular pile.  The entries in a tableau of
     shape $\lambda$ give a possible order in which to place these
     blocks in the pile while respecting gravity.}
   \label{fig:tableau}
\end{figure}

Define $\HH = \{(x,y) \in \Z\times\N : x+y \in 2\Z\}$.  We introduce a
partial order on $\HH$ given by $(x,y) \leq (x',y')$ if $x+y \leq x'+y'$
and $y-x \leq y'-x'$ (i.e.\ $(x,y)\leq (x',y')$ if there is a path in
the lattice from the $(x,y)$ to $(x',y')$, increasing in the
$y$-coordinate.  For $(c,n-1) \in \HH$, define 
$T(c,n) = \{z \in \HH : z \leq (c,n-1)\}$.
The set $T(c,n)$ is the image of a staircase shaped Young diagram of
order $n$ by the mapping $(i,j)\mapsto (c-i+j, n+1-i-j)$.
We extend the definition of
a staircase diagram of order $n$ and use that term for $T(c,n)$. 
We call the value $c$ the
\textbf{center} of the diagram.

The order on
$T(c,n)$ induced by the order on $\HH$ corresponds to reversing the
order on the Young diagram induced by the order on $\nat \times \nat$.  Therefore any order-preserving
bijection $G: T(c,n) \to \{1,\dots,N\}$ is a reverse standard Young
tableau.  We extend $G$ to a function from $\HH \to [0, \infty]$ by
setting $G(z) = \infty$ for all $z \notin T(c,n)$.  In the topology of
pointwise convergence in this function space, we then have the following
theorem about convergence of uniformly random reverse standard Young tableaux.

\begin{customthm}{2}
\label{T:YT_limit}
  There exists a random function $F:\HH\to[0,\infty)$ such that the
  following holds.  Fix $u\in(-1,1)$, and a sequence $k_n$ with
  $k_n/n\to u$.  Let $G_n$ be a uniformly random staircase
  Young tableau on $T(k_n,n)$. Then
  \[
    \frac{G_n}{n} \xrightarrow[n\to\infty]{d} \frac{1}{\sqrt{1 - u^2}} F.
  \]
  Moreover $F$ is stationary and mixing of all orders with respect to
  translations by $(2m,0)$ for $m \in \inte$.
\end{customthm}

The different components of Theorem
\ref{T:YT_limit} are proved in Theorem \ref{T:tableau-centre},
Proposition \ref{asym-ind}, and Theorem \ref{lim-outside} below.

\subsection*{Overview}

The structure of the paper is as follows.  Section \ref{S:hook-form} contains the
necessary background about Young tableaux and the Edelman--Greene
bijection, as well as some basic domination lemmas about Young tableaux.  This will allow us to conclude Theorem \ref{T:main} from the limit theorem for staircase Young tableaux, Theorem \ref{T:YT_limit}. Section
\ref{S:convergence} contains the proof of Theorem \ref{T:YT_limit} for the case $u=0$.
\note{reformulate is not quite right. revise overview - done...}

In order to translate Theorem \ref{T:YT_limit} using the Edelman--Greene
bijection to a theorem about sorting networks, we require certain
regularity properties of the Young tableau limit.  These are proved in
Sections \ref{S:rate} and \ref{S:regularity}.  Finally, we deduce Theorem \ref{T:main} in the case $u=0$ in
Section \ref{S:llcentre}.  In Section \ref{S:outside}, we extend Theorem \ref{T:YT_limit} and consequently
Theorem \ref{T:main} to arbitrary $u \in (-1, 1)$ by exploiting a monotonicity
property of random Young tableaux.

\bigskip

\noindent {\bf Remark.} \qquad We note that \cite{gorin2017} have results
that overlap some of ours.  Our proof of the local limit is
probabilistic, and is based on the Edelman--Greene bijection, the hook
formula and an associated growth process, and a monotonicity property
for random 
Young tableaux.  Gorin and Rahman take a very different approach,
using a contour integral formula for Gelfand--Tsetlin patterns discovered by 
\cite{petrov2014asymptotics}.  This allows them to get determinantal
formulas for the limiting process.  While for many models exact formulas
are the only known approach to limit theorems, we show that for random
Young tableaux the local limit and its properties can also be
established from first principles.

\section{The Hook Formula, the Edelman--Greene Bijection and Tableau Processes}
\label{S:hook-form}
In this section, we introduce some preliminary information regarding
Young tableaux and the Edelman--Greene bijection. We then use the hook formula to prove some basic domination lemmas about pairs of growing tableau processes.
\paragraph{The hook formula.}

Let $d(\lambda)$ be the number of reverse standard Young tableaux of shape
$\lambda$. \cite{frame1954hook} proved
a remarkable formula for $d(\lambda)$.  To state it, we first need
some definitions.  Let $A(\la) \subset \N \times \N$ be the Young diagram of
shape $\lambda$. For a square $z = (i,j) \in A(\la)$, define the {\bf hook}
of $z$ by
\[
  H_z = \Big\{(i,j') \in A : j' \geq j\Big\} \cup \Big\{(i',j) \in A: i'
  \geq i\Big\}.
\]
Define the \textbf{hook length} of $z$ by $h_z = |H_z|$.  We also define
the {\bf reverse hook} for $z$ by
\[
  R_z = \{w\in A\smin \{z\} : z \in H_w\}.
\]
The reverse hook will be of use later when manipulating the hook formula. We note here for future use hook lengths and reverse hook lengths in $\HH$. For a point $z = (x, y)$ in a diagram $T(c, n)$, we have that $h_z = 2y - 1$, and that $|R_z| = n - 1 - y$.

\begin{theorem}[Hook Formula, \cite{frame1954hook}]
  With the above notations, we have
  \[
    d(\lambda) = \frac{|\lambda|!}{\prod_{z \in A(\lambda)} h_z}.
  \]
\end{theorem}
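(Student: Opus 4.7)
The plan is to use the probabilistic ``hook walk'' argument of Greene, Nijenhuis, and Wilf. I proceed by induction on $N = |\lambda|$, the base case $N=0$ being trivial. The count $d(\lambda)$ satisfies
\[
d(\lambda) \;=\; \sum_{z \text{ corner of } \lambda} d(\lambda \smin \{z\}),
\]
obtained by conditioning on the location of the smallest entry in a reverse standard Young tableau, which must lie in a corner. Setting $f(\lambda) := N!/\prod_{w \in A(\lambda)} h_w$, it suffices by induction to prove the same recursion for $f$, which after dividing through by $f(\lambda)$ is equivalent to $\sum_{z} p(z) = 1$, where $p(z) := f(\lambda \smin \{z\})/f(\lambda)$. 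Removing the corner $z=(a,b)$ decreases $h_w$ by exactly one for each $w \neq z$ sharing a row or column with $z$, and leaves all other hook lengths unchanged, so a direct calculation (using $h_z = 1$ for any corner $z$) gives
\[
p(z) \;=\; \frac{1}{N}\prod_{w} \frac{h_w}{h_w - 1},
\]
the product being over all $w \neq z$ in the row or column of $z$.

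To prove $\sum p(z) = 1$, I realize $p(z)$ as the hitting probability of the \emph{hook walk}: start at a square $w_0 \in A(\lambda)$ chosen uniformly, and at each step move from $w_i$ to an element of $H_{w_i} \smin \{w_i\}$ chosen uniformly. Since the hook length strictly decreases along the walk, it terminates at a corner. Hence it suffices to show $\mathbb{P}(\text{walk ends at } z) = p(z)$ for every corner $z = (a,b)$. I do this by conditioning on the list of distinct rows $a_0 < \cdots < a_k = a$ and columns $b_0 < \cdots < b_\ell = b$ visited by the walk. Each step changes either the row or the column (but not both), so by a short induction on walk length one obtains
\[
\mathbb{P}(\text{these rows and columns visited}) \;=\; \frac{1}{N} \prod_{i=0}^{k-1} \frac{1}{h_{(a_i, b)} - 1} \prod_{j=0}^{\ell - 1} \frac{1}{h_{(a, b_j)} - 1}.
\]
Summing over all subsets of the leg $\{(a',b) : a' < a\}$ and of the arm $\{(a,b') : b' < b\}$ of $z$ factorizes as $\tfrac{1}{N} \prod_w \bigl(1 + \tfrac{1}{h_w - 1}\bigr)$, which is exactly $p(z)$.

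The main technical obstacle is the displayed identity for the conditional probability of visiting a prescribed set of rows and columns. It is not immediate because at each step the chance of a row versus a column move depends on the current arm and leg lengths; the clean product form emerges only after summing over all orders in which the rows and columns can be visited, and the resulting sums telescope in a delicate way that must be checked carefully by induction on the walk length. The probabilistic framing makes the telescoping natural, but it is the technical heart of the argument. An alternative route is the bijective proof of Novelli, Pak, and Stoyanovsky, but the hook walk argument is shorter and more conceptual, and fits the probabilistic spirit of the rest of the paper.
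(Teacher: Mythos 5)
The paper does not prove this theorem at all: it is quoted as a classical result of Frame, Robinson and Thrall, and used as a black box (only its consequence, the explicit formula for the hook probability $\hookp(B,z)$, is manipulated later). Your proposal is the standard Greene--Nijenhuis--Wilf probabilistic proof, and it is correct in outline. The reduction to $\sum_z p(z)=1$ via the corner recursion is right (conditioning on the location of the entry $1$ in a reverse standard tableau is exactly how the paper itself derives its sampling procedure), your computation of $p(z)=\tfrac{1}{N}\prod_{w\in R_z}\tfrac{h_w}{h_w-1}$ coincides with the paper's hook probability $\hookp(\lambda,z)$, and the hook walk terminates at a corner since hook lengths strictly decrease along it. The one substantive point you leave unproved is the product formula for the probability that the walk visits a prescribed set of rows and columns ending at $(a,b)$; you correctly flag this as the technical heart, and it does go through by induction on the number of remaining rows and columns (conditioning on the first step and using $h_{(a_0,b_0)}-1=(h_{(a_0,b)}-1)+(h_{(a,b_0)}-1)$ for the relevant cells), after which the sum over subsets of $R_z$ factorizes exactly as you state. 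Two cosmetic remarks: what you call the ``arm'' and ``leg'' of the corner $z$ is really the reverse hook $R_z$ in the paper's terminology (the arm and leg of a corner are empty), and it is worth noting that the paper's own remark in Section 2 already points to the hook walk as a sampling device, so your argument fits naturally with the machinery the paper builds around this theorem.
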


\paragraph{The Edelman--Greene bijection.}

For the staircase Young diagram of order $n$, the hook formula gives
\[
  d(\lambda_n) = \frac{\binom{n}{2}!}{1^n 3^{n-1} 5^{n-2} \ddd (2n -3)^1}.
\]
As noted, this is also the formula for the number of  sorting networks
of order $n$ given in \cite{stanley1984number}.  We now describe the
bijection between these two sets given by \cite{edelman1987balanced}.

We recount here a version of the Edelman--Greene bijection for rotated
(defined on subsets of $\HH$) reverse standard Young tableaux.  More
precisely, the map as we describe it gives a bijection between Young
tableaux on the diagram $T(c,n)$ and sorting networks of size $n$ with
particles located at positions $\{c-(n-1), c-(n-1)+2, \dots, c+(n-1)\}$.
Note that here particles are located at positions in $2\Z$, and not in $\Z$ as in the statement of Theorem \ref{T:main}.
This is done to optimize the description of the bijection.  To
accommodate this, for odd $k$ we use $\pi_k$ to denote the swap of the particles at positions
$k-1$ and $k+1$.
\note{what is a sorting network on an interval? I've removed  this language.}

Given a reverse standard Young tableau $G: T(c,n) \to \{1,\dots,N\}$, we
generate a sorting network $\pi_{k_1} \pi_{k_2}, \ddd \pi_{k_N}$ and a sequence of Young
tableaux $(G_t)_{t\leq N}$, starting with $G_0=G$.  Recall that by
convention $G(z)=\infty$ for $z\notin T(c,n)$.  We repeat the following for
$t\in\{1,\dots,N\}$, computing $k_t$ and $G_t$ from
$G_{t-1}$.  (See \cref{fig:EG1,fig:EG2} for an example.)


\begin{description}
  
\item[Step 1:] Find the point $z_*\in\HH$ such that the value of
  $G_{t-1}(z_*)$ is minimal.  Clearly $z_*= (k,1)$ for some odd $k$.  Set $k_t = k$.

\item[Step 2:] Recursively compute the ``sliding path'' $z_1,z_2,\dots$ as
  follows. Set $z_1=z_*$.  If $z_i = (x,i)$, then
  $z_{i+1} \in (x\pm 1, i+1)$ is chosen to be the point with a smaller value of
  $G_{t-1}$.  If both are infinite then the choice is immaterial.

\item[Step 3:] Perform sliding to update $G$: If $z$ is in the sliding
  path, so that $z=z_i$ for some $i$ then let
  $G_t(z) = G_{t-1}(z_{i+1})$.  Otherwise, let $G_t(z) = G_{t-1}(z)$.
\end{description}

\begin{figure}
  \centering
  \includegraphics[width=.9\textwidth]{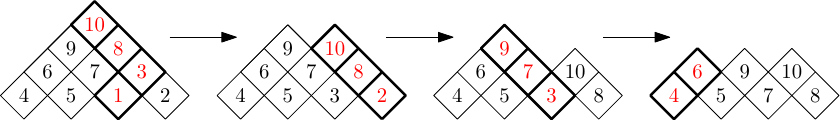}
  \caption{The first three iterations in the Edelman--Greene bijection.
    Squares not shown have $G_t(z)=\infty$.  In each iteration, (the start of) the sliding path is in bold.}
  \label{fig:EG1}
\end{figure}

\begin{figure}
  \centering
  \includegraphics[width=.9\textwidth]{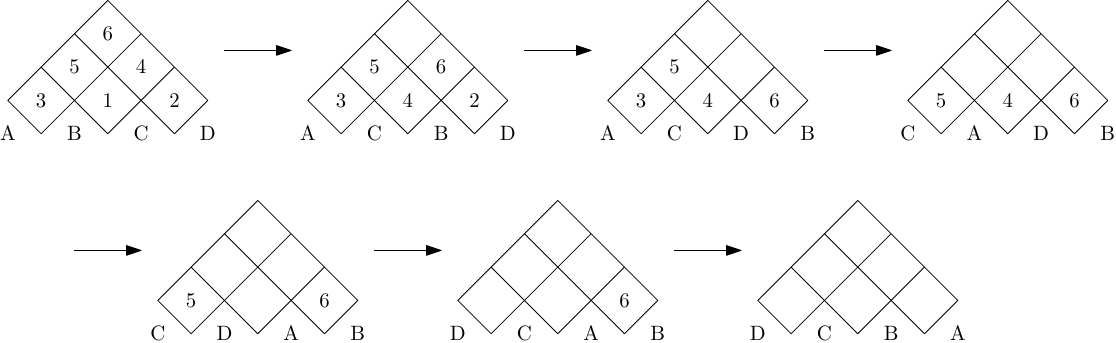}
  \caption{The Edelman--Greene bijection applied to a tableau of order
    $n=4$.  The particles are labelled A--D to distinguish them from the
    entries in the tableau.  The sorting network corresponds to the
    wiring diagram shown in \cref{fig:wiring}.}
  \label{fig:EG2}
\end{figure}

The output of the Edelman--Greene bijection is the swap sequence $(k_i)$
of length $N=\binom{n}{2}$, taking odd values
$k_i \in [c-(n-2),c+(n-2)]$.  Edelman and Greene proved that applying
the given sequence of swaps will reverse the elements of the interval
$[c-(n-1),c+(n-1)] \subset 2\Z$, and moreover, that any sorting network
on this interval results from a unique reverse standard Young tableau on
$T(c,n)$.


\subsection{Uniform Young tableaux}


The Edelman--Greene bijection allows us to sample a uniformly random
sorting network of size $n$ given a uniformly random reverse standard Young tableau of
shape $T(c,n)$.  We say a set $A\subset\HH$ is downward closed if whenever $z \in A$ and
$w\leq z$, then $w\in A$.  In the language of Young diagrams,
such an $A$ is a special case of a skew Young diagram.  Given a reverse
standard Young tableau $G$ on $T(c,n)$, let $A_i = \{z : G(z) \leq i\}$.
Monotonicity of $G$ implies that $A_i$ is downward closed.  Moreover
$|A_i| = i$ for each $i\in\{0,\dots,N\}$, and we have
$A_i \subset A_{i+1}$.  Thus a Young tableau on $T(c,n)$ can be viewed
as a maximal sequence of downward closed subsets
$A_0 = \emptyset \subset A_1 \subset A_2 \subset \dots \subset A_N =
T(c,n)$.  The complementary sets $B_i=T(c, n) \setminus A_i$ are rotated Young
diagrams, and $G|_{B_i}$ is a reverse standard Young tableau on that
diagram (with entries shifted by $i$).
 
If $B$ is a Young diagram, and $G$ is a reverse standard Young tableau
on $B$, then $G(z)=1$ for some square $z\in B$, and this square must
have hook $H_z = \{z\}$.  We call such squares {\bf corners} of $B$.
The restriction of $G$ to $B\setminus\{z\}$ is a reverse standard Young
tableau with all values increased by $1$.  This observation allows us to
use the hook formula to find the probability that in a uniformly random
reverse standard Young tableau of shape $\lambda$, the square containing
1 is a given corner $z$. We call this the {\bf hook probability}, denoted
$\hookp(B,z)$.  A simple calculation shows that
\[
  \hookp(B,z) = \frac{d(B\setminus \{z\})}{d(B)}
  = \frac{1}{|B|} \prod_{y \in R_z} \left(\frac{h_y}{h_y - 1} \right).
\]

This gives a simple procedure for sampling a uniformly random reverse
standard Young tableau on any diagram $B$: Pick a random corner $z_1$ of
$B$ with probability mass function $\hookp(B,z)$ and set $G(z_1)=1$.
Recursively pick a corner $z_2$ of $B\setminus\{z\}$
and set $G(z_2) = 2$, and repeat until all elements of $B$ have been chosen.  In terms of the corresponding growing
sequence of sub-diagrams, this takes the following form: Set
$A_0 = \emptyset$.  Having chosen $\{A_0,\dots,A_{i-1}\}$, pick a corner
$z_i$ of $B\setminus A_{i-1}$ with probability mass function
$\hookp(B\setminus A_{i-1}, z_i)$, and let $G(z_i)=i$ and
$A_i = A_{i-1} \cup \{z_i\}$.  We will primarily be interested in this
process when $B$ is a staircase diagram $T(c,n)$.

\begin{remark}
  While the hook probabilities have an explicit formula, which we use
  directly, one can sample a corner of a diagram with this distribution
  very efficiently using the \emph{hook walk}, a process described in
  \cite{greene2014young}.  We omit the mechanism of the walk since we do
  not need it, but remark that it can be used to provide alternate
  proofs of some of the stochastic domination lemmas that follow.
\end{remark}

\subsection{Continuous time growth}

A significant simplification of our analysis is achieved by Poissonizing
time.  Instead of generating a sequence of growing diagrams $A_i$, we
shall define a continuous time process with the same jump distribution
but moving at the times of a Poisson process.

The \textbf{staircase tableau process} (or simply tableau process) is a
Markov process $X(t) = X(c,n,r)(t)$.  Its law is determined by
parameters $c,n$ and $r$, and it is related to the uniform reverse
standard Young tableau of $T(c,n)$.  The state space of this process
comprises all downward closed subsets $A \subset T(c,n)$.  The initial
state is $X(0) = \emptyset$.  If $A$ and $A\cup\{z\}$ are two states,
then the rate of jump from $A$ to $A\cup\{z\}$ is 
\begin{equation}
  \label{rate-def}
  v_X(z, A) = r \cdot \hookp(T(c, n) \setminus A, z).
\end{equation}
When the process $X$ is clear from context we omit the subscript on the rate $v$. No other jumps are possible.  

\medskip 

Note that the parameter $r$ simply
multiplies all jump rates, so that the process $X(n,c,r)(t)$ has the
same law as $X(n,c,1)(rt)$.  Running these processes at different rates
will be useful, hence the inclusion of $r$ in the notations.  The state
$T(c,n)$ is absorbing. The total rate of jumps from any other state is
$r$, so the first ${n \choose 2}$ jump times of the process coincide with points in a rate-$r$ Poisson process.

Given the process $X$, let the inclusion time of a square $z$ be defined by
\[
  F(z) = \inf \{t : z \in X(t)\}.
\]
These determine the process $X$, since we have $X(t) = \{z : F(z) \leq
t\}$.  Note that $F$ is naturally defined on all
of $\HH$, with $F(z)=\infty$ for $z\notin T_{c,n}$, so that $F \in
[0,\infty]^\HH$. We refer to $F$ as the {\bf inclusion function} for $X$.
The first convergence theorem we prove can now be stated.

\begin{theorem} 
  \label{T:tableau-centre} 
  Let $X_n = X_n(c_n,n,n)$ be a sequence of tableau processes with
  $c_n=o(n)$, and let $F_n$ be the corresponding sequence of inclusion
  functions.  Then $F_n \xrightarrow[n\to\infty]{d} F$ for some random
  $F:\HH\to\R_+$.  Moreover, the limit $F$ is translation invariant, in
  the sense that $F \eqd F  \tau$, where $\tau (x,y) = (x+2, y)$.
\end{theorem}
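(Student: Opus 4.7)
The plan is to prove convergence of $F_n$ in distribution in the topology of pointwise convergence on $\HH$ by establishing tightness together with convergence of finite-dimensional distributions, the latter coming from convergence of the jump rates of the Markov processes $X_n$. Translation invariance will then follow from the freedom in choosing $c_n$.

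For tightness at a fixed $z \in \HH$, we observe that $F_n(z)$ is the sum over the (finitely many) $\HH$-predecessors $w \le z$ of the waiting times between $w$ becoming a corner of the current unincluded set $B$ and being added; each such waiting time is exponential with rate $v_n(w, A) = (n / |B|) \prod_{w' \in R_w^B} h_{w'}^B / (h_{w'}^B - 1)$. A Stirling-type asymptotic computation, whose prototypical instance yields $v_n(z, \emptyset) = (n/N) \prod_{w \in R_z} h_w/(h_w - 1) \to 4/\pi$ for a height-one corner near the center of the staircase, will provide uniform positive lower bounds on these rates and thus tightness of each $F_n(z)$.

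The core analytical step is the convergence of the jump rates themselves: for each $z \in \HH$ and each finite downward-closed $A \subset \HH$ making $z$ a corner of the local complement, $v_n(z, A)$ should converge as $n \to \infty$ to an explicit limit $v_\infty(z, A)$. We would split the hook-length product over $R_z^B$ (with $B = T(c_n, n) \setminus A$) into a local part near $z$, depending only on the finite configuration of $A$, and a bulk part over squares in $R_z$ far from $z$. The bulk part combined with the prefactor $n/|B|$ is evaluated asymptotically via Stirling, while the ratio of the bulk products ``with'' versus ``without'' $A$ is a convergent infinite product because each per-term discrepancy is of order $1 + O(1/k^2)$. These limit rates then determine a Markov process on the locally finite downward-closed subsets of $\HH$ started from $\emptyset$; non-explosion holds because each $z$ has finitely many $\HH$-predecessors and the rates are bounded below, so the associated inclusion function $F$ is a.s.\ finite, and a standard coupling of Markov processes with converging rates yields $F_n \cvgd F$.

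Translation invariance follows by noting that $c_n = o(n)$ implies $c_n + 2 = o(n)$, so both $X_n(c_n, n, n)$ and $X_n(c_n + 2, n, n)$ have inclusion functions converging to the same limit $F$; since the second is obtained from the first by the spatial shift $\tau(x, y) = (x + 2, y)$ (with the convention $F_n = \infty$ outside the diagram), we obtain $F \eqd F \circ \tau$. The principal obstacle is the rate-convergence step: by any fixed time $t > 0$ the process $X_n$ has added order $n$ squares, so $B(t)$ differs from $T(c_n, n)$ macroscopically, and the argument must show that this macroscopic discrepancy contributes only a finite, asymptotically controlled correction to the rate at a local corner $z$---which is precisely what the convergent infinite product on the bulk of the hook ratios is designed to provide.
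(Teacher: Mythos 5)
Your tightness argument and your derivation of translation invariance are both essentially sound (the waiting times you describe are not exponential, since the rate at a corner changes as other squares are added elsewhere, but by Lemma \ref{stoch-dom-1} the rates are monotone in the configuration, so each waiting time is stochastically dominated by an exponential with the minimal rate and the bound survives). The genuine gap is in the finite-dimensional convergence step. You compute limiting rates $v_\infty(z,A)$ only for \emph{finite} downward-closed $A\subset\HH$, but at any fixed time $t>0$ the state $X_n(t)$ contains about $nt$ squares spread across the whole bottom of the staircase, so in the limit the state is an infinite, locally finite configuration, and the rate at a corner $z$ depends on all of it: squares of $A$ at horizontal distance of order $K$ from $z$ perturb the hook lengths of the squares of $R_z$ at height of order $K$ and contribute a multiplicative factor $1+O(t/K)$ to the rate. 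This vanishes as $K\to\infty$ but is a nontrivial random correction for every fixed window, so the limiting dynamics are not determined by the rates at finite configurations. Consequently there is no ``standard coupling of Markov processes with converging rates'' to invoke: you would first need to construct a growth process on infinite configurations with these nonlocal rates, prove it is well posed, and then prove convergence with uniform control of the far-field error. None of this is supplied; it is acknowledged only as ``the principal obstacle,'' which is precisely where the proof has to happen.

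The paper never identifies the limit rates at all. It uses compactness of $[0,\infty]^{\HH}$ to extract subsequential limits of $F_n$ and then forces them to coincide by a stochastic sandwich: Corollary \ref{monotone-cor} shows that for any $\ep>0$ the sped-up process $X(0,n,(1+3\ep)n)$ dominates $X(0,m,m)$ up to a stopping time for all large $n$, and Lemma \ref{domination-implies-convergence} converts this comparison, with speed-up factor tending to $1$, into uniqueness of the subsequential limit; pointwise tightness (Lemma \ref{point-tight}) comes from the same domination. If you want to salvage your route you must prove locality and concentration of the rates uniformly over the random configurations actually visited; otherwise the monotonicity argument, which needs only the crude rate comparisons of Lemmas \ref{stoch-dom-criterion}--\ref{stoch-dom-general}, is the path of least resistance.
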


We will use the notation $\tau$ throughout the paper to signify horizontal translation on $\half$.

\medskip
\note{Fix the following: done}

By the law of large numbers for the Poisson process, the limit of the inclusion functions $F_n$ for the processes $X_n$ is the same as the limit of a uniformly random reverse standard Young tableau on $T(c_n, n)$ with entries scaled by $1/n$. Thus Theorem \ref{T:tableau-centre} immediately implies the convergence and translation invariance in Theorem \ref{T:YT_limit} in the case $u = 0$. We will similarly prove Theorem \ref{T:YT_limit} for $u \neq 0$ in Section \ref{S:outside} by again Poissonizing time, noting that this does not change the limit.

\medskip

Note also that $T(c, n)$ is only defined when $c$ and $n$ have opposite parity, so when taking tableau limits for constant $c$, we may need to change the value of $c$ by 1, depending on whether $n$ is odd or even. In all of our proofs, shifting the position that the tableaux are centered at by 1 does not affect any of the arguments, as all of the domination lemmas we use are unaffected by distance changes of size $o(n)$. Therefore from now on, we will ignore issues of the parity of $c$ and $n$.

\subsection{Stochastic domination}
\label{S:dom}
A central tool in our proof of existence of certain limits is stochastic
domination of growth processes.  Subsets of $\HH$ are naturally ordered
by inclusion.  For coupled tableau processes $X$ and $X'$, we say that $X$ is
\textbf{dominated by $X'$ up to time} $T$ if for all $t\leq T$
we have $X(t) \subset X'(t)$.  In terms of the inclusion functions, this
can be stated equivalently as $F \geq F' \wedge T$ in the pointwise order on inclusion functions (note the order
reversal: a smaller process $X$ corresponds to larger inclusion times
$F$.)  In light of Strassen's theorem (see \cite{strassen}), we have that $X$
is stochastically dominated by $X'$ if there is a coupling of the two so
that domination holds, and write $X \sleq X'$ up to time $T$.


The next lemma gives a sufficient condition for stochastic domination of
one tableau process by another, in terms of their rates.

\begin{lemma}
  \label{stoch-dom-criterion}
  Let $X_1$ and $X_2$ be two tableau processes on the diagrams $T(c_1, n_1)$ and $T(c_2, n_2)$ respectively. Let $\scrS$ be some
  subset of the state space of $X$, and let the stopping time $T$ be the
  first time $t$ that $X_1(t) \notin \scrS$.  Suppose that for any
  $A_1 \in \scrS$, any state $A_2$ with $A_1 \subset A_2$, and for any lattice
  point $z$ we have
  \begin{equation}
    \label{rate-leq}
    v_{X_1}(z, A_1) \leq v_{X_2}(z, A_2),
  \end{equation}
  provided both are non-zero.  Then $X_1 \sleq X_2$ up to time $T$.
\end{lemma}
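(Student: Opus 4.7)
The plan is to construct an explicit coupling of $X_1$ and $X_2$ on a common probability space under which $X_1(t) \subseteq X_2(t)$ for all $t \leq T$ almost surely; Strassen's theorem then converts this into the stated stochastic domination $X_1 \sleq X_2$ up to time $T$. The coupled process will be a continuous-time Markov chain on pairs $(A_1, A_2)$ of downward closed subsets satisfying $A_1 \subseteq A_2$, started from $(\emptyset, \emptyset)$, with jumps only of inclusion type (adding one square to $X_1$, or to $X_2$, or to both).

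The coupling is built by the standard splitting trick for exponential clocks. At state $(A_1, A_2)$ and for each square $z \in \HH$, I would prescribe the following rates. When $z \notin A_2$, a \emph{joint} jump adds $z$ to both processes at rate $v_{X_1}(z, A_1)$, together with an \emph{$X_2$-only} jump adding $z$ to $X_2$ at the extra rate $v_{X_2}(z, A_2) - v_{X_1}(z, A_1)$. When $z \in A_2 \setminus A_1$, an \emph{$X_1$-only} jump adds $z$ to $X_1$ at rate $v_{X_1}(z, A_1)$. Reading off the marginals, the total rate at which $X_1$ adds $z$ from state $A_1$ is $v_{X_1}(z, A_1)$ (the joint plus the $X_1$-only contributions), and the total rate at which $X_2$ adds $z$ from state $A_2$ is $v_{X_2}(z, A_2)$ (the joint plus the $X_2$-only contributions). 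Thus each coordinate evolves with its correct marginal law. The inclusion $X_1 \subseteq X_2$ is preserved by construction, since any addition to $X_1$ either is matched by the same addition to $X_2$ (in the joint case) or concerns a square already in $A_2$.

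Two soundness checks remain. First, the extra rate $v_{X_2}(z, A_2) - v_{X_1}(z, A_1)$ in the $X_2$-only jump must be nonnegative; this is precisely the hypothesis of the lemma, which applies for $t \leq T$ since then $X_1(t) \in \scrS$ and $A_1 \subseteq A_2$. Second, whenever the joint jump fires with $z \notin A_2$, the square $z$ must in fact be a legal corner of $T(c_2, n_2) \setminus A_2$, i.e.\ $v_{X_2}(z, A_2) > 0$, so that adding $z$ is an allowed transition of the marginal $X_2$; this is what the phrase ``provided both are non-zero'' in the hypothesis is meant to cover (whenever $v_{X_1}(z, A_1) > 0$ and the jump is relevant to $X_2$, one has $v_{X_2}(z, A_2) > 0$ as well). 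The main technical point is this second check, since without it the splitting would not describe a legitimate Markov process on pairs; once it is in place, running the coupled chain up to time $T$ produces the desired coupling and completes the proof.
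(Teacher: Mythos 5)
Your proposal is correct and is essentially the paper's own argument: both construct a coupled Markov chain on ordered pairs $(A_1,A_2)$ by splitting the $X_2$-clock for each square into a joint jump at rate $v_{X_1}(z,A_1)$ plus an $X_2$-only jump at the nonnegative excess $v_{X_2}(z,A_2)-v_{X_1}(z,A_1)$, with $X_1$-only jumps for squares already in $A_2$, and then verify the marginals. The one small remark is that your second soundness check (that $v_{X_1}(z,A_1)>0$ and $z\notin A_2$ force $v_{X_2}(z,A_2)>0$) is not literally a consequence of the ``provided both are non-zero'' clause but rather of $A_1\subseteq A_2$ together with the downward-closedness and nesting of the diagrams; this is immaterial to the validity of the argument.
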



\begin{proof}
  Suppose first that $\scrS$ is the entire state space of $X_1$. The proof when $\scrS$ is not the whole state space goes through in the same way. 
  
  We define a Markov process $Y$ whose state space is all pairs
  $(A_1, A_2)$ with $A_1 \subset A_2$ such that $Y$
  has marginals $X_1$ and $X_2$.  We define the transitions rates of $Y$
  out of a state $(A_1, A_2)$ as follows.  Let $C_1$ be the set of all
  corners of $T(c_1, n_1)\setminus A_1$, $C_2$ be the set of all corners
  of $T(c_2, n_2)\setminus A_2$, and $C_1'$ be the set of all corners
  belonging to both $T(c_1, n_1)\setminus A_1$ and
  $T(c_2, n_2)\setminus A_1$. 
  
  For $z \in C_1'$, $Y$ transitions to state
  $(A_1 \cup \{z\}, A_2 \cup \{z\})$ with rate $v_{X_1}(z, A_1)$. $Y$
  also transitions to state $(A_1, A_2 \cup \{z\})$ with rate
  $v_{X_2}(z, A_2) - v_{X_1}(z, A_1)$. For $z \in C_2 \smin C_1'$, $Y$
  transitions to state $(A_1 , A_2 \cup \{z\})$ with rate
  $v_{X_2}(z, A_1) $.  For $z \in C_1 \smin C_1'$, $Y$ transitions to
  state $(A_1 \cup \{z\}, A_2)$ with rate $v_{X_1}(z, A_1) $.

  It is easy to check that $Y$ has the correct marginals and provides a
  coupling of $X_1$ and $X_2$ with $X_1 \leq X_2$.
\end{proof}

We can further simplify which rates we need to compare to prove stochastic domination with the following observation.

\begin{lemma}
\label{stoch-dom-1}
Suppose $X(t) = X(c, n, r)(t)$ is a tableau process, and  $A_1 \subset A_2$. Then $v(z, A_1) \leq v(z, A_2)$ for any point $z \notin A_2$.
\end{lemma}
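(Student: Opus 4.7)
The plan is to reduce to the single-step case $A_2 = A_1 \cup \{w\}$ by induction and then directly compare hook probabilities via the explicit product formula preceding \cref{stoch-dom-criterion}.

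First, I would dispose of the trivial cases. If $z \notin T(c, n)$, both rates vanish. If $z$ is not a corner of $B := T(c,n) \setminus A_1$, then $v(z, A_1) = 0$ and the inequality is immediate. Otherwise $z$ is a corner of $B$; since $B' := T(c, n) \setminus A_2 \subseteq B$ and $z \in B'$, the identity $H_z^{B'} = H_z^B \cap B' = \{z\}$ shows that $z$ is also a corner of $B'$, so both rates are positive and the hook-probability formula applies on both sides.

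For the reduction, since $A_2 \setminus A_1$ is a finite subposet of $\HH$, it admits a linear extension $w_1, \ldots, w_k$ yielding a chain $A_1 = B_0 \subset B_1 \subset \cdots \subset B_k = A_2$ of downward-closed sets with $|B_i \setminus B_{i-1}| = 1$. Iterating the single-step inequality $v(z, B_{i-1}) \leq v(z, B_i)$ gives the full claim. It therefore suffices to prove $\hookp(B \setminus \{w\}, z) \geq \hookp(B, z)$ whenever $w$ is a corner of $B$ and $z \neq w$.

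The core computation rests on two observations. Since $w$ is a corner of $B$, $H_w^B = \{w\}$, so $w \notin R_z^B$ (otherwise $z \in H_w^B$ would force $z = w$); moreover, for any $y \in B \setminus \{w, z\}$, $z \in H_y^B \iff z \in H_y^{B'}$ because $z \neq w$. Hence $R_z^{B'} = R_z^B$. In addition, for each $y \in R_z^B$ one has $h_y^{B'} = h_y^B - \indic\{w \in H_y^B\}$. Substituting into the product formula yields
\[
\frac{\hookp(B', z)}{\hookp(B, z)} \;=\; \frac{|B|}{|B| - 1} \prod_{y \in R_z^B} \frac{h_y^{B'}\,(h_y^B - 1)}{h_y^B\,(h_y^{B'} - 1)},
\]
and each factor in the product equals either $1$ (when $w \notin H_y^B$) or $(h_y^B - 1)^2/(h_y^B(h_y^B - 2))$ (when $w \in H_y^B$). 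The one subtle point to verify is that in the latter case $h_y^B \geq 3$, which holds because $y$, $z$, and $w$ are then three distinct elements of $H_y^B$; this makes the denominator positive and each such factor strictly greater than $1$. Combined with $|B|/(|B| - 1) > 1$, this yields the desired inequality. The main obstacle, such as it is, is this cardinality bound $h_y^B \geq 3$, without which the algebraic manipulation of hook lengths could produce a spurious division by zero or sign flip.
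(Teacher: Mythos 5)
Your proof is correct and rests on the same mechanism as the paper's: the reverse hook $R_z$ is unchanged when passing from $A_1$ to $A_2$, while the hook lengths $h_y$ and the residual cardinality can only decrease, so every factor in the hook-probability formula moves in the right direction. The single-square telescoping, the exact ratio computation, and the $h_y^B \ge 3$ check it necessitates are all avoidable: since $h \mapsto 1 + 1/(h-1)$ is decreasing and $h_y^{2} \le h_y^{1}$, the paper compares $A_1$ and $A_2$ directly in one step.
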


\begin{proof}
The only interesting case here is when $z$ is a corner for both $T(c, n) \setminus A_1$ and $T(c, n) \setminus A_2$. Then by Equation \eqref{rate-def} and the hook probability formula,

\begin{equation*}
v(z, A_i)= \frac{r}{\card{T(c, n) \setminus A_i}}\prod_{y \in R^i_z} \left(1 + \frac{1}{h^i_y -1} \right).
\end{equation*}

Here $R^i_z$ refers to the reverse hook for $z$ in the diagram $T(c, n) \setminus A_i$, and $h^i_y$ refers to the cardinality of the hook for $y$ in the same diagram. We have that $R^1_z = R^2_z$, and each of these are simply the reverse hook for $z$ in $T(c, n)$. Also, $h^2_y \leq h^1_y$ for all $y$ since $T(c, n) \setminus A_2 \sset T(c, n) \setminus A_1$, and $\card{T(c, n) \setminus A_2} \leq \card{T(c, n) \setminus A_1}$. Putting this together, we get that $v(z, A_1) \leq v(z, A_2)$, as desired.
\end{proof}

To prove the more general stochastic domination result, we need the following lemma to help bound hook probabilities.

\begin{lemma}
\label{hook-prob-inequality}
Let $a, b$ be either two integers greater than $1$ or two half-integers greater than $1$, and define
$$
y= \prod_{i=a}^b \left(1 + \frac{1}{2i -1} \right) = \prod_{i=a}^b \frac{2i}{2i -1},
$$
where the product runs over integers between $a$ and $b$ if both are integers, and over half-integers between $a$ and $b$ if both are half-integers.
Then
$$\sqrt{\frac{2b-1}{2a-1}} < y < \sqrt{\frac{2b}{2a-2}}.
$$
\end{lemma}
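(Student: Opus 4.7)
The plan is to square $y$ and bound the resulting product termwise by two telescoping products, one from below and one from above. The key observation is that although the product is indexed by $i$ stepping by $1$, once we replace each factor $\left(\frac{2i}{2i-1}\right)^2$ by a ratio whose numerator and denominator differ by $2$, the resulting product telescopes in both the integer and half-integer cases.

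Specifically, I would start from the two elementary inequalities
\[
(2i-1)(2i+1) = (2i)^2 - 1 < (2i)^2 \qquad \text{and} \qquad (2i)(2i-2) = (2i-1)^2 - 1 < (2i-1)^2,
\]
which are valid for any real $i>1$ and do not rely on $i$ being an integer. Dividing through by $(2i-1)^2$ in the first and the second gives
\[
\left(\frac{2i}{2i-1}\right)^2 > \frac{2i+1}{2i-1} \qquad \text{and} \qquad \left(\frac{2i}{2i-1}\right)^2 < \frac{2i}{2i-2}.
\]
Taking the product over $i$ from $a$ to $b$ (stepping by $1$, so $2i$ steps by $2$), both right-hand side products telescope: in $\prod_{i=a}^{b} \frac{2i+1}{2i-1}$, the factor $2i+1$ in the $i$-th term cancels with the factor $2(i+1)-1 = 2i+1$ in the denominator of the next term, leaving $\frac{2b+1}{2a-1}$; similarly $\prod_{i=a}^{b} \frac{2i}{2i-2}$ collapses to $\frac{2b}{2a-2}$.

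Combining, I obtain
\[
\frac{2b+1}{2a-1} < y^2 < \frac{2b}{2a-2},
\]
and since $\frac{2b+1}{2a-1} > \frac{2b-1}{2a-1}$, taking square roots gives the claimed double inequality. The argument goes through verbatim whether $a,b$ are both integers or both half-integers, because the cancellation only uses the fact that consecutive indices differ by $1$ and that $2a-2 > 0$, which is guaranteed by the assumption $a > 1$.

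There is no real obstacle here beyond spotting the correct elementary bounds; the mild point to check is just that the telescoping works identically in the half-integer case, which it does since the step size in the variable $2i$ is $2$ in both regimes.
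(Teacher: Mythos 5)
Your proof is correct and is essentially the paper's argument in different packaging: the paper introduces the auxiliary products $x=\prod\frac{2i+1}{2i}$ and $z=\prod\frac{2i-1}{2i-2}$ with $x<y<z$ and telescopes $xy$ and $yz$, which amounts to exactly your termwise bounds on $\left(\frac{2i}{2i-1}\right)^2$. (Your lower bound $\frac{2b+1}{2a-1}$ is in fact the correct telescoped value and is slightly sharper than the stated $\frac{2b-1}{2a-1}$, so the claim follows as you say.)
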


\begin{proof}
We have $x < y < z$ where
$$
x=\prod_{i=a}^b \frac{2i+1}{2i} ,\qquad  z =\prod_{i=a}^b
\frac{2i-1}{2i-2}.
$$
Then $xy$ and $yz$ are telescoping products given by
$$
xy=\frac{2b -1}{2a-1}, \qquad yz=\frac{2b}{2a-2}
$$
so we get $\sqrt{xy}< y <\sqrt{yz}$.
\end{proof}

Now we can prove the following more general lemma about stochastic domination.

\begin{lemma}\label{stoch-dom-general}
Let $T(c_1,n_1) \sset T(c_2, n_2)$, and consider two tableau
processes
\[
  X_1 = X_1(c_1, n_1, n_1), \qquad \text{and} \qquad X_2 =
  X_2(c_2, n_2, \theta n_2).
\]
Fix $\al \in (0,1)$, and let $T$ be the stopping time when
$\lfloor \al \binom{n_1}{2} \rfloor$ lattice points have been added to
$X_1$.  Let the difference between the horizontal centers of the two
tableau processes be $d= |c_1 -c_2|$. Then $X_1 \sleq X_2$ up to time $T$,
provided that

$$
\theta > \frac{(n_2 -1)n_1}{(n_1 - 1)(n_2 - 2)}\left((1- \al)\sqrt{1- \left(\frac{n_1 + d}{n_2-2}\right)^2}\right)^{-1}.
$$
\end{lemma}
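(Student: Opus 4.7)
The plan is to apply Lemma~\ref{stoch-dom-criterion} with $\mathcal{S}$ the set of downward-closed subsets $A \subset T(c_1, n_1)$ satisfying $|A| \leq \lfloor \alpha \binom{n_1}{2} \rfloor$, so that $T$ is the first exit of $X_1$ from $\mathcal{S}$. This reduces the problem to verifying the pointwise rate inequality $v_{X_1}(z, A_1) \leq v_{X_2}(z, A_2)$ whenever $A_1 \in \mathcal{S}$, $A_1 \subset A_2$, and $z$ is a corner of both skew diagrams. By Lemma~\ref{stoch-dom-1} applied to $X_2$, the rate $v_{X_2}(z, A_2)$ is non-decreasing in $A_2$, so it is enough to check the inequality at $A_2 = A_1^* := A_1 \cup \{w \in T(c_2, n_2) : w < z\}$, the minimal enlargement of $A_1$ for which $z$ is a corner of $T(c_2, n_2) \setminus A_2$.

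Using the product form of the hook probability, one writes
\[
\frac{v_{X_1}(z, A_1)}{v_{X_2}(z, A_1^*)} = \frac{n_1}{\theta n_2} \cdot \frac{|T(c_2, n_2) \setminus A_1^*|}{|T(c_1, n_1) \setminus A_1|} \cdot \frac{\prod_{y \in R^1_z} h^1_y/(h^1_y - 1)}{\prod_{y \in R^2_z} h^2_y/(h^2_y - 1)},
\]
where $h^i_y$ and $R^i_z$ refer to the skew diagrams. I bound the size ratio by $n_2(n_2-1)/\bigl((1-\alpha) n_1(n_1-1)\bigr)$ using $A_1 \in \mathcal{S}$ and $|T(c_2, n_2) \setminus A_1^*| \leq \binom{n_2}{2}$. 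Since $R^1_z \subset R^2_z$, the denominator product splits into a shared piece over $R^1_z$ and an extra piece over the cells of $R^2_z \setminus R^1_z$, all of which lie in $T(c_2, n_2) \setminus T(c_1, n_1)$. To lower-bound the denominator I replace each skew hook length $h^2_y$ by the full-diagram hook length $h^{T(c_2, n_2)}_y = 2y_y - 1$, which only decreases each factor $h/(h-1)$, and then apply Lemma~\ref{hook-prob-inequality} separately to the two diagonal rays of $R^2_z$ emanating from $z$.

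The final step is a geometric computation. Writing $R_i, L_i$ for the distances from $z$ to the boundary of $T(c_i, n_i)$ along its two diagonals, one has $R_i + L_i = y_z + n_i - 1$ and $R_i - L_i = c_i - x_z$, and the constraint $z \in T(c_1, n_1)$ forces $|c_2 - x_z| \leq n_1 - 1 - y_z + d$. Minimising the resulting expression over all admissible corners $z$ produces the semicircle factor $\sqrt{1 - ((n_1 + d)/(n_2 - 2))^2}$. The main obstacle is the uniformity in $z$: the worst case occurs at low-$y_z$ corners pushed as far as possible toward the boundary of $T(c_1, n_1)$ away from $c_2$, where the cells of $R^2_z \setminus R^1_z$ are fewest and closest to the edge of $T(c_2, n_2)$. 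The hypothesis $T(c_1, n_1) \subset T(c_2, n_2)$, equivalent to $n_1 + d \leq n_2$, is precisely what ensures the semicircle factor is well-defined and yields the stated threshold on $\theta$.
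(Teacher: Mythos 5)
Your overall strategy is the same as the paper's: reduce via Lemma~\ref{stoch-dom-criterion} and the monotonicity of Lemma~\ref{stoch-dom-1} to a single rate comparison at a common configuration (note your $A_1^*$ is just $A_1$, since every $w<z$ already lies in $T(c_1,n_1)$ when $z$ is a corner of $T(c_1,n_1)\setminus A_1$), cancel the shared factors over $R^1_z$, and telescope the extra factors over $R^2_z\setminus R^1_z$ with Lemma~\ref{hook-prob-inequality}. The gap is in your hook-length estimate for those extra cells. For $y=(z_1\mp i,z_2+i)\in R^2_z\setminus R^1_z$, the relevant hook length in the skew diagram $T(c_2,n_2)\setminus A$ is \emph{not} well approximated from above by the full-diagram value $2y_2-1$: one leg of $H_y$ passes through $z$ and is truncated there (all cells strictly below $z$ are in $A$), so the skew hook length is $1+(y_2-z_2)+(y_2-1)=2y_2-z_2$, which is smaller than $2y_2-1$ by $z_2-1$. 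Replacing it by $2y_2-1$ is a valid inequality in the right direction, but it is lossy whenever $z_2>1$, and the loss does not vanish: telescoping your factors $\tfrac{2y_2-1}{2y_2-2}$ gives a lower bound of order $\sqrt{\tfrac{(n_2-3+z_2)^2-z_1^2}{(n_1-1+z_2)^2-(z_1-d)^2}}$, whereas the correct hook lengths give $\sqrt{\tfrac{(n_2-2)^2-z_1^2}{n_1^2-(z_1-d)^2}}$, in which the $z_2$-dependence cancels entirely.

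This is not a cosmetic constant. Corners with $z_2$ up to roughly $\sqrt{\alpha}\,n_1$ are admissible before the stopping time $T$ (one needs $|A|\geq\binom{z_2+1}{2}-1$ to expose such a corner, which is compatible with $|A|\leq\lfloor\alpha\binom{n_1}{2}\rfloor$), and at such corners your bound forces a threshold roughly $(1+\sqrt{\alpha})$ times the stated one. For instance, with $d=0$, $n_1=100$, $n_2=10^4$, $\alpha=1/2$, a corner at height $z_2=70$ requires $\theta>3.3$ under your estimate, while the lemma asserts $\theta>2.03$ suffices. So your argument proves a correct domination statement but with a strictly larger threshold, and your closing claim that minimizing over corners ``produces the semicircle factor'' and ``yields the stated threshold'' does not follow. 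The fix is exactly to use the truncated skew hook length $2y_2-z_2$ for the cells of $R^2_z\setminus R^1_z$; then Lemma~\ref{hook-prob-inequality} (integer-indexed after shifting by $z_2/2$) gives the $z_2$-free bound, the worst case is at bottom-row corners with $|z_1|$ extremal, and the stated threshold follows.
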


\begin{proof}
We may assume that $c_1 = d$ and $c_2 = 0$. By Lemmas \ref{stoch-dom-criterion} and \ref{stoch-dom-1} it suffices to show that for any state $A$ with $\card{A} \leq \lfloor \al {n_1 \choose 2} \rfloor$, and any corner $z$ of both $T(d, n_1) \setminus A$ and $T(0, n_2) \setminus A$ we have  
$$
v_{X_1}(z, A) \leq v_{X_2}(z, A).
$$
Let $R_z^1$ be the reverse hook of $z$ in $T(d, n_1)$, and let $h_y^1$ be the hook length of $y$ in $T(d, n_1)\setminus A$, and similarly define $R_z^2$ and $h_y^2$ for $T(0, n_2)$.
To get a simple expression for $\frac{v_{X_2}(z, A)}{v_{X_1}(z, A)}$, observe that if $y \in R_z^1$, then $y \in R_z^2$ and $h_y^1 = h^2_y$ for such $y$.  Thus

\begin{equation}
\label{eqn-1-stoch-dom}
\frac{v_{X_2}(z, A)}{v_{X_1}(z, A)} = \theta \frac{n_2}{n_1} \frac{\card{T(d, n_1) \setminus A}}{{\card{T(0, n_2) \setminus A}}} \prod_{y \in R_z^2 \setminus R_z^1}\left(1 + \frac{1}{h_y^2 -1} \right).
\end{equation}

We will show that this is always greater than 1. For $y = (y_1, y_2)$ to be in $R_z^2 \setminus R_z^1$ where $z =(z_1, z_2)$, one of two possibilities must occur. Either
\begin{align*}
(y_1, y_2) \in E_1 &= \{ (z_1- i , z_2 + i) : z_2 - z_1 + 2i \in (n_1 - 1 - d, \;n_2 -1]\}, \mathor \\
(y_1, y_2) \in E_2 &= \{ (z_1+ i , z_2 + i) : z_1 + z_2 + 2i \in (n_1 - 1+ d,\; n_2 -1]\}.
\end{align*}
For $(y_1, y_2) = (z_1- i , z_2 + i) \in E_1$ and for $y =(z_1+ i , z_2 + i)\in E_2$, the hook for $y$ is of length $1 + (y_2 -z_2) + (y_2 - 1) = 2i + z_2$. Thus using Lemma \ref{hook-prob-inequality}, we find that

\begin{align*}
 \prod_{y \in R_z^2 \setminus R_z^1}\left(1 + \frac{1}{h_y^2 -1} \right) &= \\
 \prod_{i = [n_1 - 1 - d - z_2 + z_1]/2 + 1}^{[n_2 -1 - z_2 + z_1]/2}&\left(1 + \frac{1}{2i + z_2 - 1}\right)\prod_{i = [n_1 - 1 + d - z_2 - z_1]/2 + 1}^{[n_2 - 1 - z_2 - z_1)]/2}\left(1 + \frac{1}{2i + z_2 - 1}\right) \\
& >  \sqrt{\frac{(n_2 -2)^2 - z_1^2}{n_1^2 - (z_1 - d)^2}}.
\end{align*}
Thus for the quantity \eqref{eqn-1-stoch-dom} to be greater than 1, we need

\begin{equation}
\label{eqn-stoch-dom-2}
\theta  \ge \frac{n_1}{n_2} \frac{\card{T(0, n_2) \setminus A}}{{\card{T(d, n_1) \setminus A}}}\sqrt{\frac{n_1^2 - (z_1 - d)^2}{(n_2 -2)^2 - z_1^2}},
\end{equation}
for all values of $z_1$ and $A$ with $\card{A} \leq \lfloor \al {n_1 \choose 2} \rfloor$. We then have the following chain of inequalities for the right hand side of \eqref{eqn-stoch-dom-2}, which show that the inequality \eqref{eqn-stoch-dom-2} holds for the values of $\theta$ specified in the Lemma.

\begin{align*}
\frac{n_1}{n_2} \frac{\card{T(0, n_2) \setminus A}}{{\card{T(d, n_1) \setminus A}}}\sqrt{\frac{n_1^2 - (z_1 - d)^2}{(n_2 -2)^2 - z_1^2}} &< \frac{n_1}{n_2} \frac{{n_2 \choose 2}}{(1-\al){n_1 \choose 2}}\frac{n_1}{n_2 -2}\left(1 - \left(\frac{z_1}{n_2-2}\right)^2\right)^{-1/2} \\
&\leq \frac{(n_2 -1)n_1}{(n_1 - 1)(n_2 -2)}\left((1- \al)\sqrt{1- \left(\frac{n_1 + d}{n_2-2}\right)^2}\right)^{-1}
\qedhere
\end{align*}

\end{proof}

We will use this lemma when $n_1$ is much smaller than $n_2$, the value $\al$ small, and the distance $d$ grows linearly with $n_2$. In this case we have the following asymptotic version of the stochastic domination.

\begin{corollary}
\label{monotone-cor}
Let $X\left(un + a_n, n, \frac{n}{\sqrt{1 - u^2}}\right)(t)$ be a sequence of tableau processes for $n \in \nat$, where $u \in (-1, 1)$ and $a_n=o(n)$ is a sequence of integers. Then for any $\ep_1 > 2\ep_2 \in (0,1)$, for all sufficiently large $m$ there exists some $N(m)$ such that
$$
X\left(un + a_n, n, \frac{(1+ \ep_1)n}{\sqrt{1 - u^2}}\right) \sgeq X(0, m, m)
$$
up to time $T$, for all values of $n \geq N(m)$.
Here $T$ is the stopping time when $\ep_2 {m \choose 2}$ lattice points have been added to the process $X(0, m, m)$.
\end{corollary}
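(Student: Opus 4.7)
The corollary is essentially an asymptotic packaging of Lemma \ref{stoch-dom-general}, so my plan is to verify that the hypotheses of that lemma are satisfied with the indicated parameters, then take the appropriate limit.

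First, I would identify the parameters. Setting $X_1 = X(0, m, m)$ and $X_2 = X\bigl(un + a_n, n, \tfrac{(1+\ep_1)n}{\sqrt{1-u^2}}\bigr)$, we have $n_1 = m$, $c_1 = 0$, $n_2 = n$, $c_2 = un + a_n$, so the distance between centers is $d = |un + a_n|$, and the rate multiplier in $X_2$ corresponds to $\theta = (1+\ep_1)/\sqrt{1-u^2}$. The stopping time $T$ in the corollary matches the stopping time in the lemma with $\al = \ep_2$. Before invoking the lemma one must check the containment $T(0,m) \sset T(un + a_n, n)$; since $a_n = o(n)$ and $|u| < 1$, we have $|un+a_n| \leq n - m$ for all $n$ beyond some threshold depending on $m$, which is exactly the condition ensuring $T(0,m)$ fits inside $T(un+a_n,n)$.

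Next, I would check the quantitative condition on $\theta$ required by Lemma \ref{stoch-dom-general}, namely
\[
  \theta > \frac{(n-1)m}{(m-1)(n-2)} \left((1-\ep_2)\sqrt{1 - \left(\frac{m + |un+a_n|}{n-2}\right)^{2}}\right)^{-1}.
\]
Since $m$ is fixed while $n \to \infty$ and $a_n/n \to 0$, the right-hand side converges to
\[
  \frac{m}{m-1} \cdot \frac{1}{(1-\ep_2)\sqrt{1-u^2}}.
\]
So it suffices to show that $(1+\ep_1)/\sqrt{1-u^2}$ strictly exceeds this limiting value for all sufficiently large $m$, after which standard convergence gives an $N(m)$ such that the inequality holds for all $n \geq N(m)$.

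The remaining task is to verify that the hypothesis $\ep_1 > 2\ep_2$ makes the asymptotic inequality hold with room to spare. Pulling out the common factor $1/\sqrt{1-u^2}$, we need $(1+\ep_1)(1-\ep_2) > m/(m-1)$ for $m$ large. Expanding, $(1+\ep_1)(1-\ep_2) - 1 = \ep_1 - \ep_2 - \ep_1\ep_2$, and using $\ep_1 > 2\ep_2$ together with $\ep_1 < 1$ this is at least $\ep_2 - \ep_1\ep_2 = \ep_2(1-\ep_1) > 0$. So the asymptotic inequality is strict by a fixed positive amount, and then choosing $m$ large enough that $m/(m-1) < 1 + \ep_2(1-\ep_1)$ completes the argument.

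There is no real obstacle here; the only mildly delicate point is bookkeeping of the quantifier order (fix $\ep_1, \ep_2$, then choose $m$, then choose $N(m)$), but this is automatic once one observes that the limiting RHS depends only on $u$, $\ep_2$, and $m$, and the hypothesis $\ep_1 > 2\ep_2$ provides uniform slack independent of $m$ in the limit as $m \to \infty$.
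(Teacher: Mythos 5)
Your proposal is correct and is exactly the argument the paper intends: the corollary is stated as an immediate asymptotic consequence of Lemma \ref{stoch-dom-general} (the paper gives no separate proof), obtained by substituting $n_1=m$, $n_2=n$, $\al=\ep_2$, $d=|un+a_n|$, $\theta=(1+\ep_1)/\sqrt{1-u^2}$ and letting $n\to\infty$ with $m$ fixed. Your verification of the containment $T(0,m)\sset T(un+a_n,n)$, the limiting inequality $(1+\ep_1)(1-\ep_2)>m/(m-1)$, and the role of $\ep_1>2\ep_2$ in providing uniform slack matches what the authors leave implicit.
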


Finally, we will also state Lemma \ref{stoch-dom-general} for domination of a tableau process over two independently coupled tableau processes, as this will be necessary for the proof that the tableau limit is mixing. The proof goes through analogously.

\begin{lemma}
\label{stoch-dom-indep}
Let $T(b_1, n_1)$ and $T(c_1, n_1)$ be disjoint sets with $T(b_1, n_1) \cup T(c_1, n_1) \sset T(c_2, n_2)$, and consider three tableau processes 
\[
  X_1(t) = X_1(c_1, n_1, n_1), \;\;  X'_1(t) = X_1(b_1, n_1, n_1)\qquad \text{and} \qquad X_2(t) =
  X_2(c_2, n_2, \theta n_2).
\]

Let $Y$ be the process given by the union of independent copies of $X_1$ and $X_1'$. Let $d = \max (|c_2 - c_1|, |c_2 -b_1|)$. Then if $\al$ and $\theta$ are as in the statement of Lemma \ref{stoch-dom-general} (with the new definition for $d$), we have that $Y \sleq X_2$ up to a stopping time $T$. In this case $T$ is the stopping when either $\lfloor \al {n_1 \choose 2} \rfloor$ lattice points have been added to $X_1$ or $X_1'$.
  \end{lemma}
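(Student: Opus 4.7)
The plan is to mirror the proofs of Lemmas \ref{stoch-dom-criterion} and \ref{stoch-dom-general}, with the one new feature being that $Y$ is not itself a tableau process but rather a union of two independent tableau processes on disjoint diagrams. The crucial observation is that, because $T(c_1,n_1)$ and $T(b_1,n_1)$ are disjoint, the rate at which $Y$ inserts any given lattice point $z$ depends only on the sub-process containing $z$. Writing a state of $Y$ as $A_Y = A_1 \cup A_1'$ with $A_1 = A_Y \cap T(c_1,n_1)$ and $A_1' = A_Y \cap T(b_1,n_1)$, one has $v_Y(z,A_Y) = v_{X_1}(z,A_1)$ for $z \in T(c_1,n_1)$, and $v_Y(z,A_Y) = v_{X_1'}(z,A_1')$ for $z \in T(b_1,n_1)$.

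Given this, the first step is to build a Markovian coupling of $Y$ and $X_2$ on the state space of pairs $(A_Y, A_2)$ with $A_Y \subseteq A_2$, in direct analogy with the coupling $(A_1,A_2)$ in the proof of Lemma \ref{stoch-dom-criterion}. The rate recipe is identical: for each corner $z$ common to $T(c_2,n_2) \setminus A_2$ and whichever sub-diagram of $Y$ contains it, the coupling adds $z$ simultaneously to $A_Y$ and $A_2$ at rate $v_Y(z,A_Y)$, and adds $z$ to $A_2$ alone at rate $v_{X_2}(z,A_2) - v_Y(z,A_Y)$; the remaining book-keeping for corners of one diagram but not the other is handled as in the proof of Lemma \ref{stoch-dom-criterion}. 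The marginal laws are easily verified, so the only substantive content is the pointwise inequality $v_Y(z,A_Y) \leq v_{X_2}(z,A_2)$.

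The second step is this rate comparison. Take for concreteness $z \in T(c_1,n_1)$; the case $z \in T(b_1,n_1)$ is identical. By Lemma \ref{stoch-dom-1} applied to the single tableau process $X_2$ and the inclusion $A_1 \subseteq A_Y \subseteq A_2$, we have $v_{X_2}(z,A_2) \geq v_{X_2}(z,A_1)$, so it suffices to prove
\[
v_{X_1}(z,A_1) \leq v_{X_2}(z,A_1).
\]
Now the stopping time $T$ guarantees $|A_1| \leq \lfloor \alpha \binom{n_1}{2} \rfloor$ throughout, which is exactly the hypothesis of Lemma \ref{stoch-dom-general}. That lemma, applied with horizontal offset $|c_2 - c_1|$, yields the required inequality whenever
\[
\theta > \frac{(n_2-1)n_1}{(n_1-1)(n_2-2)} \Bigl((1-\alpha)\sqrt{1 - ((n_1+|c_2-c_1|)/(n_2-2))^2}\,\Bigr)^{-1},
\]
and similarly for $z \in T(b_1,n_1)$ with $|c_2 - b_1|$ in place of $|c_2 - c_1|$. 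Our choice $d = \max(|c_2-c_1|, |c_2-b_1|)$ makes the hypothesis on $\theta$ in Lemma \ref{stoch-dom-general} sufficient to cover both cases simultaneously.

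The main place where care is needed is simply to see that the combinatorial identity underlying Lemma \ref{stoch-dom-general} survives this generalization: because the reverse hook of $z$ in $T(c_1,n_1)$ only sees lattice points of $T(c_1,n_1)$ and is disjoint from $T(b_1,n_1)$, the ratio $v_{X_2}(z,A_1)/v_{X_1}(z,A_1)$ computed inside the proof of Lemma \ref{stoch-dom-general} is unchanged by the presence of the second sub-diagram. Once this is observed, no new estimates are required and the result follows.
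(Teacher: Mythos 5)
Your proposal is correct and follows essentially the same route the paper intends: the paper gives no separate argument for Lemma \ref{stoch-dom-indep} beyond asserting that the proof of Lemma \ref{stoch-dom-general} "goes through analogously," and your write-up is exactly that analogous proof, with the key observations (disjointness makes the rates of $Y$ factor through the relevant sub-process, Lemma \ref{stoch-dom-1} reduces the comparison to a common state, and taking $d$ as the maximum offset covers both sub-diagrams) correctly identified. No gaps.
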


\begin{section}{Inclusion Functions and Convergence}
\label{S:convergence}
%
%
%
%

We want to show that for a sequence of tableau processes $X_n (t) = X_n (0, n, n)(t)$, that the corresponding inclusion functions converge in the weak topology on the space of probability measures on $[0, \infty] ^ \half$. To do this, we use the monotonicity established by Corollary \ref{monotone-cor}, which will be exploited using the following lemmas.

%
\begin{lemma}
\label{domination-implies-convergence}
Let $G_n$ be a tight sequence of random variables taking values in $[0, \infty)^m$. Suppose that for every $\ep > 0$, there exists a sequence of random variables $G^\ep_n$ such that $\prob(G^\ep_n \neq G_n) \to 0$ as $n \to \infty$ and such that the following holds. For all sufficiently large $M$ there is some $N \in \nat$ such that
$$G_n \sleq (1+ \ep)G^\ep_M\qquad \text{ for all } n \geq N.$$
Then the sequence $G_n$ has a distributional limit $G$.
\end{lemma}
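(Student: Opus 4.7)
The plan is to use tightness to extract subsequential weak limits, and then use the approximate-domination hypothesis to force all such limits to coincide. By Prokhorov's theorem, tightness of $(G_n)$ guarantees subsequential distributional limits, so it suffices to show that if $G_{n_k} \cvgd G$ and $G_{m_k} \cvgd G'$ along two such sequences, then $G \eqd G'$.

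For fixed $\ep > 0$, the first observation is that $(G^\ep_n)$ shares subsequential limits with $(G_n)$: since $\prob(G^\ep_n \neq G_n) \to 0$, the difference tends to $0$ in probability and Slutsky's theorem gives $G^\ep_{m_k} \cvgd G'$. Now apply the hypothesis with $M = m_k$ for $k$ large enough that $m_k$ exceeds the threshold in the hypothesis. There is some $N(m_k, \ep)$ so that
$$G_n \sleq (1+\ep)\, G^\ep_{m_k} \qquad \text{for all } n \geq N(m_k, \ep).$$
Since $n_j \to \infty$, choose indices $j(k) \to \infty$ with $n_{j(k)} \geq N(m_k,\ep)$. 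As a sub-subsequence of $(G_{n_k})$, the left side satisfies $G_{n_{j(k)}} \cvgd G$, while the right side satisfies $(1+\ep) G^\ep_{m_k} \cvgd (1+\ep) G'$. Stochastic domination in $[0,\infty)^m$ is characterized by $\expt f(X) \leq \expt f(Y)$ for every bounded continuous coordinatewise-increasing $f$, and since each such test integral is preserved under weak limits, we obtain $G \sleq (1+\ep) G'$.

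By the symmetric argument, $G' \sleq (1+\ep) G$. Letting $\ep \to 0$ and applying bounded convergence to the test integrals $\expt f(\cdot)$ for continuous increasing $f$ yields $G \sleq G'$ and $G' \sleq G$. Mutual stochastic domination forces equality of $\expt f(G)$ and $\expt f(G')$ on the class of bounded continuous monotone $f$, which determines the law on $[0,\infty)^m$ (the survival functions $a \mapsto \prob(X \geq a)$ recover the joint distribution by inclusion-exclusion on boxes). Thus $G \eqd G'$, every subsequential limit is the same, and $(G_n)$ converges in distribution. The main technical step to handle carefully is the double limit in $k$: one must choose $j(k)$ after $m_k$ and verify that stochastic domination truly survives the weak limit, which is where the characterization of $\sleq$ via monotone test functions is essential; the $\ep \to 0$ passage is then routine.
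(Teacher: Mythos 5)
Your proof is correct. The skeleton is the same as the paper's -- tightness yields subsequential limits, and the approximate domination hypothesis is used to show any two of them coincide -- but the uniqueness step is executed differently. The paper argues by contradiction on a single box $\prod_k[0,a_k]$ on which two putative limits $G^1,G^2$ would disagree: it chooses $\delta>0$ so that the open box at radius $a_k+\delta$ still separates the two laws from the closed box at radius $a_k+2\delta$, ties $\ep=\delta/(\max_k a_k+\delta)$ to that box so that multiplication by $(1+\ep)$ maps one box into the other, and chains portmanteau inequalities to a contradiction; it therefore only ever uses the trivial direction of stochastic domination (ordering of box probabilities). You instead keep $\ep$ free, push the relation $G_{n_{j(k)}}\sleq(1+\ep)G^\ep_{m_k}$ through the weak limit via the bounded-continuous-increasing test-function characterization of $\sleq$ (which the paper has already licensed by citing Strassen), and only send $\ep\to 0$ at the level of the limit laws, recovering the distribution from survival functions at the end. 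Your route is cleaner and dispenses with the $\delta$-bookkeeping and the ``without loss of generality'' about which limit assigns more mass to the box, at the cost of invoking the abstract equivalence between couplings and monotone test integrals; and you correctly handle the one delicate point common to both arguments, namely that the hypothesis compares $G_n$ for large $n$ against a \emph{fixed} earlier index $M$, so the two indices along the two subsequences must be sent to infinity in the right order.
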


We leave the proof of this lemma for the appendix (Section \ref{S:appendix}), as it is fairly standard but somewhat lengthy.

\begin{lemma}
\label{point-tight}
Let $X_n = X_n(a_n, n, n)$ be a sequence of tableau processes  with $a_n =o(n)$ and let $F_n$ be the corresponding sequence of inclusion functions. Then for any $z \in \half$,
$$\{F_n(z) : n \text{  large enough so that } z \in T(0, n)\}$$ is a tight as a sequence taking values in $[0, \infty)$.
\end{lemma}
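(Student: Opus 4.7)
The plan is to prove by induction on the $y$-coordinate of $z = (x,y) \in \half$ that the sequence $F_n(z)$ is tight. Two ingredients drive the argument. First, the point $z$ becomes a corner of the current diagram $T(a_n,n) \smin X_n(t)$ exactly when all of its strict predecessors $D_z = \{z' \in \half : z' < z\}$ have been added; by the monotonicity of $F$, this occurs at the random time $\max\{F_n(x-1,y-1), F_n(x+1,y-1)\}$ (with the convention that $F_n$ at a predecessor outside $\half$ is $0$). Second, by Lemma \ref{stoch-dom-1} the rate at which $z$ is added is non-decreasing in the current set $X_n(t)$, so once $z$ is a corner the rate is bounded below by $v_{X_n}(z, D_z) = n \cdot \hookp(T(a_n,n) \smin D_z, z)$. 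Consequently the residual time for $z$ to be added after becoming a corner is stochastically dominated by an exponential random variable with rate $v_{X_n}(z, D_z)$, and it suffices to show that this rate is bounded below by a positive constant uniformly in large $n$.

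For the base case $y = 1$, there are no strict predecessors in $\half$, so $z = (x, 1)$ is a corner of $T(a_n,n)$ from the outset, and it suffices to lower-bound $v_{X_n}(z, \emptyset)$. The reverse hook $R_z$ consists of two up-diagonals whose lengths are both of order $n$ when $a_n = o(n)$ and $x$ is fixed, with hook lengths $3, 5, 7, \ldots$ along each. Applying the estimates of Lemma \ref{hook-prob-inequality} shows that each diagonal contributes a factor of order $\sqrt{n}$ to the hook product, so the full product is of order $n$; dividing by $|T(a_n,n)| = \binom{n}{2}$ and multiplying by the total rate $n$ yields $v_{X_n}(z, \emptyset)$ of order $1$. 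For the inductive step with $y \geq 2$, the induction hypothesis gives tightness of $F_n(x \pm 1, y - 1)$ and hence of the becoming-a-corner time. The same style of rate computation applies in the modified diagram $T(a_n,n) \smin D_z$: since $D_z$ lies strictly below $z$, the reverse hook $R_z$ in $T(a_n,n) \smin D_z$ is identical to that in $T(a_n,n)$, while each hook length $h_{y'}$ for $y' = (x \pm k, y+k) \in R_z$ loses exactly the $y-1$ squares of $D_z$ in its hook, becoming $y + 2k$. Applying Lemma \ref{hook-prob-inequality} to $\prod_k \frac{y + 2k}{y + 2k - 1}$ along both diagonals again yields $v_{X_n}(z, D_z)$ of order $1$, closing the induction.

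The main technical point is the hook-length bookkeeping in the shrunken diagram $T(a_n,n) \smin D_z$ together with verifying that Lemma \ref{hook-prob-inequality} produces a lower bound on the product that compensates the factor $\binom{n}{2}$ in the denominator of the hook probability. The assumption $a_n = o(n)$ is essential here: for any fixed $z$ both up-diagonals emanating from $z$ still have length of order $n$, so the two factors of order $\sqrt{n}$ in the hook product combine to cancel this denominator.
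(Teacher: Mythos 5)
Your argument is correct, but it takes a genuinely different route from the paper's. The paper fixes one small $n_0$ and uses the general domination result (Lemma~\ref{stoch-dom-general}) to show that for all large $m$ the process $X_m$, after a bounded speed change, is dominated by $X_{n_0}$ up to the time when $\tfrac34\binom{n_0}{2}$ squares have been added; since the up-set $\{z'\ge z\}$ occupies more than a quarter of $T(a_{n_0},n_0)$, the square $z$ must be added to $X_{n_0}$ by that stopping time, so the entire tail of the sequence $\{F_m(z)\}$ is stochastically bounded by a single constant multiple of the a.s.\ finite random variable $F_{n_0}(z)$. You instead induct on the height of $z$: once all of $D_z$ is present the state contains $D_z$, so by the monotonicity of rates (Lemma~\ref{stoch-dom-1}) the hazard rate for adding $z$ is at least $v_{X_n}(z,D_z)$, and the residual waiting time is dominated by an exponential; the hook-length bookkeeping in $T(a_n,n)\setminus D_z$ (reverse hook unchanged, each hook length reduced to $y+2k$) together with Lemma~\ref{hook-prob-inequality} shows the two diagonals each contribute $\asymp\sqrt{n}$, cancelling the $\binom{n}{2}$ in the denominator, so this rate is bounded below uniformly in large $n$. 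Both proofs are sound. Yours is more quantitative --- it yields uniform exponential tail bounds $\prob(F_n(z)>M)\le C_z e^{-c_z M}$ rather than bare tightness, and in the base case it essentially recovers the limiting swap rate $4/\pi$ --- at the cost of an explicit asymptotic evaluation of hook products. The paper's proof is softer and deliberately reuses the domination machinery that is needed throughout the rest of the argument, requiring no asymptotics beyond the crude bound in Lemma~\ref{stoch-dom-general}.
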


 \begin{figure}
    \centering
    \includegraphics[scale=0.4]{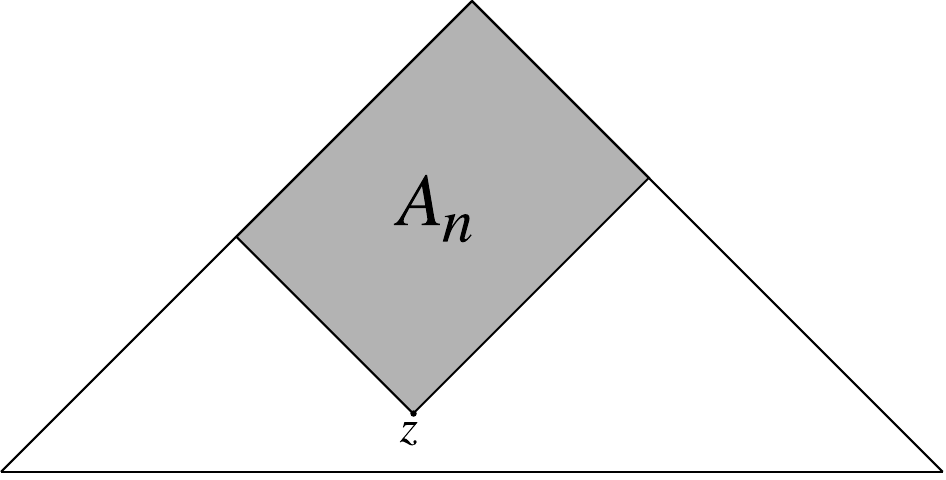} 
    \caption{The set $A_n$ in the proof of Lemma \ref{point-tight}. As the size of the Young diagram goes to infinity, the proportion of $T(0, n)$ taken up by $A_n$ increases to $1/2$, as the point $z$ does not grow with $n$.}
    \label{fig:An}
 \end{figure}

\begin{proof}
Let $z = (x, y)$ and  consider the set $A_n = \{z' \in T(a_n, n) : z' \geq z\}$. Then $A_n$ is a rectangle and as $n\to\infty$ the relative size $|A_n|/|T(a_n,n)|\to 1/2$. Moreover, no square in $A_n$ is added before $z$.

\medskip

Now let $n$ be large enough so that $\card{A_n} > n^2/8,$ and let $m$, $\theta$ be such that
\begin{equation}
  \label{theta-n}
\theta > \frac{n (m-1)}{(n - 1)(m-2)}\left(\frac{1}{4}\sqrt{1- \left(\frac{n + |a_n - a_m|}{m - 2}\right)^2}\right)^{-1}.
 \end{equation}
By Lemma \ref{stoch-dom-general}, $\theta X_m$ dominates $X_n$ until the time when $\frac{3}{4}{n \choose 2}$ squares have been added to $X_n$. By this time at least one square from $A_n$ must have been added to $X_n$, so $z$ must have been added to $X_n$. Therefore  $\theta F_n (z) \sgeq F_m(z)$.

\medskip

As the right hand side of \eqref{theta-n} is bounded uniformly for large $m$ for a fixed value of $n$, there is some $K > 0$ such that  $K F_n(z) \sgeq F_m$ for all large $m$, so $\{F_n(z)\}$ is tight.
 \end{proof}

 Now we can prove Theorem \ref{T:tableau-centre}, which as mentioned previously corresponds precisely with Theorem \ref{T:YT_limit} in the case $u = 0$, and proves all parts of the Theorem in that case except for the mixing property with respect to spatial shift.

\begin{proof}[Proof of Theorem \ref{T:tableau-centre}.]
First assume that $a_n=0$ for all $n$.
Since the product topology on $[0, \infty]^\half$ is compact, $F_n$ has subsequential limits. Suppose that there are two subsequential limits $F^a \neq F^b$. Then for some finite set $K \sset \half$, the restrictions $F^a|_K$ and $F^b|_K$ are not equal. Define $T_n^\ep$ to be the stopping time when $\ep{n \choose 2}$ lattice points have been added to $X_n$. $T_n^\ep \cvgd \infty$ as $n \to \infty$, so for any $z$,  $\prob(F_n(z) \leq T_n^\ep) \to 1$ as $n \to \infty$ since
$$
\{F_n(z) : n \text{  large enough so that } z \in T(0, n)\}
$$
is tight by Lemma \ref{point-tight}. Defining $F^\ep_n$ by  to be $F^\ep_n(z) = F(z)$ for $F(z) \leq T_n^\ep$ and $F^\ep_n(z) = \infty$ otherwise, 
$\prob(F^\ep_n|_K \neq F_n|_K) \to 0$ as $n \to \infty$.

\medskip

Now by Corollary \ref{monotone-cor}, for large enough $m$ there exists $N(m)$ such that $X(0, n, (1 + 3\ep)n) \sgeq X(0, m, m)$ up to time $T_m^\ep$, for all $n \geq N.$ This implies that $(1 + 3 \ep) F^\ep_m(z) \sgeq F_n(z)$. Under these conditions we can appeal to Lemma \ref{domination-implies-convergence}, which gives that $F_n|_K$ does indeed have a distributional limit, contradicting that $F_1|_K \neq F_2|_K$ . Thus $F_n$ itself has some distributional limit $F$. Note that $F \in [0, \infty)^\half$ almost surely since each $\{F_n(z)\}$ is a tight sequence on $[0, \infty)$.

\medskip

The same proof works in the case when $X_n$ is centred at $a_n$ for a sequence $a_n = o(n)$, since all the domination lemmas can be used in exactly the same way. Moreover, translation invariance follows by comparing the sequences $X_n (a_n, n, n)$ and $X_n (a_n + 2, n, n)$ since the difference between the center points, $d_n = 2 = o(n)$.
\end{proof}

\end{section}

\begin{section}{Bounding Rates of Adding Lattice Points}
\label{S:rate}
The goal of this section and the next one is to establish regularity properties of the limit $F$ of random Young tableaux in order to apply the Edelman-Greene bijection. In order to do this we will show that at every time $t$, the points in the limit tableau that are added before time $t$ form a set of disjoint downward closed subsets of $\half$, and that the limit $F$ is still an order-preserving injection. The key to both of these proofs is the following proposition about bounding the rates of adding points in the finite tableau processes.

Throughout this section we let $X_n$ be the tableau process $X_n(0, n, n)$.

\begin{prop}
\label{rate-cor-2} There exist constants $K_1$ and $K_2$ such that for any $z \in \half$  and for any $t$,
$$\expt \left[ \sup_{s \leq t}  v\big(z, X_n(s)\big) \right] \leq K_1t+ K_2,$$
for all large enough $n$ (how large we need to take $n$ depends on the square $z$).
\end{prop}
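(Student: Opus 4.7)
My plan is first to use monotonicity of $v(z, \cdot)$ in its set argument to replace the supremum with a quantity depending only on the state at the deterministic time $t$, and then to bound the rate at time $t$ by combining the hook formula with a stochastic-domination argument that controls how many squares have been added near $z$.

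Lemma~\ref{stoch-dom-1} gives that $v(z, A)$ is non-decreasing in $A$ for $z \notin A$, and is zero for $z \in A$. Because $X_n(s)$ itself grows monotonically in $s$, the function $s \mapsto v(z, X_n(s))$ is non-decreasing on $[0, \tau_z)$ and jumps to zero at $\tau_z := \inf\{s : z \in X_n(s)\}$. Hence
\[
\sup_{s \le t} v(z, X_n(s)) \;\le\; v\!\left(z,\, X_n(t) \setminus \{z\}\right),
\]
and it suffices to bound the expectation of the right-hand side.

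For this I would work from the hook-formula expression
\[
v(z, A) \;=\; \frac{n}{|T(0,n) \setminus A|} \prod_{y \in R_z \setminus A} \frac{h_y^A}{h_y^A - 1}.
\]
The prefactor is $O(1/n)$ for $|A|$ bounded away from $\binom{n}{2}$. The product I would split into a ``near'' part (reverse-hook points $y$ whose height above $z$ is at most some constant $M$) and a ``far'' part. On the far part, Lemma~\ref{hook-prob-inequality}, applied much as in the computation showing $v(z, \emptyset) = O(1)$, bounds the product by a quantity of order $n$, uniformly over those sets $A$ whose intersection with each hook $H_y$ occupies only a bounded fraction of that hook. On the near part there are only $O(M)$ factors, each at most $2$, so this portion contributes a constant depending on $M$ times a factor that depends only on $|A \cap W|$ for a fixed bounded neighborhood $W$ of $z$.

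To control the expected near-part contribution I would apply Corollary~\ref{monotone-cor}: for $n$ large enough (depending on $z$), the restriction of $X_n$ to $W$ is comparable to a fixed tableau process, and in particular $\E |X_n(t) \cap W| \le C_W \, t + C_W'$. Combining the three bounds yields the desired $\E v(z, X_n(t) \setminus \{z\}) \le K_1 t + K_2$. The main obstacle is the second step: a naive ``factor of $2$ per added square'' bound on the product over $R_z$ is exponential in $|A \cap W|$ and would be far too weak when combined with the linear growth of $\E|X_n(t) \cap W|$. The saving observation is that Lemma~\ref{hook-prob-inequality}, applied carefully in the far tail, shows that distant perturbations contribute only polylogarithmically even after many squares have been added; the principal technical work lies in showing that the near-$z$ hook depletion accumulates at worst linearly, rather than multiplicatively, in $|A \cap W|$.
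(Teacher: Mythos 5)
Your first reduction is sound in spirit and matches the paper's: the paper defines a ``modified rate'' $w(z,A)=v(z,A\setminus S_z)$, where $S_z$ is the whole cone above $z$ (not just $\{z\}$ --- this matters, since after time $\tau_z$ the process adds squares above $z$, so $X_n(t)\setminus\{z\}$ is no longer the complement of a Young diagram and $v(z,\cdot)$ is not defined on it), and uses monotonicity of $w$ to get $\sup_{s\le t}v(z,X_n(s))\le w(z,X_n(t))$. That part is a fixable technicality.

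The genuine gap is in the second step, and you have in effect flagged it yourself without closing it. Two specific problems. First, the ``far part is $O(n)$ uniformly'' claim is false without control on the shape of $A$: the deterministic bound one can actually prove (the paper's Lemma \ref{maxheight}) is $\prod_{y\in R_z}(1+\tfrac1{f^z_y-1})<2n(\beta+1)$ where $\beta$ is the maximal height of $A$, and $\beta$ can be of order $n$; nothing guarantees that $A$ meets each hook in only a bounded fraction. Second, your proposed control of the near part via $\expt\card{X_n(t)\cap W}\le C_W t+C_W'$ cannot come from Corollary \ref{monotone-cor}: that corollary says the large sped-up process \emph{dominates} a fixed small process, i.e.\ it bounds the occupation of a window from \emph{below}, not from above; an upper bound on the expected occupation of a window is essentially equivalent to the rate bound you are trying to prove, so this is circular. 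The paper escapes this by a symmetrization device you do not have: it dominates $X_n$ by a rate-$8n$ \emph{cylindrical} tableau process, proves the deterministic inequality $\sum_{z\in\mathcal B}w(z,A)\le 48(\card{A}+n)$ summed over the entire bottom row (via an incremental construction of $A$ and a Cauchy--Schwarz/telescoping estimate on the ratios $D^z_i/D^z_{i-1}$), and then uses the rotational symmetry of the cylinder to convert this summed bound into a pointwise bound $\expt w(z,C_n(T))\le 48(\card{C_n(T)}/n+1)$. That averaging step is exactly what replaces the ``show the hook depletion accumulates linearly'' claim that your proposal leaves as its principal open technical point.
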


\bigskip

\noindent {\bf The cylindrical tableau process.} \qquad
To prove this proposition we introduce cylindrical Young diagrams and the cylindrical tableau process. Define $\scrC(n)$, the {\bf discrete cylinder of size $n$}, to be the set of equivalence classes of points $(x, y)$ in $\{ (x, y) \in \half :  1 \leq y \leq n - 1\}$ where $(x, y) \sim (x', y')$ if $y = y'$ and $x \equiv x' \pmod{2(n-1)}$. This cylinder has the following partial order inherited from the partial order on $\half$. For $(x, y), (x', y') \in \scrC(n)$, $(x, y) \leq (x', y')$ if $(x', y') \sim (x'', y'')$ for some $(x'', y'') \in \half$ with $(x', y) \leq (x'', y'')$. 

Thus we have a notion of downward closed sets in $\scrC(n)$, and notions of corners, hooks, and reverse hooks in $\scrC(n) \setminus A$ for any downward closed set $A \sset \scrC(n)$ by thinking of $\scrC(n)$ as a cylindrical Young diagram. As in a usual Young diagram, for any corner $z \in \scrC(n) \setminus A$ we can define the ``hook probability" for $z$ by

\begin{equation*}
\hookp(\scrC(n)\setminus A, z) = \frac{1}{\card{\scrC(n) \setminus A} }\prod_{y \in R_z} \left(1 + \frac{1}{h_y -1} \right).
\end{equation*}

 Now we define the {\bf cylindrical tableau process} $C(t) = C(n, r)(t)$ on $\scrC(n)$ with rate $r$ as the continuous time Markov process $C(t)$ where a square $z$ is added to configuration $A$ at rate
$$v_C(z,A)=r \hookp (\scrC(n)\setminus A,z).$$ Note that the hook probabilities in  cylindrical tableaux do not sum to 1 as they do with staircase tableaux. This is not an issue as we are only using the hook probabilities to define rates, not as actual probabilities.

\medskip

The symmetry in the cylindrical process makes it easier to bound the expectation of the rate $v_C(z, C(t))$. We can then use that the staircase tableau process can be coupled with an appropriately sped up cylindrical process in a way that allows rates in the staircase process to be controlled by the rates in the cylindrical process. This will prove Proposition \ref{rate-cor-2}.

\bigskip

\noindent {\bf The modified rate.} \qquad Instead of working with $v(z, A)$, we will replace it with a monotone increasing function $w(z,A)$ called the modified rate.  The {\bf modified rate} $w_C(z, A)$ is the rate of adding $z$ to the configuration $A$ with the cone
$
S_z=\{z' : z' \geq z\}
$
above $z$ removed. More precisely
$$
w_C(z,A)=v_C(z,A\setminus S_z).
$$

\medskip

By the definition of $v$, the modified rate satisfies
$$w_C(z, A) = \frac{r}{\card{\scrC(n) \setminus A\cup S_z }} \prod_{y \in R_z} \left(1 + \frac{1}{f^z_y -1}\right).$$
Here $f_y^z$ is the hook length of $y$ in the residual tableau corresponding to the state $A\setminus S_z$.
%
%
%
We also define $w_X(z, A)$ for a staircase tableau process $X$ in the analogous way. 

\medskip

Since $v_C$ is monotone in $A$ as long as $z_C$ has not been added, we get that $w_C$ is monotone in $A$ (even if $z$ has been added). Therefore to prove Proposition  \ref{rate-cor-2} it suffices to prove the following.

\begin{prop} 
\label{mod-rate}
For all large enough $n$, we have
\begin{equation}  \label{mod-rate-bound}
\expt \left[ \sup_{s \leq t}  w_X(z, X_n(s)) \right] = \expt w_X(z, X_n(t)) \leq K_1t+ K_2
\end{equation}

\end{prop}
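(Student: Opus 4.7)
The equality $\expt[\sup_{s\le t} w_X(z, X_n(s))]=\expt w_X(z, X_n(t))$ is immediate from the monotonicity of $w_X(z,\cdot)$ in its second argument, as noted just before the proposition: since $X_n(s)\subseteq X_n(t)$ for $s\le t$, the map $s\mapsto w_X(z,X_n(s))$ is non-decreasing, so its supremum over $[0,t]$ is attained at $s=t$.

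For the upper bound, the plan is to transfer the estimate to the cylindrical tableau process $C=C(N,r)$ for $N$ slightly larger than $n$ and $r$ slightly larger than $n$, then exploit the cylindrical translation symmetry. The transfer proceeds in two steps. First, one establishes the pointwise comparison $w_X(z,A)\le w_C(z,A)$ for every downward-closed $A$: since $w(z,A)=v(z,A\setminus S_z)$, this reduces to the rate comparison $v_X\le v_C$ at the cone-removed state, which can be proved by the same product-of-hook-ratios calculation as in Lemma \ref{stoch-dom-general}, with the extra factor $r/n$ absorbing the difference between the cylindrical and staircase hook lengths. Second, Lemma \ref{stoch-dom-criterion} supplies a coupling with $X_n\sleq C$ up to time $t$, and combining this with the monotonicity of $w_C$ in its second argument yields the pathwise chain
\[
w_X(z,X_n(t))\;\le\;w_C(z,X_n(t))\;\le\;w_C(z,C(t)).
\]

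It then suffices to bound $\expt w_C(z, C(t))$. The cylindrical process is invariant under the horizontal shift $\tau:(x,y)\mapsto(x+2,y)$, so $\expt w_C(z, C(t))$ depends only on the height $y$ of $z$. Averaging over the $N-1$ squares at height $y$ gives
\[
(N-1)\,\expt w_C(z, C(t)) \;=\; \expt \sum_{z':\,\mathrm{ht}(z')=y} w_C(z', C(t)),
\]
and the right-hand side can be bounded by writing out the explicit product formula for $w_C$ in the cylinder, using that cylindrical hook lengths at each height are deterministic, and using that the total number of squares added by $C$ by time $t$ is at most $rt$ in expectation. These contributions produce the constant term $K_2$ (from the initial configuration and the product prefactor) and a term linear in $t$ (from the accumulation of squares added over $[0,t]$). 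The main technical obstacle is the first step: verifying the rate comparison $v_X\le v_C$ at cone-removed states and controlling it uniformly in $z$, so that the resulting constants $K_1,K_2$ do not depend on $z$. Once this is in place, assembling the estimates uniformly in the height $y$ yields the desired bound $K_1t+K_2$.
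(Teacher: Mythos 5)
The equality via monotonicity of $w$ and the first half of your plan — transferring the bound to a cylindrical process via a rate comparison and a coupling, then invoking translation symmetry of the cylinder — match the paper's argument (its Lemma \ref{cyl-dom} and the use of rotational symmetry). But there is a genuine gap at the step that carries all the technical weight: bounding $\expt w_C(z,C(t))$. Your justification rests on the claim that ``cylindrical hook lengths at each height are deterministic,'' which is false. The modified rate is $w_C(z,A)=\frac{r}{|\scrC(n)\setminus A\cup S_z|}\prod_{y\in R_z}(1+\frac{1}{f^z_y-1})$, and the hook lengths $f^z_y$ are taken in the residual diagram $\scrC(n)\setminus(A\setminus S_z)$, so they depend on the random configuration. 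This dependence is exactly the difficulty: for a single $z$ the product can be as large as order $n(\beta+1)$ where $\beta$ is the maximal height of $A$ (the paper's Lemma \ref{maxheight}), so the symmetry reduction to a row average does not by itself yield anything linear in $t$. What is needed is a deterministic bound on the row sum, uniform over downward-closed configurations — the paper's Proposition \ref{bounding-rate-in-exp}, $\sum_{z\in\mathcal B}w(z,A)\le 48(|A|+n)$, proved by building $A$ one square at a time, controlling the multiplicative change of each product via telescoping ratios $Q(s)$ and Cauchy--Schwarz. Nothing in your proposal substitutes for this estimate, and your phrase ``writing out the explicit product formula'' does not engage with the configuration-dependence that makes it nontrivial.

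Two smaller points. First, the coupling $X_n\sleq C$ is only valid up to the stopping time when order $n^2$ squares have been added to $X_n$; one must separately dispose of the event that this happens before time $t$ (the paper does this with a Poisson tail bound and the crude bound $w\le n$), rather than asserting domination ``up to time $t$.'' Relatedly, a rate ``slightly larger than $n$'' for the cylinder does not suffice — the volume ratio $|\scrC(n)\setminus A|/|T(0,n)\setminus A|$ forces a constant factor (the paper uses $8n$). Second, your plan handles arbitrary heights $y$ by averaging over the height-$y$ row, which would require the row-sum bound uniformly in $y$; the paper instead proves the bound only for the bottom row and extends to general $z$ by the separate comparison $w(z',A)\le 2\,w(z,A)$ for $z'\ge z$. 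Either route is viable, but only once the row-sum estimate is actually proved.
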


We first need a lemma bounding the products in the hook probability formula.

\begin{lemma}
\label{maxheight}
Let $A$ be a downward closed subset of $\mathcal C(n)$, and let $\beta$ be the maximal second coordinate of squares in $A$. Then we have
\begin{align*}
\prod_{y \in R_z} \left(1 + \frac{1}{f^z_y(A) -1}\right)
< 2 n (\beta + 1).
\end{align*}
\end{lemma}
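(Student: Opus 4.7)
The plan is to split $R_z$ into its two arms and bound the product along each arm by $\sqrt{2n(\beta+1)}$, which multiplies to the claimed bound $2n(\beta+1)$. For $z = (x_z, y_z)$, these arms are $\{(x_z + k, y_z + k) : 1 \le k \le K\}$ and $\{(x_z - k, y_z + k) : 1 \le k \le K\}$, where $K = n - 1 - y_z$. The two cases are symmetric, so it suffices to analyze one.

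Fix the right arm and let $y = (x_z + k, y_z + k)$. A direct inspection of $H_y$ shows that $H_y \cap S_z$ is the $(k+1)$-point segment running from $y$ down to $z$, so from the identity $f_y^z(A) = |H_y \cap S_z| + |(H_y \setminus S_z) \setminus A|$ we get the trivial lower bound $f_y^z \ge k + 1$. For a sharper bound, note that any square of $H_y \setminus S_z$ with second coordinate exceeding $\beta$ cannot lie in $A$; one of the two diagonal arms of $H_y$ reaches second coordinate $y_z + k - 1$ and thereby contributes $\max(0,\,y_z + k - 1 - \beta)$ such squares. Since $y_z \ge 1$, this gives $f_y^z \ge (k+1) + (k - \beta) = 2k + 1 - \beta$ whenever $k > \beta$.

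Combining the two lower bounds, the single-arm product is at most
\[
  \prod_{k=1}^{\min(K,\beta)} \frac{k+1}{k}\cdot \prod_{k=\beta+1}^{K} \Bigl(1 + \frac{1}{2k - \beta}\Bigr).
\]
The first factor telescopes to $\min(K,\beta) + 1 \le \beta + 1$. In the second, the substitution $\ell = k - \beta$ gives $\prod_{\ell=1}^{K-\beta}(1 + 1/(2\ell + \beta))$, which after an appropriate shift of index becomes a product of the form $\prod(1 + 1/(2i - 1))$ over a consecutive block of integers when $\beta$ is odd, or of half-integers when $\beta$ is even, with lower endpoint strictly greater than $1$. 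Lemma \ref{hook-prob-inequality} then bounds this piece by $\sqrt{(2K - \beta + 1)/(\beta + 1)} \le \sqrt{2n/(\beta + 1)}$. Multiplying the two pieces, each arm contributes at most $\sqrt{2n(\beta+1)}$, and the two arms together give the claimed bound.

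The main technical obstacle is the parity-dependent bookkeeping when invoking Lemma \ref{hook-prob-inequality}: for even $\beta$ (including the degenerate $\beta = 0$ case, in which the first product is empty) the shifted indices are half-integers starting at $3/2$, while for odd $\beta$ they are integers starting at $(\beta+3)/2 \ge 2$; one must check in each case that the hypothesis $a > 1$ is satisfied and that the two subcases yield the same final estimate. The edge case $K \le \beta$, where the second product is empty, is handled directly: each arm contributes at most $K + 1$, so the two arms together give $(K + 1)^2 \le n(\beta + 1) \le 2n(\beta + 1)$.
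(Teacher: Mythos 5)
Your proof is correct and follows essentially the same route as the paper's: the same hook-length lower bound $f^z_y \ge (k+1) + (k-\beta)^+$ obtained from the segment down to $z$ plus the unoccupied high squares on the other arm, the same telescoping of the first $\beta$ factors to $\beta+1$, and the same appeal to Lemma \ref{hook-prob-inequality} for the remaining block. The paper simply writes the two arms as a single squared product and is more cavalier about the integer/half-integer indexing that you track explicitly.
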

\begin{proof}
If we order squares in the reverse hook of $z$ by their second coordinate ($s+1$ below), we get upper bounds on the individual factors. This gives an overall upper bound
\begin{align*}
\prod_{s = 1}^{n-1} \left(1 + \frac{1}{s + (s - \beta)^+}\right)^2
= (\beta + 1)^2 \prod_{s = (\beta + 3)/2}^{n - 1 - \beta} \left(\frac{2s}{2s -1} \right)^2< 2 n (\beta + 1).
\end{align*}
The last inequality is from Lemma \ref{hook-prob-inequality}.
\end{proof}

\begin{remark} The same bound holds in the staircase tableau case.
\end{remark}

Next, we bound $w(z, A)$ for $z$ at the bottom of the cylinder.

\begin{prop}
\label{bounding-rate-in-exp}
Let $\mathcal B$ denote the bottom row of $\mathcal C(n)$. Then we have that 
$$\sum_{z\in \mathcal B} w(z, A) \leq 48\left(|A| + n\right),$$
in the rate $n$ cylindrical tableau process.
\end{prop}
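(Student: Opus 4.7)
The plan is to obtain a pointwise bound on $w(z,A)=v_C(z, A\setminus S_z)$ by combining the hook-probability formula with Lemma~\ref{maxheight}, and then to sum over $z\in\mathcal{B}$ via a charging argument that exploits the downward-closed structure of $A$ and the cyclic geometry of the cylinder.

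Since $S_z$ and $A\setminus S_z$ are disjoint subsets of $\mathcal{C}(n)$, the denominator in the hook-probability expression for $w(z,A)$ is bounded below by $|S_z|=\binom{n}{2}$. Writing $\beta_z$ for the maximum second coordinate appearing in $A\setminus S_z$ and applying Lemma~\ref{maxheight} to the residual shape, I obtain the pointwise bound
\[
w(z,A)\;\leq\;\frac{n\cdot 2n(\beta_z+1)}{n(n-1)/2}\;\leq\;8(\beta_z+1).
\]
Summing over the $n-1$ bottom squares reduces the proposition to establishing a linear bound $\sum_{z\in\mathcal{B}}\beta_z \leq C(|A|+n)$ for some absolute constant $C$; the stated constant $48$ then arises by absorbing factors.

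For the summation step, I would use the layer-cake identity $\beta_z=\sum_{y\geq 1}\mathbf{1}[\beta_z\geq y]$, swap the order of summation, and observe that $\beta_z\geq y$ forces $A$ to contain some point $(x',y')$ with $y'\geq y$ lying outside $S_z$, equivalently at cyclic distance $d(x,x')\geq y'$ from $z=(x,1)$. For each fixed $(x',y')\in A$ the number of such bottom squares is at most $(n-2y')_{+}+O(1)$, giving a preliminary estimate of the form $\sum_z\beta_z\leq \sum_{(x',y')\in A} y'(n-2y')_{+}+O(n)$. The main obstacle is that this preliminary bound is only of order $O(n|A|)$, which is too weak. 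To recover the correct linear dependence I would combine two structural facts: (i) the cyclic penalty $(n-2y')_{+}$ vanishes for $y'\geq n/2$, so each point of $A$ contributes at most $O(n^{2})$ regardless of its height; and (ii) downward-closedness forces a triangle of $\binom{y'+1}{2}$ points of $A$ below every $(x',y')\in A$, so a single tall contribution is automatically packaged with many short companions across which the charge can be distributed. A case split at a threshold height $y_{0}$ of order $\sqrt{|A|/n}$, using (i) above the threshold and (ii) below it, then yields $\sum_{z\in\mathcal{B}}\beta_z \leq C(|A|+n)$ and hence the required bound $\sum_{z\in\mathcal{B}}w(z,A)\leq 48(|A|+n)$.
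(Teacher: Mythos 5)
Your opening step is fine: using $\lvert \mathcal C(n)\setminus A\cup S_z\rvert\geq\lvert S_z\rvert=\binom n2$ together with Lemma~\ref{maxheight} does give the pointwise bound $w(z,A)\leq 8(\beta_z+1)$, and this is exactly how the paper reduces the proposition to a bound on $\sum_z D^z$ with $D^z=\prod_{y\in R_z}(1+\frac1{f^z_y-1})$. The problem is the summation step: the intermediate inequality $\sum_{z\in\mathcal B}\beta_z\leq C(|A|+n)$ that you reduce to is false. Take $A$ to be the single triangle of all points below $(x_0,h)$, so $|A|=\binom{h+1}{2}\sim h^2/2$. For every bottom-row $z$ at cyclic distance greater than $h-1$ from $x_0$ the apex lies outside $S_z$, hence $\beta_z=h$, giving $\sum_z\beta_z\geq(n-2h)h$. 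With $h=\sqrt n$ this is of order $n^{3/2}$ while $|A|+n$ is of order $n$, so no absolute constant $C$ works. Your rescue attempts do not help here: fact (i) concerns heights $\geq n/2$ and is vacuous for $h\ll n$, fact (ii) only spreads a charge of order $nh$ over the $\sim h^2/2$ points of the triangle (i.e.\ $\sim n/h$ per point, still unbounded), and your threshold $y_0\sim\sqrt{|A|/n}=h/\sqrt{2n}$ is below $1$ in this example, so the whole set falls into the regime where you only get $O(n|A|)$.

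The underlying issue is that the worst-case bound of Lemma~\ref{maxheight} is attained only when $A$ fills the legs of $R_z$ up to height $\beta$; for a typical $z$ the product $D^z$ is close to its empty-configuration value $\approx 2n$ even when $\beta_z$ is large, and no bound that depends on $A$ only through the profile $(\beta_z)_{z\in\mathcal B}$ can see this. The paper's proof avoids the pointwise route entirely: it builds $A$ one square at a time in non-decreasing order of height and controls the increments of $\sum_z D^z$, showing that adding a square at height $\beta_i$ changes the sum by at most $\bigl[\max_z D^z_{i-1}\bigr]\cdot\sum_z\bigl|D^z_i/D^z_{i-1}-1\bigr|\leq 4n\beta_i\cdot\frac{3}{\beta_i}=12n$. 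The essential cancellation is between the worst-case magnitude $O(n\beta)$ of an individual $D^z$ and the total relative perturbation $O(1/\beta)$ that one new square inflicts across all $z$ (proved by tracking the two intersection points of $R_z$ with the reverse hook of the new square, which sweep out bijections onto $\{\beta_i+1,\dots,n-1\}$, plus Cauchy--Schwarz). Some argument of this multiplicative, incremental type is needed; the additive charging scheme you propose cannot close the gap.
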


We have only included the explicit constant 48 in the above proposition to streamline the proof. It is far from optimal for large $n$.
\begin{proof}
For $z\in \mathcal B$, define
$$
D^z=\prod_{y \in R_z} \left(1 + \frac{1}{f^z_y(A) -1}\right).
$$
It suffices to show that
\begin{equation}
\label{rate-bound-2}
\sum_{z\in\mathcal B} D^z \leq 12 (n |A|+ n^2),
\end{equation}
since $\card{\scrC(n) \setminus A\cup S_z} \geq \card{S_z} \ge {n \choose 2}$. To establish this bound, we will build the set $A$ in $|A|$ steps by starting with $A_0=\emptyset$ and repeatedly adding a single square $(\alpha_i,\beta_i)$ to $A_{i-1}$ to get $A_i$. We do this in a way so that $A_i$ stays downward closed and $\beta_i$ are non-decreasing.

Define the quantities $D^z_i$ for $A_i$ analogously to $D^z$.
By simple algebra,
$$
\sum_{z\in\mathcal B} D^z \le n\max_{z\in \mathcal B}D_0^z+\sum_{i=1}^{|A|}\left[\max_{z\in \mathcal B}{D^z_{i-1}}\right]\sum_{z\in \mathcal B}\left|\frac{D^z_{i}}{D^z_{i-1}} - 1\right|.
$$
By defining $\beta_0=0$, we have that $\beta_i$ is the maximal $y$-coordinate of a square in $A_i$.
The first term on the right is bounded above by $2n^2$ by Lemma \ref{maxheight}. By the same lemma, 
$$
\max_{z\in \mathcal B}{D^z_{i-1}}\le 2n(\beta_{i-1}+1)\le 2n(\beta_i+1) \leq 4n \beta_i,
$$
since $\beta_i \geq 1$ for $i \geq 1$. So it suffices to show that for any $ i \ge 1$, we have
\begin{equation}\label{ratios}\sum_{z\in \mathcal B}\left|\frac{D_{i}^z}{D_{i-1}^z}-1 \right|\le \frac{3}{\beta_i}.
\end{equation}
To do this, recall that $A_{i}=A_{i-1}\cup\{(\alpha_i,\beta_i)\}$. Note that if for $z\in \mathcal B$ we have $z\le (\alpha_i,\beta_i)$ then $D_{i}^z/D_{i-1}^{z}=1$. Let $\mathcal B'=\mathcal B\setminus \{z:z \le (\alpha_i,\beta_i)\}$. Then $|\mathcal B'|=n-1-\beta_i$.

For any $z\in \mathcal B'$ the reverse hooks $R_z$ and $R_{(\alpha_i,\beta_i)}$ intersect at exactly two points, one on the right leg of $R_z$ and one on the left. Call the $y$-coordinates of these points $s_z$ and $s'_z$, respectively. As we move $z$, these intersection points exhaust the set $R_{(\alpha_i,\beta_i)}$. More precisely, $s$ and $s'$ are both bijections from $\mathcal B'$ to  $\{\beta_i+1,\ldots, n-1\}$. For $z\in \mathcal B'$ we have
$$
D_{i}^z/D_{i-1}^{z}=Q(s_z)Q(s'_z),
$$
where
$$
Q(s)=
\frac{1 + \frac{1}{2s - \beta_i  - 2}}{1 + \frac{1}{2s - \beta_i - 1}} =  \frac{(2s - \beta_i -1)^2}{(2s - \beta_i- 2)(2s - \beta_i)}.
$$
Since $s$ and $s'$ are bijections, Cauchy-Schwarz gives
$$
\sum_{z\in \mathcal B'} Q(s_z)Q(s'_z) \le \sum_{s=\beta_i+1}^{n-1} Q(s)^2.
$$
By simple algebra $Q(s)\ge 1$ and
$$
Q(s)^2-1\le \frac{3}{(2s-\beta_i - 1)^2}.
$$
So the left hand side of \eqref{ratios} is bounded above by
\[
\sum_{s=\beta_i+1}^{n-1}\frac{3}{(2s-\beta_i - 1)^2}< \frac{3}{\beta_i}. \qedhere
\]
\end{proof}

Now we can embed the staircase tableau of size $n$ into the cylinder $\scrC(n)$ by identifying the subset $T(0, n)$ with its equivalence class in $\scrC(n)$. Thus we can talk about stochastic domination of a cylindrical tableau process over a staircase tableau process, and we can talk about domination of modified rates.

\begin{lemma}
\label{cyl-dom}
Let $T$ be the time at which $n^2/4$ particles have been added to the tableau process $X_n(t)$. Let $C_n(t)$ be a cylinder process on $\scrC (n)$ with rate $8n$. Then there exists a coupling so that for $n \geq 3$,
$$
X_n(t) \le C_n(t) \qquad \text{ for all }t\le T.
$$
Moreover, for any $z$ in the bottom row of $T(0, n)$, $w_C(z, C_n(t)) \geq w_X(z, X_n(t))$ for all $t \leq T$ in this coupling.
\end{lemma}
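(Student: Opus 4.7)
The plan is to apply Lemma \ref{stoch-dom-criterion} with $X_1 = X_n$, $X_2 = C_n$, and $\scrS$ the set of downward closed subsets of $T(0, n)$ with at most $n^2/4$ squares, so that $T$ is the first exit time of $X_n$ from $\scrS$. By the monotonicity in Lemma \ref{stoch-dom-1}, it suffices to verify the pointwise rate bound $v_{X_n}(z, B) \leq v_{C_n}(z, B)$ for every $B \in \scrS$ and every $z$ that is simultaneously a corner of $T(0, n) \setminus B$ and of $\scrC(n) \setminus B$. Writing the ratio as
\[
\frac{v_{C_n}(z, B)}{v_{X_n}(z, B)} = 8 \cdot \frac{|T(0, n) \setminus B|}{|\scrC(n) \setminus B|} \cdot \frac{\prod_{y \in R_z^C(B)} h_y^C(B)/(h_y^C(B) - 1)}{\prod_{y \in R_z^X(B)} h_y^X(B)/(h_y^X(B) - 1)},
\]
the prefactor is at least $8 \cdot (n(n-1)/2 - n^2/4)/(n-1)^2 = 2n(n-2)/(n-1)^2 \geq 3/2$ for $n \geq 3$, using $|\scrC(n)| = (n-1)^2$ and $|T(0, n)| = n(n-1)/2$. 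For the product ratio, identifying $T(0, n)$ with its embedding in $\scrC(n)$, we have $R_z^X(B) \subseteq R_z^C(B)$ with the extra set independent of $B$ (since $B \subseteq T(0, n)$), and $h_y^X(B) \leq h_y^C(B)$ for each $y \in R_z^X(B)$ because the cylinder provides a larger downward V while its intersection with $B$ coincides with the staircase case. A telescoping comparison in the spirit of Lemma \ref{hook-prob-inequality} then shows the product ratio is at least $2/3$, so the overall ratio is at least $1$.

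For the modified rate statement, I first verify that for $z = (x_z, 1)$ in the bottom row of $T(0, n)$, the upper cones in $T(0, n)$ and $\scrC(n)$ agree on $T(0, n)$: for any $w \in T(0, n)$, the existence of a nonzero cylinder shift $m$ making a representative $w + 2(n-1) m e_1$ comparable to $z$ in $\half$ would require $m$ to lie in an interval of length $(y_w - y_z)/(n-1) \leq (n-2)/(n-1) < 1$, and the bounds $|x_w|, |x_z| \leq n - 2$ together with the upper-cone conditions preclude $m = \pm 1$. Denoting the upper cones by $S_z^{\mathrm{stair}}$ and $S_z^{\mathrm{cyl}}$, it follows that $S_z^{\mathrm{stair}} = S_z^{\mathrm{cyl}} \cap T(0, n)$. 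In the coupling from Lemma \ref{stoch-dom-criterion}, setting $B_\ast := X_n(t) \setminus S_z^{\mathrm{stair}} = X_n(t) \setminus S_z^{\mathrm{cyl}}$, we have $|B_\ast| \leq n^2/4$ and $z$ remains a corner of both residuals (its downward V is empty). The rate comparison above, which goes through for any $B \subseteq T(0, n)$ with $|B| \leq n^2/4$ and not just for states, then gives $w_X(z, X_n(t)) = v_X(z, B_\ast) \leq v_C(z, B_\ast)$; applying monotonicity of $v_C$ (Lemma \ref{stoch-dom-1}) together with $B_\ast \subseteq C_n(t) \setminus S_z^{\mathrm{cyl}}$ completes the chain $v_C(z, B_\ast) \leq v_C(z, C_n(t) \setminus S_z^{\mathrm{cyl}}) = w_C(z, C_n(t))$.

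The main obstacle is establishing the $2/3$ lower bound on the product ratio. The prefactor contributes a comfortable factor of at least $3/2$, but the product ratio is delicate: $R_z^C$ has strictly more factors than $R_z^X$ (each $\geq 1$), yet each common factor is smaller in the cylinder because the cylinder supplies larger hooks. Proving the bound uniformly in $B$, especially for $B$ with $|B|$ close to the threshold $n^2/4$ where staircase hooks can collapse and the individual factors $h/(h-1)$ blow up, requires careful organization of the telescoping estimate from Lemma \ref{hook-prob-inequality} and separate treatment of the "extra" factors from points lying on the upward V of $z$ outside the staircase.
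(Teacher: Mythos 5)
There is a genuine gap, and you have located it yourself: the claimed lower bound of $2/3$ on the ratio of reverse-hook products is never proved. You invoke ``a telescoping comparison in the spirit of Lemma \ref{hook-prob-inequality}'' and then, in your closing paragraph, concede that carrying this out uniformly in $B$ --- in particular for $|B|$ near $n^2/4$, where individual factors can blow up --- is ``the main obstacle.'' As written, the central inequality $v_{X_n}(z,B)\le v_{C_n}(z,B)$ is therefore not established, and everything downstream (the coupling and your otherwise careful modified-rate argument) rests on it.

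The difficulty is self-inflicted, stemming from the false premise that ``the cylinder provides a larger downward V,'' so that common factors could be strictly smaller in the cylinder. In fact, for any $y=(x,h)\in T(0,n)$ the entire downward cone $\{(x\pm k,\,h-k):0\le k\le h-1\}$ already lies in $T(0,n)$ (neither $x'+y'$ nor $y'-x'$ increases along either leg), and no two of its points are identified in $\scrC(n)$ since $2k<2(n-1)$. Hence for every $y\in R_z^X(B)$ the hook of $y$ in $\scrC(n)\setminus B$ coincides with its hook in $T(0,n)\setminus B$, i.e.\ $h_y^C(B)=h_y^X(B)$ exactly, not merely $\ge$. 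Every common factor is therefore equal, every extra factor over $R_z^C\setminus R_z^X$ is $\ge 1$, and the product ratio is trivially $\ge 1$; no telescoping and no delicate uniformity in $B$ is needed. This is exactly the observation the paper's proof uses, and combined with your (correct) prefactor bound $2n(n-2)/(n-1)^2\ge 3/2$ for $n\ge 3$ it closes the argument. Your verification that the upper cones of a bottom-row $z$ agree in $\scrC(n)$ and $T(0,n)$, and the ensuing monotonicity chain for the modified rates, are fine --- indeed more explicit than the paper, which simply asserts that the modified-rate relation ``follows in the same way.''
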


\begin{proof}
To prove the existence of a coupling, it suffices to show that for any $A$, and any lattice point $z$ that is both a corner of $\scrC(n)\setminus A$ and $T(0, n) \setminus A$, that
$v_{X_n}(z, A) \leq v_{C_n}(z, A).$
From here we can appeal to Lemmas \ref{stoch-dom-criterion} and \ref{stoch-dom-1}, which can be proven in the exact same way if one of the processes is a cylinder process.

\medskip

Reverse hooks in $\scrC(n)$ are larger than reverse hooks in $T(0, n)$, and for $y \in \scrC(n)\cap T(0, n)$, we have $h_y^X = h_y^C$, so
\begin{equation}
\label{hook-cyl}
 \prod_{y \in R^z_X} \left(1 + \frac{h^X_y}{h^X_y -1} \right) \leq \prod_{y \in R^z_C} \left(1 + \frac{h^C_y}{h^C_y -1} \right).
\end{equation}
 Also,

\begin{equation}
\label{size-cyl}
\frac{\card{\scrC(n)\setminus A}}{\card{T(0, n) \setminus A}} \leq 8
\end{equation}
for all $n \geq 3$. Combining the inequalities \eqref{hook-cyl} and \eqref{size-cyl} proves the lemma. The relation among modified rates follows in the same way.
\end{proof}

Now we can prove Proposition \ref{mod-rate} for $z$ in the bottom row of $T(0, n)$.

\begin{proof}
Let $T$ be the stopping time when $(2t+1) n$ squares have been added to the tableau process $X_n$. Since the times of adding squares are the points of a rate $n$ Poisson process, it is easy to check that
$$
\prob (T<t) \leq e^{-L n}
$$ for some universal constant $L$.

Observe the naive bound that $w_X(z, X_n(t)) \leq n$ for all $n$. We can now use Lemma \ref{cyl-dom} together with the monotonicity of modified rates to get:
\begin{align*}
\expt \left[ \sup_{s \leq t}  v_X(z, X(s)) \right] &\leq \expt w_X(z, X_n(t)) \\
&\leq  \expt \left[ w_C(z, C_n(t)) \, \mathbf 1_{t < T}\right] + ne^{-L n} \\
&\leq \expt w_C(z, C_n(T))  + ne^{-L n}.
\end{align*}
Finally, using Proposition \ref{bounding-rate-in-exp} and the rotational symmetry of the cylinder process, we get that
$$\expt w_C(z, C_n(T))  + ne^{-L n} \leq 48((2t+1) + 1) + ne^{-L n} \leq K_1t + K_2,$$
completing the proof.
\end{proof}

Finally, we show that for any fixed $z' \geq z \in \HH$, that for large enough $n$, the modified rate for adding $z'$ to $X_n$ is always bounded by twice the modified rate for adding $z$. 
This extends Proposition \ref{mod-rate} to encompass all $z \in \HH$, and therefore completes the proof of Proposition \ref{rate-cor-2}.

\begin{lemma} Let $z' \geq z = (z_1, z_2)$ and for a downward closed subset $A \sset T(0, n)$ let $w(z, A)$ and $w(z', A)$  be the modified rates in $X_n$. Then
\begin{equation}
\label{asym-larg}
\lim_{n \to \infty} \left(\sup_{A \sset T(0, n)} \frac{w(z', A)}{w(z, A)} \right) = 1.
\end{equation}
Specifically, for all large enough $n$, we have that $w(z', X(t)) \le 2w(z, X(t))$ for all $t$.
\end{lemma}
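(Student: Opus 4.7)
The ratio naturally factors as
$$\frac{w(z',A)}{w(z,A)} \;=\; \frac{|T(0,n)\setminus(A\setminus S_z)|}{|T(0,n)\setminus(A\setminus S_{z'})|}\;\cdot\;\frac{P_{z'}(A)}{P_z(A)},$$
where $P_z(A) = \prod_{y\in R_z} f^z_y/(f^z_y-1)$. The plan is to show each factor tends to $1$ uniformly in $A$. By composing elementary steps, it suffices to treat the case $z'=z+(1,1)$ (the $z+(-1,1)$ case is symmetric by reflection), since any fixed $z'\ge z$ is reached from $z$ by finitely many such steps, and the asymptotic bound $\le 1+\epsilon$ is preserved under composition.

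For the prefactor ratio, I would first verify that for $z'=z+(1,1)$, the set $S_z\setminus S_{z'}$ equals $\{z\}$ together with the left up-diagonal $D_2^z$ from $z$, intersected with $T(0,n)$; this has size $O(n)$. Consequently the two residual sizes differ by at most $|A\cap(S_z\setminus S_{z'})| = O(n)$, while each dominates $|T(0,n)\cap S_{z'}|=\Theta(n^2)$. The prefactor ratio is therefore $1+O(1/n)$, uniformly in $A$.

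For the product ratio, decompose $R_z = D_1^z\sqcup D_2^z$ and $R_{z'} = D_1^{z'}\sqcup D_2^{z'}$ along the two diagonals from each point. A direct check gives $D_1^{z'}=D_1^z\setminus\{z'\}$ (shared diagonal), while $D_2^{z'}$ is the $(2,0)$-translate of $D_2^z$ up to one endpoint at each end. On $D_1^z\setminus\{z'\}$, the only cell of $S_z\setminus S_{z'}$ meeting the hook of $y$ is $z$ itself, so $f^{z'}_y\in\{f^z_y,f^z_y-1\}$; the log-ratio per factor is $O(1/(f^z_y)^2)$ and sums to a bounded quantity along the diagonal. Combined with the missing $z'$-factor in $P_z$, which contributes $(f^z_{z'}-1)/f^z_{z'}\le 1$, the $D_1$-part contributes at most $1+o(1)$.

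The main obstacle is the comparison of products over the distinct parallel diagonals $D_2^z$ and $D_2^{z'}$, uniformly in $A$. Pairing cells at equal height via the horizontal shift, one verifies that each hook of $y'\in D_2^{z'}$ intersects $S_z\setminus S_{z'}$ in exactly one cell of $D_2^z$, while hooks of $y\in D_2^z$ do not meet $S_{z'}\setminus S_z = \emptyset$. Hence the asymmetry between $A\setminus S_z$ and $A\setminus S_{z'}$ contributes at most one unit per paired factor, yielding a log-ratio of $O(1/(f^{z'}_{y'})^2)$, summable via Lemma \ref{hook-prob-inequality}. The unpaired boundary cells (an extra bottom cell in $D_2^z$, an extra top cell in $D_2^{z'}$) contribute an $O(1)$ factor from the bottom and a factor $1+O(1/n)$ from the top (heights $\Theta(n)$), both controlled by the prefactor analysis. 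Combining all estimates yields $\limsup_{n\to\infty}\sup_A w(z',A)/w(z,A)\le 1$, and the matching lower bound is achieved by the explicit configuration $A=T(0,n)\setminus S_{z'}$, for which both residual diagrams become single-corner cones and the ratio equals exactly $1$. In particular, $w(z',X(t))\le 2w(z,X(t))$ for all $t$ once $n$ is large enough.
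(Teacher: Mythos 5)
Your reduction to the single step $z' = z + (1,1)$ and your treatment of the prefactor, $\card{T(0,n)\setminus (A\setminus S_z)}/\card{T(0,n)\setminus (A\setminus S_{z'})} = 1 + O(1/n)$ uniformly in $A$, both match the paper and are sound, as is the lower-bound configuration $A = T(0,n)\setminus S_{z'}$ (both residual diagrams are then rectangles, whose corner hook probabilities are equal). The gap is in the comparison of the hook products. You bound each factor's log-ratio in absolute value and sum along the diagonals, but the estimates you actually obtain --- ``sums to a bounded quantity along the diagonal,'' ``an $O(1)$ factor from the bottom'' --- only give $\prod_{R_{z'}}(1+\tfrac{1}{f^{z'}_y-1}) \big/ \prod_{R_z}(1+\tfrac{1}{f^z_y-1}) \le e^{O(1)}$, a multiplicative constant, not $1+o(1)$. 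The jump from ``bounded'' to ``$1+o(1)$'' is a non sequitur, and it is not harmless: on $D_1^z\setminus\{z'\}$ every factor ratio is $\ge 1$ (when $z\in A$ the hook of $y=(z_1+k,z_2+k)$ loses exactly the cell $z$, so the $z'$-factor is the larger one), and when those hooks are as short as possible ($f^z_y = k+1$) the product of the ratios $\frac{1+1/(k-1)}{1+1/k}$ over $k\ge 2$ tends to $2$. A single compensating factor $(f^z_{z'}-1)/f^z_{z'}\le 1$ cannot cancel this. So factor-by-factor absolute bounds cannot yield $\limsup_n \sup_A \le 1$, nor even the weaker conclusion $w(z',\cdot)\le 2\,w(z,\cdot)$, unless the cancellation between the two diagonals is exhibited explicitly --- which your horizontal pairing of $D_2^z$ with $D_2^{z'}$ does not do.

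The paper sidesteps all of this with a global injection that captures the cancellation exactly: map $y' = (y_1+1, y_2+1)\in R_{z'}$ to $y = (y_1,y_2)\in R_z$, and observe that translation by $(1,1)$ carries the hook of $y$ in $(T(0,n)\setminus A)\cup S_z$ into the hook of $y'$ in $(T(0,n)\setminus A)\cup S_{z'}$, so $f^{z'}_{y'} \ge f^z_y$. Each factor indexed by $R_{z'}$ is then dominated by the factor of its image, the unmatched factors in $R_z$ are all $\ge 1$, and one obtains the exact inequality $\prod_{y\in R_{z'}}(1+\tfrac{1}{f^{z'}_y-1}) \le \prod_{y\in R_z}(1+\tfrac{1}{f^z_y-1})$ for every $n$ and every $A$; the only asymptotic input is the prefactor, which you already have. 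If you want to salvage your route, replace the horizontal pairing by this diagonal shift --- it is precisely the pairing under which every individual comparison goes the right way.
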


\begin{proof}  We only prove this in the case $z' = (z_1 + 1, z_2 +1)$, as the general case follows by symmetry and induction. Observe first that the supremum on the right hand side of \eqref{asym-larg} is at least 1 for every $n$, since $w(z', T(0, n)) = w(z, T(0, n))$ for all $n$. Also, it is easy to see that
$$\frac{\card{ T(0, n) \setminus A \cup S_z}}{\card{ T(0, n) \setminus A \cup S_{z'}}} \to 1$$
as $n \to \infty$, since $\card{S_z}/n^2 \to 1/4$ as $n \to \infty$, but $\card{S_z \symdif S_{z'}}/n \to 1/2$. Therefore to complete the proof it suffices to show that for any configuration $A \sset T(0, n)$, that

\begin{equation}
\label{hookprod}
\prod_{y \in R_{z'}} \left(1 + \frac{1}{f^{z'}_y -1}\right) \le \prod_{y \in R_z} \left(1 + \frac{1}{f^z_y -1} \right).
 \end{equation}

To prove this, let $y' = (y_1 + 1, y_2 + 1) \in R_{z'}$. It is clear that $y = (y_1, y_2)$ must be in $R_z$. Moreover, if $(x_1, x_2) \in H_y$ in the configuration $T(0, n) \setminus A \cup S_z$, then $(x_1 + 1, x_2 + 1) \in H_{y'}$ in the configuration $T(0, n) \setminus A \cup S_{z'}$. This gives an injective mapping of $R_{z'}$ into $R_z$ that does not decrease hook length, proving \eqref{hookprod}.
\end{proof}

\end{section}

\begin{section}{Regularity and Mixing of the Limit $F$}
\label{S:regularity}
In Theorem \ref{T:tableau-centre} we showed that the inclusion functions of random staircase Young tableaux have a limit $F$. In this section we establish regularity properties and mixing of $F$ using the results of Section \ref{S:rate}. 

\begin{prop}
\label{inject}
$F$ is almost surely injective.
\end{prop}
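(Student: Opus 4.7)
The plan is to show $\prob(F(z) = F(z')) = 0$ for each fixed pair of distinct points $z, z' \in \HH$, and then conclude by a countable union over all such pairs. The main ingredient is the expected rate bound from Proposition \ref{rate-cor-2}, combined with the strong Markov property of the jump process $X_n$.

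First I would fix distinct $z, z' \in \HH$ and $t > 0$, and derive the estimate
$$
\prob\bigl(|F_n(z) - F_n(z')| < \delta,\; F_n(z) \leq t,\; F_n(z') \leq t\bigr) \leq C(t)\,\delta
$$
uniformly for all sufficiently large $n$. The counting process $N_{z'}(s) = \mathbf{1}\{F_n(z') \leq s\}$ has compensator $\int_0^s v(z', X_n(r))\,\mathbf{1}\{z' \notin X_n(r)\}\,dr$, so optional stopping at the stopping time $F_n(z)$ yields
$$
\prob\bigl(F_n(z) \leq F_n(z') \leq F_n(z) + \delta,\; F_n(z) \leq t\bigr) \leq \expt\!\left[\int_0^\delta v\bigl(z', X_n(F_n(z)+r)\bigr)\,\mathbf{1}\{F_n(z) \leq t\}\,dr\right].
$$
Bounding the integrand by $\sup_{s \leq t+\delta} v(z', X_n(s))$ and invoking Proposition \ref{rate-cor-2} gives an upper bound of $\delta\bigl(K_1(t+\delta) + K_2\bigr)$; the symmetric argument with $z$ and $z'$ interchanged, together with the union of the two one-sided events, produces the claimed estimate.

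Next I would transfer the bound to the limit. By Theorem \ref{T:tableau-centre}, $F_n \to F$ in distribution in the product topology, so $(F_n(z), F_n(z')) \to (F(z), F(z'))$ in distribution on $[0,\infty)^2$ (noting that $F$ takes finite values almost surely). Applying the Portmanteau theorem to the open set $\{(a,b) : |a - b| < \delta,\; a < t,\; b < t\}$ gives
$$
\prob\bigl(|F(z) - F(z')| < \delta,\; F(z) < t,\; F(z') < t\bigr) \leq C(t)\,\delta.
$$
Letting $\delta \to 0$ and then $t \to \infty$ yields $\prob(F(z) = F(z')) = 0$, and a countable union over all pairs of distinct points in the countable set $\HH$ finishes the proof.

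The main conceptual obstacle, namely a uniform-in-$n$ control on the expected size of the jump rates $v(z', X_n(s))$ up to time $t$, has already been overcome by the modified-rate analysis of Section \ref{S:rate}; afterwards the argument is a routine application of Doob--Meyer compensators for the single-point counting processes $\mathbf{1}\{F_n(z) \leq \cdot\}$ combined with weak convergence in the product topology.
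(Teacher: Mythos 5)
Your proposal is correct and follows essentially the same route as the paper: fix a pair of points, stop the finite process $X_n$ at the inclusion time of the first point, bound the probability that the second point is added within time $\delta$ by the integral of its jump rate over that short interval, control that rate via Proposition \ref{rate-cor-2}, and pass to the limit by weak convergence. The paper phrases this as a proof by contradiction and splits off the event that the supremum of the rate is large, while you argue directly through the compensator and Portmanteau, but these are cosmetic differences.
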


\begin{proof}
Suppose not. Since there are only countably many pairs of points in $\half$, then there exists a pair $(z_1, z_2) \in \half^2$ with $\prob(F(z_1) = F(z_2)) = \delta > 0$. Then for any $\ep > 0$, there is some $N$ such that $\prob (|F_n(z_2) - F_n(z_1)| < \ep) \geq \frac{\delta}{2}$ for all $n \geq N$. Without loss of generality, we can remove the absolute values at the expense of a factor of $1/2$ to get
\begin{equation}
\label{close-rates}
\prob (0\le F_n(z_2) - F_n(z_1) < \ep) \geq \frac{\delta}{4}.
\end{equation}

Let $T$ be the stopping time when $z_1$ is added to the process $X_n$. The probability of adding $z_2$ in the interval $[T, T + \ep]$ is bounded by the integral of the rate in that interval. This gives that
\begin{align*}
\prob (0 \le F_n(z_2) - F_n(z_1) < \ep) &\leq \sup_{s \in [T, T + \ep]} v(z_2, X_n(t)) \\
&\leq Kt \ep + \prob(\sup_{s \leq t} v(z_2,  X_n(s)) \geq Kt) + \prob(T > t - \ep).
\end{align*}
By Proposition \ref{rate-cor-2} we can choose $K$ and $t$ large enough and independently of $\ep$ to make the last two terms on the right hand side arbitrarily small for all large enough $n$. Taking $\ep$ close to 0 then contradicts \eqref{close-rates}.
\end{proof}

\begin{corollary} 
\label{atom}
For each $z$, the distribution of $F(z)$ has no atoms. 
\end{corollary}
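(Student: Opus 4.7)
The plan is to derive the corollary from Proposition \ref{inject} (injectivity of $F$) and the translation invariance established in Theorem \ref{T:tableau-centre}. The intuition is that an atom in the distribution of $F(z)$ would, by translation invariance, automatically produce atoms at the same location $t_0$ in the distribution of $F(\tau^k z)$ for every $k \in \Z$. But injectivity forbids any two of these $F$-values from coinciding, so only one of the countably many events $\{F(\tau^k z) = t_0\}$ can occur; having infinitely many disjoint events of the same positive probability is impossible.

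Concretely, I would argue by contradiction. Suppose $F(z)$ has an atom, so there exists $t_0 \in [0, \infty)$ with $p := \prob(F(z) = t_0) > 0$. For each $k \in \Z$ set $A_k = \{F(\tau^k z) = t_0\}$. Translation invariance from Theorem \ref{T:tableau-centre} gives $F \eqd F \circ \tau$, hence $\prob(A_k) = p$ for every $k$. On the other hand, Proposition \ref{inject} states that $F$ is almost surely injective, so for any $k \neq l$,
$$
\prob(A_k \cap A_l) \;\leq\; \prob\bigl(F(\tau^k z) = F(\tau^l z)\bigr) \;=\; 0.
$$
Therefore the events $\{A_k\}_{k \in \Z}$ are pairwise disjoint modulo a null set, and
$$
1 \;\geq\; \prob\Bigl(\bigcup_{k \in \Z} A_k\Bigr) \;=\; \sum_{k \in \Z} \prob(A_k) \;=\; \sum_{k \in \Z} p,
$$
which forces $p = 0$, a contradiction.

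There is essentially no obstacle here: the corollary is a one-line consequence of the two preceding results. The only mild subtlety is ensuring that the null set on which injectivity fails can be chosen uniformly for all of the countably many pairs $(\tau^k z, \tau^l z)$, but this is automatic since a countable union of null sets is null. Note also that it is enough to use translation invariance (already available at this stage of the paper) rather than the mixing of $F$, which will only be proved later in Proposition \ref{asym-ind}.
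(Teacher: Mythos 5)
Your proof is correct, but it is not the argument the paper uses. The paper's proof of Corollary \ref{atom} simply reruns the rate-bound argument of Proposition \ref{inject}: instead of bounding the probability that $z_2$ is added within $\ep$ of the stopping time at which $z_1$ is added, one bounds the probability that $z$ is added in a window $[t, t+\ep]$ around a fixed deterministic time, using Proposition \ref{rate-cor-2} to control $\sup_{s\le t} v(z, X_n(s))$ and then letting $\ep \to 0$. Your argument instead treats the statement as a genuine corollary of two already-established facts: translation invariance of $F$ (Theorem \ref{T:tableau-centre}) makes the events $A_k = \{F(\tau^k z) = t_0\}$ equiprobable, and almost-sure injectivity (Proposition \ref{inject}) makes them pairwise disjoint modulo null sets, so countably many of them cannot each have positive probability. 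This is a clean, soft "stationarity plus a.s. distinctness" argument, with no circularity: both inputs are proved before Corollary \ref{atom} and neither relies on it. What the paper's route buys is uniformity of method (and implicitly a quantitative bound of the same form as in Proposition \ref{inject}, namely that $\prob(F(z)\in[t,t+\ep]) \lesssim (K_1t+K_2)\ep$ plus vanishing error terms); what your route buys is brevity and the fact that no further analysis of the rates is needed. The one subtlety you flag — that the null sets for the countably many pairs can be combined — is handled correctly.
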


\begin{proof}
The proof that $F$ has no atoms is the same as the proof that $F$ is almost surely injective, except instead of conducting the analysis at a stopping time $T$ when the square $z_1$ is added, we conduct it at a (deterministic) time $t$. 
\end{proof}

\begin{subsection}{The limit $F$ is mixing}
Recall that a measure $\mu$ is $k$-mixing with respect to a measure-preserving transformation $\tau$ if for any measurable sets $A_1, \ddd, A_k$,
\begin{equation*}
\lim_{m_1, \ldots, m_k \to \infty} \mu(A_1 \cap \tau^{-m_1}A_2 \cap \ldots \cap \tau^{-m_1 - m_2 - \ddd - m_k} A_k) = \prod_{i=1}^k \mu(A_i).
\end{equation*}
 Note that this proposition completes the proof of Theorem \ref{T:YT_limit} in the case $u =0$.

\begin{prop}
\label{asym-ind}
The limit $F$ is mixing of all orders with respect to the spatial shift $\tau$.
\end{prop}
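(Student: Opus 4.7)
The plan is to combine a $k$-fold extension of Lemma~\ref{stoch-dom-indep} with the elementary probabilistic fact that if $X\leq Y$ almost surely and $X \eqd Y$, then $X=Y$ almost surely. I reduce the mixing statement to the following: for any finite sets $K_1,\dots,K_k \sset \half$, the joint law of $F$ restricted to $\bigcup_j \tau^{M_j}K_j$ converges weakly to the product $\bigotimes_j F|_{K_j}$ as the minimum gap $\min_{i\neq j}|M_i - M_j|\to\infty$, using translation invariance from Theorem~\ref{T:tableau-centre} to identify the shifted marginals with the unshifted ones.

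For each configuration of shifts, I construct a coupling as follows. Choose $n_1$ much smaller than $\min_{i\neq j}|M_i-M_j|$, $n_2$ much larger than $n_1+\max_j|M_j|$, and $\theta$ slightly above $1$. Take $k$ independent tableau processes $X_j = X(2M_j, n_1, n_1)$ on pairwise disjoint diagrams $T(2M_j, n_1)$ together with a single larger process $X = X(0,n_2,\theta n_2)$ containing all of them. A $k$-fold version of Lemma~\ref{stoch-dom-indep} (provable by induction on $k$, since the rate comparison in Lemma~\ref{stoch-dom-general} applies to each $X_j$ individually and any $z$ outside the support of $X_j$ has rate zero in $X_j$) yields a coupling in which $\bigcup_j X_j(t)\sset X(t)$ at all times up to the stopping time when some $X_j$ fills an $\alpha$-fraction of its diagram. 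Equivalently, the inclusion functions satisfy $F_Y \geq F_X$ pointwise on $\bigcup_j T(2M_j,n_1)$. Sending $n_1, n_2\to\infty$ with the constraints intact and using Theorem~\ref{T:tableau-centre} together with Lemma~\ref{point-tight}, I pass to a subsequential joint limit of these couplings; the limit is a coupling of $F^{\mathrm{indep}}$ (a product of $k$ independent copies of $F$, one on each $\tau^{M_j}K_j$) with $F^M/\theta$ (the joint law of a single copy of $F$ on $\bigcup_j\tau^{M_j}K_j$, scaled by $1/\theta$), under which $F^{\mathrm{indep}}\geq F^M/\theta$ pointwise a.s., since $\{y\geq x\}$ is a closed set. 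A further subsequence sending $\theta\to 1$ produces a coupling of $F^{\mathrm{indep}}$ and $F^M$ with $F^{\mathrm{indep}}\geq F^M$ pointwise a.s.

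To conclude, I exploit stationarity: for every coordinate $z \in \tau^{M_j}K_j$, the marginal laws of $F^{\mathrm{indep}}(z)$ and of $F^M(z)$ both coincide with the law of $F(z)$. The elementary fact that $X\leq Y$ a.s.\ together with $X\eqd Y$ forces $X=Y$ a.s.\ --- which follows, with no moment assumption, from $\prob(X\leq t<Y)=\prob(X\leq t)-\prob(Y\leq t)=0$ for every $t$ --- then gives $F^{\mathrm{indep}}(z)=F^M(z)$ a.s.\ under the coupling for each coordinate $z$. Since there are only finitely many such coordinates, $F^{\mathrm{indep}}=F^M$ as random functions a.s., so their joint laws coincide, proving the required asymptotic independence. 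The main obstacle is arranging the four parameters $(n_1, n_2, \theta, \alpha)$ so that they tend to their limits simultaneously with Lemma~\ref{stoch-dom-indep} applying throughout and $\theta$ arbitrarily close to $1$; this is feasible in the regime $n_1\to\infty$, $n_1/\min_{i\neq j}|M_i-M_j|\to 0$, $n_2/n_1\to\infty$, and $\alpha\to 0$.
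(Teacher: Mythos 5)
Your proposal is correct, and it runs on the same engine as the paper's proof: the two‑scale approximation ($k$ independent small tableau processes on disjoint diagrams $T(2M_j,n_1)$ versus one large process on $T(0,n_2)$ sped up by $\theta>1$), together with the $k$‑fold version of Lemma~\ref{stoch-dom-indep}. Where you genuinely diverge is in how the one‑sided stochastic domination is converted into equality of laws. The paper stays at the level of probabilities of boxes: it derives the inequality \eqref{bd-one-way}, repeats it for the complements $A^c,B^c$ to get \eqref{bd-two-way}, and squeezes using the closeness of marginals ($\sigma_K$) and the no‑atoms estimate ($\alpha_{K,\gamma}$). You instead pass to a subsequential limit of the couplings themselves, obtaining a coupling of the product law with (a subsequential limit of) the joint law under which one dominates the other pointwise, and then invoke the elementary fact that a.s.\ domination together with exact equality of each one‑dimensional marginal (which holds by translation invariance of $F$ and translation covariance of the tableau dynamics, not merely asymptotically) forces a.s.\ equality. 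This is a softer and arguably cleaner finish that eliminates the paper's explicit error bookkeeping, at the price of managing the simultaneous limits $n_1,n_2\to\infty$, $\theta\to 1$, $\alpha\to 0$ subject to $n_1\ll\min_{i\neq j}|M_i-M_j|$ and $n_1+\max_j|M_j|\ll n_2$; as you note this is feasible, and one should additionally take $\alpha n_1\to\infty$ so that the stopping time in Lemma~\ref{stoch-dom-indep} diverges and the domination survives the limit. Both routes use the same supporting inputs (Theorem~\ref{T:tableau-centre}, Lemma~\ref{point-tight}, Corollary~\ref{atom} for the reduction to continuity boxes and the $\pi$--$\lambda$ argument).
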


We first present an outline of the proof that $F$ is 2-mixing. Fix $m$, and consider two sets $A^r$ and $B^r$ of the form
$$A^r =  \prod_{i \in T(0, m)} [0, a_i] \;\; \mathand  \;\; B^r = \prod_{i \in T(0, m)} [0, b_i],$$
and let 
$$ A = A^r \times \prod_{i \notin T(0, m)} [0, \infty) \;\;\mathand\;\; B = B^r \times \prod_{i \notin T(0, m)} [0, \infty).$$
By Dynkin's  $\pi-\la$ Theorem, it suffices to show that
\begin{equation}
\label{necessary-for-ergod}
\prob(F  \in A \cap \tau^{-K} B) \to \prob(F \in A)\prob(F \in B) \mathas K \to \infty,
\end{equation}
for any such $A$ and $B$. To show this, we will approximate the value of $F$ on $A \cap \tau^{-K} B$ in two different ways. Figure \ref{fig:2approx} illustrates the two approximations used. For the first approximation,  take two disjoint tableaux $T(0, \lfloor K/2 \rfloor)$ and $T(-K, \lfloor K/2 \rfloor)$ and run independent, rate-$\lfloor K/2 \rfloor $ tableau processes $Y_1$ and $Y_2$ on each of these tableaux. Let $G_{K, 1}$ and $G_{K, 2}$ be the inclusion functions for $Y_1$ and $Y_2$. For $K \gg m$, convergence of $G_{K, 1}$ and $G_{K, 2}$ to $F$ implies that $\prob(G_{K, 1} \in A)$ is very close to $\prob(F \in A)$, and similarly for $G_{K, 2}$ and $\tau^{-K}B$.

\medskip

For the second approximation, take $n \gg K$, and let $X_n$ be the rate-$n$ tableau process on $T(0, n)$ with inclusion function $F_n$. Since $n \gg K$, the convergence of $F_n$ to $F$ implies that 
$\prob(F_n \in A \cap \tau^{-K} B)$ is close to $\prob(F \in A \cap \tau^{-K} B)$, and that $\prob(F_n \in A)$ and $\prob(F_n \in \tau^{-K} B)$ are close to $\prob(F \in A)$ and $\prob(F \in B)$ respectively.

\medskip

Finally, we can use the domination Lemma \ref{stoch-dom-indep} to show that a small speed-up of $X_n$ dominates the union of the independent processes $Y_1$ and $Y_2$ up to a large stopping time. This in turn implies that up to a small error, 
$$\prob(F_n \in A \cap \tau^{-K} B) < \prob(G_{K, 1} \in \beta A)\prob(G_{K, 2} \in \beta \tau^{-K} B),$$ where $\beta$ is the value of the speed-up. Combining this with our previous relationships between probabilities implies that $\prob(G_{K, 1} \in A)\prob(G_{K, 2} \in \tau^{-K} B)$ must be very close to $\prob(F_n \in A \cap \tau^{-K} B)$. Passing to the limit in $n$ and then $K$ then proves that $F$ is 2-mixing, noting that 
$$\prob(G_{K, 2} \in \tau^{-K} B) \to \prob(F \in B) \mathas n \to \infty$$
by spatial stationarity.

\medskip

The general case can be proven using the same method, with the main difference being that in that case, we approximate the limit $F$ with $n$ disjoint independent tableau processes instead of 2. For simplicity, we only prove 2-mixing below.

\begin{figure}
   \centering
   \includegraphics[width=.9\textwidth]{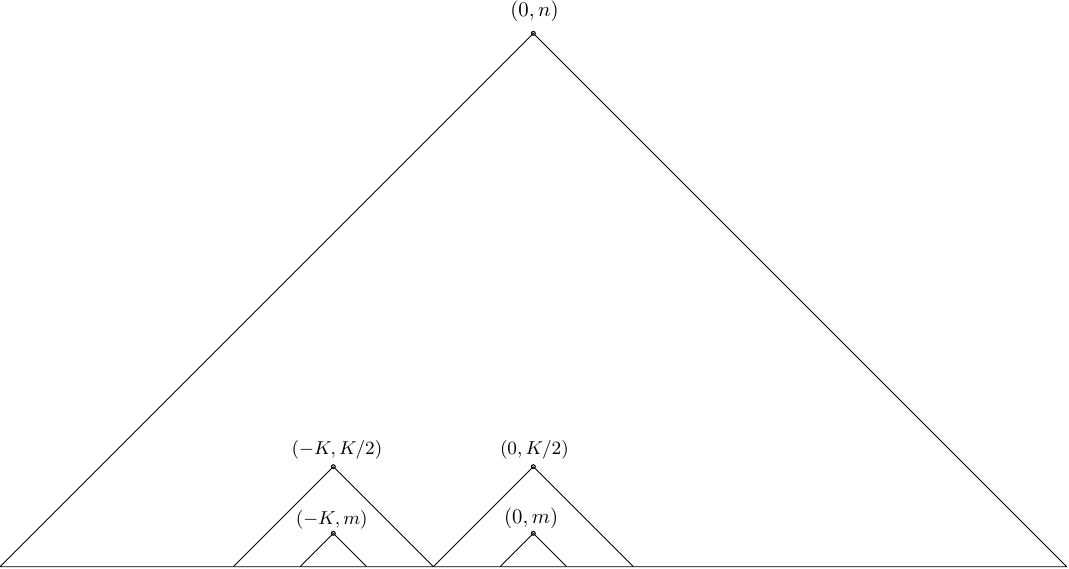}
   \caption{The two approximating processes used in the proof of Proposition \ref{asym-ind}. The first approximation pairs two disjoint processes on $T(0, \lfloor K/2 \rfloor)$ and $T(-K, \lfloor K/2 \rfloor)$ for $K \gg m$ and the second approximation takes a tableau process on $T(0, n)$ for $n \gg K$.}
   \label{fig:2approx}
\end{figure}

\begin{proof} The proof exactly follows the outline of what is stated above, but with precise bookkeeping regarding the error terms.  

%
%

With notation as in the outline, first note that it suffices to show that for large enough $n$,
\begin{equation}
\label{H-G}
|\prob(F_n \in A \cap \tau^{-K} B) - \prob(G_{K, 1} \in A)\prob(G_{K, 2} \in \tau^{-K} B)| < \ep_K
\end{equation}
where $\ep_K \to 0$ as $K \to \infty$. To see that \eqref{H-G} implies \eqref{necessary-for-ergod}, let 
$$
|\prob(F \in A)\prob(F \in B) - \prob(G_{K, 1} \in A)\prob(G_{K, 2} \in \tau^{-K}B)| = \delta_K.
$$
 Taking $n \to \infty$ in \eqref{H-G} and replacing $G_{K, 1}$ and $G_{K, 2}$ with $F$, we get that
\begin{equation*}
|\prob (F \in A \cap \tau^{-K} B) - \prob(F \in A)\prob(F \in B)| \leq \ep_K + \delta_K.
\end{equation*}
We can pass to the limit in $F_n$ since $A \cap B$ is a set of continuity of $H$ by Corollary \ref{atom}. Moreover, $\delta_K \to 0$ as $K \to \infty$ since $A$ and $B$ are sets of continuity of $F$ by the same corollary and using the spatial stationarity of $F$.

\medskip

Now let $\gamma > 0$, define $$\beta =  \frac{K + 1}{(1- \gamma)(K - 4)},$$ and let
\begin{equation*}
\label{eqn-3L}
\alpha_{K, \ga} = \max \left\{\prob\left(F(i) \in \left[c, \beta c \right] : c \in \{a_i, b_i\}\right)\right\}.
\end{equation*}
We have chosen $\beta$ in a way so that if $n$ is large enough, then the tableau process $X^\beta_n$ on the tableau $T(0, n)$ with speed $\beta n$ stochastically dominates the independent coupling of the tableau processes $Y_1$ and $Y_2$ up to time $T_\ga$. Here $T_\ga$ is the time when either $\ga {\lfloor K/2 \rfloor \choose 2}$ squares have been added to $Y_1$ or $\ga {\lfloor K/2 \rfloor \choose 2}$ squares have been added to $Y_2$. This can be seen by comparing with the condition in Lemma \ref{stoch-dom-indep}. 

\medskip

Therefore letting $M = \max \{a_i, b_i\}$, we have
$$\prob(G_{K, 1}\in A)\prob(G_{K, 2} \in \tau^{-K} B) < \prob\left(\frac{F_n}{\beta} \in A \cap \tau^{-K}B\right) + \prob(T_\gamma < M).$$
Moreover, we have that for all large enough $n$, 
\begin{align*}
\prob\left(\frac{F_n}{\beta} \in A \cap \tau^{-K} B \right) &= \prob\left(F_n \in\beta A \cap \tau^{-K} \beta B \right)\\
&< \prob(F_n \in A \cap \tau^{-K} B) + 2m(m-1)\al_{K, \ga}.
\end{align*}
Here $\beta A = \{ \beta x : x \in A \}$, and similarly for $B$. For the above inequality to hold, $n$ just needs to be large enough so that 
$$
\max \big\{\prob\left(F_n(i) \in \left[c, \beta c \right] : c \in \{a_i, b_i\}\right)\big\} < 2 \al_{K, \ga}.
$$ 
Combining the above two inequalities, we get that 
\begin{equation}
\label{bd-one-way}
\prob(G_{K, 1}\in A)\prob(G_{K, 2} \in \tau^{-K} B) < \prob(F_n \in A \cap \tau^{-K} B) + 2m(m-1)\al_{K, \ga} + \prob(T_\gamma < M).
\end{equation}
We can similarly get that
\begin{equation}
\label{bd-two-way}
\prob(G_{K, 1}\in A^c)\prob(G_{K, 2} \in \tau^{-K} B^c) > \prob(F_n \in A^c \cap \tau^{-K} B^c) - 2m(m-1)\al_{K, \ga} - \prob(T_\gamma < M).
\end{equation}
Finally, let 
$$
\sig_K = \max\{|\prob(G_{K, 1}\in A) - \prob(F \in A)|,  |\prob(G_{K, 2}\in \tau^{-K}B) - \prob(F \in \tau^{-K} B)|\}.
$$ 
For large enough $n$, we have that 
\begin{equation}
\label{close-marg}
|\prob(F_n \in A) - \prob(G_{K, 1} \in A)| < 2\sig_K \mathand |\prob(F_n \in \tau^{-K}B) - \prob(G_{K, 1} \in \tau^{-K}B)| 
\end{equation}
since $A$ and $B$ are sets of continuity of $F$. This similarly holds for $A^c$ and $B^c$. Therefore
 \begin{align*}
\big|\big(\prob(G_{K, 1} \in A)\prob(G_{K, 2} \in \tau^{-K} B)& - \prob(G_{K, 1} \in A^c)\prob(G_{K, 2} \in \tau^{-K} B^c)\big)\\
 - &\big(\prob(F_n \in A \cap \tau^{-K} B) - \prob(F_n \in A^c \cap \tau^{-K} B^c)\big) \big| < 4 \sig_K.
 \end{align*}
 Combining this bound with \eqref{bd-one-way} and \eqref{bd-two-way} gives that for all large enough $n$,
 \begin{align}
 \label{almost}
 \begin{split}
\big |(\prob(G_{K, 1} \in A)\prob(G_{K, 2} \in \tau^{-K} B) &- \prob(F_n \in A \cap \tau^{-K} B)\big|\\
& < 4m(m-1)\al_{K, \ga} + 2\prob(T_\ga < M) + 4 \sig_K.
\end{split}
 \end{align}
 
Now note that for any fixed value of $\ep > 0$, as $K \to \infty$, we can choose a sequence $\ga_K \to 0$ such that $\prob(T_{\ga_K} > M) < \ep$. With this sequence of $\ga_K$s, $\al_{K, \ga_K} \to 0$ as $K \to \infty$. 

\medskip
Noting also that $\sig_K \to 0$ as $K \to \infty$, this shows that the left hand side of \eqref{almost} tends to 0 as $n \to \infty$. 
\end{proof}

%
%
%
%
%
%
%

\end{subsection}

\begin{subsection}{Inclusion times for squares in the bottom row}
We can also use the rate bound to get a lower bound on the probability that it takes a long time to add any given square in the bottom row. Note that by spatial stationarity of the limit $F$, it suffices to prove this for the square $z_0 = (1, 1)$. We can then combine this with the mixing property of $F$ to show that at any time infinitely many squares have not been added.

\medskip

 The idea here is to modify the process $X_n$ to create a new process $Y_n$. $Y_n$ will be  $X_n$, but with the hook probabilities modified so that $Y_n$ never adds $z_0$. We will then show that $X_n$ and $Y_n$ can always be coupled so that at any time $t$ they are equal with positive probability $P$ independent of $n$.
\bigskip

\noindent {\bf The construction of $Y_n$}. $Y_n$ is a Markov process with the same state space as the tableau process $X_n$, namely:
$$\{ A \sset T(0, n) : A \text{   is downward closed} \}.$$
If $A = Y_n(t)$, and  $z$ is corner of $T(0, n) \setminus A$ with $z \neq z_0$, then we add the point $z$ to $Y_n$ with rate

$$n \frac{\hookp(T(0, n) \setminus A, z)}{1 - \hookp(T(0, n) \setminus A, z_0)}.$$
In words, the rates in $Y_n$ for squares that can be added are given by the rates in $X_n$ times
\begin{equation}
  \label{XnYnrates}
  \frac{1}{1 - \hookp(T(0, n) \setminus A, z_0)}.
\end{equation}
Note that this only makes sense as long as there are squares other than $z_0$ that can be added to $Y_n$. Once $z_0$ is the only square left that can be added, we can define $Y_n$ so that nothing happens past that point. We first show that $Y_n$ is dominated by a sped-up version of $X_n$. Note that the total rate of jumps from any non-terminal state in $Y_n$ is exactly $n$.

\begin{lemma} \label{X2domY}
Let $M < \frac{n(n-1)}{128}$, and let $T_{M}$ be the stopping time when $M$ squares have been added to $Y_n$. Then letting $X^2_n = X(0, n, 2n)$ be a tableau process on $T(0, n)$ with speed $2n$, we have that $Y_n \sleq X^2_n$ up to time $T_M$.
 \end{lemma}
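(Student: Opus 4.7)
The plan is to invoke Lemma~\ref{stoch-dom-criterion} with the set $\scrS = \{A : |A| < M\}$, so that the exit time there is precisely $T_M$. Combined with Lemma~\ref{stoch-dom-1}, this reduces the claim to verifying $v_{Y_n}(z, A) \leq v_{X_n^2}(z, A)$ at every state $A$ with $|A| < M$ and every corner $z$ of $T(0, n) \setminus A$. The case $z = z_0$ is trivial since $v_{Y_n}(z_0, A) = 0$ by construction; for $z \neq z_0$, comparing \eqref{XnYnrates} with the definition of $X_n^2$ reduces the inequality to the purely combinatorial bound
\[
  \hookp(T(0, n) \setminus A, z_0) \leq 1/2,
\]
which is what the rest of the proof establishes.

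The first step is to bound the maximum height $\beta$ attained by a square in $A$. Since $A$ is downward closed, it contains the full downset in $T(0, n)$ of any point it contains; a direct level-by-level count shows that this downset has exactly $\beta(\beta + 1)/2$ squares for a point at height $\beta$. Combined with $|A| < n(n - 1)/128$ this forces $\beta < n/8$. (For the small values of $n$ where $M = 0$ there is nothing to prove.)

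The second step is to bound $q = \prod_{y \in R_{z_0}} h_y^A/(h_y^A - 1)$, since $\hookp$ equals $q/(|T(0, n)| - |A|)$ and $|T(0, n)| - |A| \geq 63 n(n - 1)/128$. The key observation is that downward closedness of $A$ together with $z_0 \notin A$ implies that $A$ is disjoint from the entire upset of $z_0$ in $\HH$. For each $y \in R_{z_0}$ at height $y_c$, its hook $H_y$ consists of two diagonal arms from $y$; one arm passes through $z_0$ and therefore lies in the upset of $z_0$, hence is disjoint from $A$, while the other has $y_c - 1$ points at heights $1, \dots, y_c - 1$, only at most $\beta$ of which can lie in $A$. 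This yields $|H_y \cap A| \leq \min(y_c - 1, \beta)$ and hence $h_y^A \geq \max(y_c,\, 2 y_c - 1 - \beta)$. Splitting the product for $q$ at $y_c = \beta + 1$ then gives a telescoping bound $(\beta + 1)^2$ for the low-height part and a bound from Lemma~\ref{hook-prob-inequality} for the high-height part; multiplying and using $\beta + 1 \leq n/8$ gives $q \leq (\beta + 1)(n - 1 - \beta) \leq 7 n^2/64$, from which $\hookp \leq 2n/(9(n - 1)) < 1/2$ follows.

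The main obstacle is making the bound on $q$ tight enough. A direct application of Lemma~\ref{maxheight} only yields $q \leq 2 n (\beta + 1)$, and with $\beta + 1 \leq n/8$ this translates to $\hookp \leq 32 n/(63(n - 1))$, which sits just above $1/2$. The required refinement is precisely the \emph{protected arm} observation above: for every $y \in R_{z_0}$ one of the two arms of $H_y$ is forbidden from intersecting $A$ at all, which halves the effective contribution of $y$ to $q$ compared to the generic bound and is what ultimately pushes $\hookp$ below $1/2$.
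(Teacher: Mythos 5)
Your proposal is correct, and its overall architecture is the same as the paper's: reduce via \eqref{XnYnrates} and Lemmas \ref{stoch-dom-criterion} and \ref{stoch-dom-1} to the bound $\hookp(T(0,n)\setminus A, z_0)\le 1/2$, then establish that bound by first controlling the maximal height $\beta$ of $A$ and then the product over the reverse hook of $z_0$. The one substantive difference is in the last step, and here your instinct is vindicated: the paper simply invokes the remark after Lemma \ref{maxheight} to write $\hookp \le \frac{1}{\binom{n}{2}-M}\cdot 2n\cdot\frac n8$, and with $\binom{n}{2}-M \ge \frac{63}{128}n(n-1)$ this evaluates to $\frac{32n}{63(n-1)}$, which is strictly greater than $1/2$ for every $n$ — so the displayed inequality in the paper's proof is, as literally written, false, and your sharper product bound (or, equivalently, shrinking the threshold on $M$, or taking a slightly larger speed-up than $2$) is genuinely needed to close the argument. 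Your computation of the refinement is sound: the protected arm gives $h_y^A \ge \max(y_c,\,2y_c-1-\beta)$, the split at height $\beta+1$ gives $(\beta+1)^2$ for the low part and a telescoping factor of roughly $\frac{n-\beta}{\beta+1}$ for the high part, and the resulting $q \lesssim (\beta+1)(n-\beta)\le \tfrac{7}{64}n^2$ pushes $\hookp$ comfortably below $1/2$ for all $n$ with $M\ge 1$.

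One small correction of attribution: the ``protected arm'' is not an idea beyond Lemma \ref{maxheight} — it is exactly what produces the $(s-\beta)^+$ term there, since that lemma bounds the \emph{modified} rate, i.e.\ the rate computed from $A\setminus S_z$, and the arm of $H_y$ ending at $z$ lies entirely in $S_z$. The actual source of your factor-of-two gain over the paper's $2n(\beta+1)$ is geometric: in the staircase $T(0,n)$ the reverse hook of a bottom-row square near the center only reaches height about $n/2$, whereas the cylinder bound of Lemma \ref{maxheight} is computed with heights running up to $n-1$. Redoing the Lemma \ref{maxheight} computation with the correct height range for the staircase yields about $(\beta+1)\,n$ rather than $2n(\beta+1)$, which is precisely the improvement your argument delivers.
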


\begin{proof} Suppose that $A$ is some configuration with fewer than $\frac{n(n-1)}{128}$ points added. The maximum height of $A$ is bounded by $\frac{n}{8} - 1$, since any square of height $\frac{n}{8} - 1$ lies above a triangle with $\frac{n(n-1/8)}{128}$ squares. By the remark following Lemma \ref{maxheight} this implies a  bound on the hook probabilities, namely
$$
\hookp(T(0, n) \setminus A, z_0) \leq \frac{1}{{n \choose 2} - M} 2 n \frac{n}{8} < \frac{1}{2}
$$
for $n \geq 3$. Then by \eqref{XnYnrates} we have domination of the rates of $Y_n$ by those in $X^2_n$. Lemmas \ref{stoch-dom-criterion} and \ref{stoch-dom-1} (more precisely, the proofs of those lemmas,) then imply stochastic domination.
\end{proof}
Now we couple $X_n$ and $Y_n$ to bound the probability of adding $z_0$.
\begin{prop}
\label{prob-not-added}
There exist constants $K$ and $L$ such that for any $t>0$

$$\prob\{F(z_0) > t \} \geq e^{-Kt -L t^2}.$$

\end{prop}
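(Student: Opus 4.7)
The plan is to construct a coupling of $X_n$ and $Y_n$ on a common probability space such that the event $\{X_n = Y_n \text{ at time } t\}$ coincides with $\{F_n(z_0) > t\}$, and then to lower bound this probability by $e^{-Kt-Lt^2}$ uniformly in $n$; passing to the limit $n \to \infty$ will yield the claim for $F$.  Drive both processes by a common rate-$n$ Poisson tick process.  At each tick, provided $X_n$ and $Y_n$ are still in the same state $A$, toss an auxiliary coin with heads probability $p := \hookp(T(0,n)\setminus A, z_0)$: on heads, $X_n$ adds $z_0$ and the coupling breaks; on tails, both processes add the same square $z' \neq z_0$ chosen with probability $\hookp(T(0,n)\setminus A, z')/(1-p)$.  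A short verification shows that this gives $X_n$ and $Y_n$ their correct marginal laws, and that, conditional on the entire path of $Y_n$, the coin flips are independent Bernoulli trials with success probabilities $p_i := \hookp(T(0,n)\setminus Y_n(s_i^-), z_0)$ at the jump times $s_i$ of $Y_n$.  Consequently
\[
\prob(F_n(z_0) > t \mid Y_n) \;=\; \prod_{s_i \leq t}(1 - p_i).
\]

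The next step is to turn this product into an exponential bound.  Let $E := \{\max_{s_i \leq t} p_i \leq 1/2\}$, so that on $E$ we have $\log(1 - p_i) \geq -2p_i$ and hence $\prob(F_n(z_0) > t \mid Y_n) \geq \exp(-2\sum p_i)$.  By Campbell's formula for the rate-$n$ tick process,
\[
\expt \sum_{s_i \leq t} p_i \;=\; \int_0^t \expt\, v(z_0, Y_n(s))\, ds.
\]
Since $Y_n$ is downward-closed and never contains $z_0$, we have $Y_n(s) \cap S_{z_0} = \emptyset$, and hence $v(z_0, Y_n(s)) = w(z_0, Y_n(s))$ for the modified rate $w$ of Section \ref{S:rate}.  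Lemma \ref{X2domY} gives $Y_n \sleq X_n^2$ up to a stopping time that exceeds $t$ with probability tending to $1$ as $n \to \infty$; combined with the monotonicity of $w$ in its second argument this yields $\expt w(z_0, Y_n(s)) \leq \expt w(z_0, X_n^2(s))$.  Applying Proposition \ref{mod-rate} to $X_n^2$ (which at time $s$ is distributed as $X_n$ at time $2s$) bounds the right-hand side by $2K_1 s + K_2$, so $\expt \sum_{s_i \leq t} p_i \leq K_1 t^2 + K_2 t$.

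Combining the two displays with Jensen's inequality restricted to $E$, and using Markov's inequality with Proposition \ref{rate-cor-2} to show $\prob(E) \to 1$, gives
\[
\prob(F_n(z_0) > t) \;\geq\; (1 + o(1))\,\exp\!\bigl(-2K_1 t^2 - 2K_2 t\bigr)
\]
as $n \to \infty$.  Corollary \ref{atom} makes every $t>0$ a continuity point of the law of $F(z_0)$, so the weak convergence $F_n \to F$ of Theorem \ref{T:tableau-centre} transfers this bound to $F$, completing the proof after relabeling constants.  The main technical subtlety is handling the Jensen step carefully on $E$ so that the factor $\prob(E)$ gets absorbed into $1 + o(1)$; also essential is the observation that one must work with the monotone modified rate $w$ rather than with $v$ itself, using the equality $v=w$ on downward-closed states not meeting $S_{z_0}$ to push the domination $Y_n \sleq X_n^2$ through the rate bound.
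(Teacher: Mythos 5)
Your proposal is correct and follows essentially the same route as the paper: the same coupling of $X_n$ with the conditioned process $Y_n$, the same reduction of $\prob(F_n(z_0)>t)$ to an expected product of $(1-p_i)$ along the path of $Y_n$, and the same use of Lemma \ref{X2domY}, the monotone modified rate, and Proposition \ref{mod-rate} to bound the accumulated hook probabilities, with Corollary \ref{atom} to pass to the limit. The only difference is cosmetic bookkeeping: the paper bounds the product at a fixed number of steps $M=2tn$ via monotonicity of the hook probabilities and Jensen applied to $(1-x)^M$, whereas you use $1-p\geq e^{-2p}$ on the high-probability event $E$ together with Campbell's formula; both yield the same $e^{-Kt-Lt^2}$ bound.
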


\begin{proof} Couple $X_n$ and $Y_n$ so that they add squares at the same times (we can do this since the total rate of exiting non-absorbing states in $X_n$ and $Y_n$ is $n$), and add the same squares until the time when $X_n$ adds square 0. Now let $M \in \nat$, and for $m \in \nat$ let $T_m$ be the stopping time when the $m$th square is added to $X_n$. Let $\scrM$ be the set of maximal sequences $\{A_0 = \emptyset \sset A_1 \ddd \sset A_M\}$ of downward closed subsets of $T(0,n)$  such that $z_0 \notin A_M$.  Then we have
$$
\prob(X_n(t) = Y_n(t)) \geq \prob(T_M \geq t)\sum_{\{A_m\} \in \scrM} \prob(X(T_m) = A_m \mathforall m \leq M) $$
Using the transition probabilities for $Y_n$ the sum above can be written as
$$\sum_{\{A_m\} \in \scrM} \prob(Y(T_m) = A_m \mathforall m \leq M) \prod_{m=1}^{M} (1 - \hookp( T(0, n) \smin A_{m-1}, z_0   )).
$$
We may write this as an expectation
$$\expt \prod_{m=1}^{M} \big(1 - \hookp( T(0, n) \smin Y_n(T_{m-1}) ,z_0   )\big)\ge \expt \big(1 - \hookp( T(0, n) \smin Y_n(T_{M}), z_0   )\big)^M.
$$
The
 inequality follows since the probabilities are monotone. 
By Jensen's inequality we get the lower bound
$$
\big(1 - \expt \hookp( T(0, n) \smin Y_n(T_{M}), z_0   )\big)^M
$$
We use Lemma \ref{X2domY} to bound the expectation above. Assume 
$M \leq \frac{n(n-1)}{128}$, then $X^2_n(T_M)$ stochastically dominates $Y_n(T_M)$, that is in some coupling $X^2_n(T_M)\ge Y_n(T_M)$, and since $z_0\notin Y_n(T_M)$, we have $X^2_n(T_M)\smin S_{z_0}\ge Y_n(T_M)$, where $S_{z_0}$ is the set of squares that are greater than $z_0$ in the partial order. By monotonicity of the rates we have
$$
\expt \hookp( T(0, n) \smin Y_n(T_{M}), z_0   ) \le \expt \hookp( T(0, n) \smin (X^2_n(T_{M})\setminus S_{z_0}), z_0   ).
$$
We can bound the rates in $X^2_n \smin S_{z_0}$ at some fixed time $s=2M/n$ by Proposition \ref{mod-rate} from the previous section. Here note that the rate of adding $z_0$ to $X^2_n \smin S_{z_0}$ is the modified rate of adding $z_0$ to $X_n$.  We get the upper bound
$$
\expt \hookp( T(0, n) \smin (X^2_n(s)\setminus S_{z_0}), z_0   ) + \prob(T_M>s) \le \frac{K_1}{n}+\frac{K_2 M}{n^2} + e^{-K_3n},
$$
where bound on $\prob(T_M > s)$ follows from the tail probabilities of the Poisson distribution. Monotonicity of the rates implies that this is also an upper bound for $\expt \hookp( T(0, n) \smin (X^2_n(T_{M})\setminus S_{z_0}), z_0)$.
Putting everything together and setting $M=2tn$, we get for large enough $n$
\[
\prob(X_n(t) = Y_n(t)) \geq \prob(T_M > t)\left(1 - \frac{K_1+2 K_2 t}{n}-e^{-K_3n}\right) ^{2tn}.
\]
Letting $n\to\infty$ gives that 
$$\lim_{n \to \infty} \prob(X_n(t) = Y_n(t)) \geq e^{-Kt - Lt^2},$$
for some constants $L$ and $K$. Using that $F(z_0)$ has a continuous distribution (Corollary \ref{atom}) then finishes the proof.
\end{proof}

The mixing of $F$ combined with Lemma \ref{prob-not-added} implies that at any time, a bi-infinite sequence of squares has not been added. This is a direct consequence of the fact that mixing implies ergodicity.

\begin{corollary}
\label{cor-spaces}
For any time $t$, there are almost surely infinitely many values of $x > 0$ and infinitely many values of $x < 0$ such that $F(x, 1) > t$.
\end{corollary}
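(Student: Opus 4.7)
The plan is to combine the positive lower bound on the tail of $F(z_0)$ from Proposition \ref{prob-not-added} with the spatial mixing of $F$ established in Proposition \ref{asym-ind}, via the ergodic theorem.

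First, fix $t > 0$ and set $p := \prob(F(1,1) > t)$. By Proposition \ref{prob-not-added}, $p \geq e^{-Kt - Lt^2} > 0$. Next, by the spatial stationarity of $F$ under the shift $\tau(x,y) = (x+2, y)$ (which is part of Theorem \ref{T:tableau-centre}), for every odd integer $x$ we have $\prob(F(x,1) > t) = p$.

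Now, Proposition \ref{asym-ind} asserts that $F$ is mixing (of all orders) with respect to $\tau$; in particular, $\tau$ is an ergodic measure-preserving transformation on the probability space of $F$. Define the indicator random variable $\xi := \mathbf{1}\{F(1,1) > t\}$, so that $\xi \circ \tau^{k}(F) = \mathbf{1}\{F(2k+1,1) > t\}$. Then $\E \xi = p > 0$, and Birkhoff's pointwise ergodic theorem yields
\[
\frac{1}{N} \sum_{k=0}^{N-1} \mathbf{1}\{F(2k+1,1) > t\} \;\xrightarrow[N\to\infty]{}\; p > 0 \qquad \text{almost surely.}
\]
In particular, the set $\{k \geq 0 : F(2k+1,1) > t\}$ is almost surely infinite. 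Applying the same argument to the inverse shift $\tau^{-1}$ (which is also ergodic and measure-preserving) gives that $\{k < 0 : F(2k+1,1) > t\}$ is almost surely infinite as well. Intersecting the two almost sure events finishes the proof.

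The only potentially subtle point is invoking Birkhoff: one must check that the underlying probability space on which $\tau$ acts is standard enough, but since $F$ takes values in a Polish space ($[0,\infty)^{\HH}$ with the product topology) and $\tau$ is a measurable bijection preserving the law of $F$, this is immediate. No other obstacle arises.
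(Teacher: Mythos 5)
Your proof is correct and follows essentially the same route as the paper, which simply notes that Proposition \ref{prob-not-added} gives $\prob(F(1,1)>t)>0$ and that mixing implies ergodicity, so a bi-infinite sequence of unadded squares exists almost surely. Your write-up merely makes the ergodic-theorem step (Birkhoff applied to $\tau$ and $\tau^{-1}$) explicit, which is a faithful expansion of the paper's one-line argument.
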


\end{subsection}

\begin{section}{Sorting Networks at the Center}
\label{S:llcentre}
Now we are finally in a position to prove the existence of the local limit of random sorting networks at the center. Let $\scrA$ be the space of swap functions. Define $\scrI$ to be the set of all functions $G: \half \to [0, \infty]$ such that the following two conditions hold. 

\medskip

i) Let $B = \{z \in \half : G(z) \neq \infty\}$. Then $G|_B$ is order-preserving and injective.

ii) For any $t$, we have that  $G(x, 1) > t$ for infinitely many $x> 0$ and $x < 0$.

\medskip

\noindent We will define a map $\oper{EG}: \scrI \to \scrA$ which will generalize the Edelman-Greene bijection. To do this we first define swap functions $\oper{EG}_t(G)$ for every $t > 0$. These swap functions will be $\oper{EG}(G)$ defined up to time $t > 0$. Consider the set of points $$A = \{z : G(z) \leq t\} \sset \half.$$ Since $G(x, 1) > t$ for infinitely many $x> 0$ and $x < 0$ and $G$ is order-preserving on $\half$, $A$ breaks down into infinitely many finite downward closed sets $A_i$ such that each $A_i$ lies in some $T(\ell_i, k_i)$ and the sets $T(\ell_i, k_i)$ are disjoint. We can then define the swap function on each $T(\ell_i, k_i)$ individually up to time $t$ using the regular Edelman-Greene bijection on that diagram, since these swap functions don't interact before time $t$ and $G|_B$ is order-preserving and injective.

\medskip

Now define the process $\oper{EG}(G)$ by letting 
$$\oper{EG}(G)(x, r) = \oper{EG}_t(G)(x, r), $$
where $t$ is any time greater than $r$. This is well-defined since for $r < s < t$, $\oper{EG}_t(G)(x, r) = \oper{EG}_s(G)(x, r)$.

\medskip

It is easy to see that $\oper{EG}$ is continuous on $\scrI$, by checking that $\oper{EG}_t$ is continuous for all $t$. This is clear since if $G_n \to G$ in $\scrI$, for any subset $T(\ell_i, k_i) \sset   \half$, eventually $G_n$ will be identically ordered to $G$ on $T(\ell_i, k_i)$ and so the ordering of the swaps given by the Edelman-Greene bijection will be the same for $G_n$ and $G$ on $T(\ell_i, k_i)$. Moreover, the times at which these swaps occur converge in the limit. This implies convergence of both the cadlag paths of the permutation $\oper{EG}(G_n)(\cdot, x)$ and the cadlag paths of the inverse permutation, thus showing that $\oper{EG}$ is continuous.

\medskip

Finally, by Corollary \ref{cor-spaces} and Proposition \ref{inject}, we know that our tableau process limit $F \in \scrI$ almost surely, so 
$$
U_n = \oper{EG}(F_n) \to U = \oper{EG} (F)
$$
 in distribution as well by the continuity of the map $\oper{EG}$. This proves convergence of random sorting networks at the center to a swap process $U$. The only thing left to do to prove Theorem \ref{T:main} when $u=0$ is to show that the limit $\oper{EG}(F)$ has time-stationary increments, as the spatial stationarity and mixing follow from the spatial stationarity and mixing of $F$.

\begin{prop}
\label{time-stat} $U$ has time-stationary increments. Namely, the distribution of the process  $(U(\cdot,s)^{-1}U(\cdot,s+t)), t\ge 0)$ does not depend on $s$.
\end{prop}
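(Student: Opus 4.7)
The plan is to leverage the Markov property of the finite random sorting network together with the convergence $U_n \cvgd U$ established in Section \ref{S:llcentre}. Fix $s > 0$ and define the Poissonized process $\tilde\sigma^n_t := \sigma^n_{N_n(t)}$, where $N_n$ is a rate-$n$ Poisson process; by the law of large numbers, $\tilde\sigma^n$ has the same local scaling limit $U$ around $k_n$ as $U_n$. By the Markov property of the uniform random sorting network, conditional on $\tilde\sigma^n_s = \pi$, the increment process $V^n_t := (\tilde\sigma^n_s)^{-1} \tilde\sigma^n_{s+t}$ for $t \geq 0$ is distributed as a Poissonized uniform random reduced decomposition of the permutation $\pi^{-1}\rev_n$, started from the identity, with a rate-$n$ Poisson clock.

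Let $V^n(x, t)$ denote the local scaled swap process around $k_n$ associated to this increment, defined analogously to $U_n$. Since $V(\cdot, t) := U(\cdot, s)^{-1} U(\cdot, s+t)$ is the distributional limit of $V^n(\cdot, t)$ (by the continuous mapping theorem, using $U_n \cvgd U$), it suffices to prove $V^n \cvgd U$; this then yields the required identity in law $(V(\cdot, t))_{t \geq 0} \eqd (U(\cdot, t))_{t \geq 0}$.

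The core task is to establish $V^n \cvgd U$. The strategy is to adapt the Young tableau and Edelman--Greene arguments of Sections \ref{S:convergence}--\ref{S:llcentre} to the increment process. A uniform random reduced decomposition of $\pi^{-1}\rev_n$ corresponds, via a suitable generalization of the Edelman--Greene correspondence, to a uniform random reverse standard Young tableau on a (possibly skew) shape $\lambda(\pi)$ that agrees with the full staircase $T(0, n)$ outside a region localized near the positions where $\pi$ has inversions. Since $\pi = \tilde\sigma^n_s$ has $O(n)$ inversions in total but, in the scaling limit, only finitely many in any fixed local window around $k_n$, the shape $\lambda(\pi)$ is indistinguishable from $T(0, n)$ on such a window with high probability as $n \to \infty$. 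The stochastic domination lemmas of Section \ref{S:dom} depend only on local hook-length comparisons and extend to $\lambda(\pi)$, yielding the same local tableau limit $F$; applying the $\oper{EG}$ map then gives $V^n \cvgd U$. The main technical obstacle is making this generalized Edelman--Greene correspondence precise, since reduced decompositions of a general permutation need not be in bijection with tableaux of a single shape; the analysis must be carried out componentwise on the local shape, paralleling the piecewise definition of $\oper{EG}$ on $\scrI$.
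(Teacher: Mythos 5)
Your reduction to showing $V^n \cvgd U$ is fine as far as it goes (conditioning on $\tilde\sigma^n_s=\pi$ does make the suffix a uniformly random reduced word of $\pi^{-1}\rev_n$), but the step you then lean on is the entire difficulty, and the route you sketch for it does not work. Reduced words of a general permutation $w$ are \emph{not} in bijection with standard Young tableaux of a single (straight or skew) shape: under Edelman--Greene insertion they are counted by $\sum_\lambda a_{w\lambda}f^\lambda$ over several shapes, with multiplicities given by Stanley coefficients. So there is no "uniform random reverse standard Young tableau on a shape $\lambda(\pi)$" to which the hook formula, the hook probabilities \eqref{rate-def}, and the domination lemmas of Section \ref{S:dom} could be applied; those all depend on the explicit product formula for straight shapes. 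Moreover, the heuristic that $\lambda(\pi)$ "agrees with the staircase outside a region localized near the window" is misleading: $\pi=\tilde\sigma^n_s$ has order $n$ inversions spread over all of $\{1,\dots,n\}$, not concentrated near $k_n$, so whatever combinatorial object replaces the staircase is globally perturbed. As written, the core claim $V^n\cvgd U$ is asserted, not proved, and proving it along these lines would require rebuilding the whole paper for a much harder combinatorial object.

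The paper's proof avoids all of this via an exact finite-$n$ symmetry from \cite{angel2007random}: deleting the first transposition $\pi_{i_1}$ of a uniform sorting network and appending $\pi_{n-i_1}$ at the end yields again a uniform sorting network, so the swap sequence is stationary under this cyclic shift. One then extends $U_n$ to a process $U_n^*$ that keeps swapping at every point of the rate-$n$ Poisson clock by cycling the swap sequence in this way; $U_n^*$ has exactly stationary increments, agrees with $U_n$ up to the time $T_n$ of the $\binom{n}{2}$th Poisson point, and $T_n\to\infty$, so the limit $U$ inherits stationary increments. If you want to salvage your approach, you should replace the tableau argument for $V^n\cvgd U$ with this rotation symmetry (or some other exact distributional identity at finite $n$); without such an input the argument has a genuine gap.
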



\begin{proof}
The sequence of transpositions $\{\pi_{i_1}, \ddd \pi_{i_k}\}$ in a random sorting network is equal in law to the sequence $\{\pi_{i_\ell}, \ddd \pi_{i_{\ell + k -1}}\}$.  To prove this time stationarity, note that if we remove the first swap $\pi_{i_1}$ from a sorting network, we can get another sorting network by adding the swap $\pi_{n - i_1}$ to the end of the sorting network. This result was first proved in \cite{angel2007random}.

\medskip

We use this idea to extend the process $U_n$, which only completes ${n \choose 2}$ swaps at the first ${n \choose 2}$ times of a rate-$n$ Poisson process $\Pi_n$, to a process $U_n^*$, which completes swaps at every time in $\Pi_n$. Let the first $N$ swaps in $U_n^*$ be as in $U_n$ and then recursively define the $k$th swap in $U_n^*$ to be equal to $\pi_{n - j}$, where $\pi_{j}$ is the $\left(k - {n \choose 2}\right)$th swap in $U_n^*$ for $k > N$. Then $U_n^*$ is a time-stationary process, and $U_n^*(x, t) = U_n (x, t)$ for all $t \leq T_n$, where $T_n$ is the ${n \choose 2}$th point in $\Pi_n$.

\medskip

 Since $T_n \cvgd \infty$ as $n \to \infty$, and $U_n \cvgd U$, $U_n^* \cvgd U$ as well. Finally, since each $U_n^*$ has stationary increments, $U$ must have stationary increments as well. 
\end{proof}

Putting this all together, we obtain Theorem \ref{T:main} in the $u=0$ case.

\begin{theorem} 
\label{k0}
Let $a_n$ be a sequence of integers with $a_n = o(n)$. Let $U_n$ be the swap process defined by 
$$
U_n(x, t) = \sig^n_{\lfloor nt \rfloor}(a_n + x) - a_n,
$$
where $\sig^n$ is an $n$-element random sorting network. Then 
$$U_n \cvgd U,$$
where $U$ is a swap process that is stationarity and mixing of all orders with respect to the spatial shift, and has time-stationary increments.
\end{theorem}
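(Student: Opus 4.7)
The plan is to assemble the machinery from Sections \ref{S:hook-form}--\ref{S:regularity} and from the discussion preceding the theorem. Three ingredients are already in hand: weak convergence of the tableau inclusion functions $F_n \cvgd F$ (Theorem \ref{T:tableau-centre}); a.s.\ membership $F \in \scrI$ (Proposition \ref{inject} for injectivity and Corollary \ref{cor-spaces} for the gaps in the bottom row); and continuity of the generalized Edelman--Greene map $\oper{EG}: \scrI \to \scrA$ established in the discussion above. The only step not yet explicitly recorded is the Poissonization that relates the discrete-time process $U_n$ in the theorem statement to the continuous-time map $\oper{EG}(F_n)$.

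For this Poissonization, I would observe that conditioning the tableau process $X_n = X(a_n, n, n)$ on its sequence of jump times $(t_i)_{i \ge 1}$ produces a uniformly random reverse standard Young tableau on $T(a_n, n)$, which by Edelman--Greene corresponds to a uniformly random sorting network of order $n$. Moreover the Poisson-time scaling matches the $1/n$ scaling of the theorem: the $i$-th jump time of the rate-$n$ Poisson process satisfies $t_{\lfloor nt \rfloor} \to t$ in probability uniformly on compact subsets. Consequently $\oper{EG}(F_n)$ and the process $U_n$ of the theorem differ only by a random time change that converges uniformly to the identity, and hence share the same weak limit in the topology on $\scrA$.

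Applying the continuous mapping theorem then yields $U_n \cvgd \oper{EG}(F) =: U$, which is the convergence claim. The three structural properties of $U$ follow directly from properties already proved: spatial stationarity and all-order mixing of $U$ descend from the same properties of $F$ (Theorem \ref{T:YT_limit} and Proposition \ref{asym-ind}) via the equivariance of $\oper{EG}$ under the horizontal shift $(x, y) \mapsto (x+2, y)$, and time-stationarity of increments is exactly Proposition \ref{time-stat}.

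There is no substantial obstacle remaining: once the map $\oper{EG}$ and the regularity of $F$ are in hand, the theorem is simply a matter of tying together previously proved statements. The one mildly subtle point is that $a_n$ is allowed to grow sub-linearly in $n$ rather than being identically zero, but this is handled throughout because every domination estimate used in establishing $F_n \cvgd F$ is insensitive to shifts of the center by $o(n)$, as explicitly noted after the statement of Theorem \ref{T:tableau-centre}.
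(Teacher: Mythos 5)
Your proposal is correct and follows essentially the same route as the paper: the paper's Section \ref{S:llcentre} likewise combines $F_n \cvgd F$, the a.s.\ membership $F \in \scrI$ (via Proposition \ref{inject} and Corollary \ref{cor-spaces}), and continuity of $\oper{EG}$ to get $U_n = \oper{EG}(F_n) \cvgd \oper{EG}(F)$, then invokes Proposition \ref{time-stat} for time-stationary increments and inherits spatial stationarity and mixing from $F$. The de-Poissonization step you spell out is handled in the paper by the brief law-of-large-numbers remark following Theorem \ref{T:tableau-centre}, so your treatment is just a more explicit version of the same argument.
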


\end{section}
\begin{section}{The Local Limit Outside the Center}
\label{S:outside}
In this section, we prove that the local limit of random reverse standard staircase Young tableaux exists at distance $\lfloor un \rfloor+ o(n)$ outside the center. This will immediately imply the existence of the local limit outside the center for sorting networks via the Edelman-Greene map $\oper{EG}$ in Section \ref{S:llcentre}. 

\begin{theorem}
\label{lim-outside}
Let $u \in (-1, 1)$, $a_n = o(n)$, and let $G_n$ be the inclusion functions for the sequence of tableau processes $X_n(\lfloor un \rfloor  + a_n, n, n)$. Then
 $$G_n \cvgd  F^u = \frac{1}{\sqrt{1 - u^2}}F,$$
 where $F$ is the limit when $u = 0$.
\end{theorem}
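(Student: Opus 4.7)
The plan is to adapt the proof of Theorem~\ref{T:tableau-centre} (which treated $u = 0$) to general $u \in (-1, 1)$, using Corollary~\ref{monotone-cor} in place of its $u=0$ specialization. By the law of large numbers for Poisson processes, it suffices to show that the inclusion function $G_n$ of the tableau process $X_n(\lfloor un\rfloor + a_n, n, n)$ converges in distribution to $F^u = F/\sqrt{1-u^2}$; parity issues between $\lfloor un\rfloor + a_n$ and $n$ are absorbed into the $o(n)$ shift as discussed after Theorem~\ref{T:tableau-centre}.

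First I establish pointwise tightness of $\{G_n(z)\}$ for each $z \in \half$, adapting Lemma~\ref{point-tight}. Choose $m$ large enough that the upper cone $\{z' \in T(0,m) : z' \geq z\}$ occupies more than $\tfrac34\binom{m}{2}$ squares of $T(0,m)$. Corollary~\ref{monotone-cor} (with, say, $\epsilon_1 = 1$ and $\epsilon_2 = 1/4$) then gives, for $n \geq N(m)$,
\[
X\!\left(\lfloor un\rfloor + a_n,\, n,\, \tfrac{2n}{\sqrt{1-u^2}}\right)\ \sgeq\ X(0, m, m)
\]
up to the time $T$ at which $\tfrac14\binom{m}{2}$ squares have been placed in $X(0,m,m)$. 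By the choice of $m$, the point $z$ itself has been placed by time $T$, so the coupling yields $\sqrt{1-u^2}\,G_n(z)/2 \leq F_m(z) < \infty$ almost surely.

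Convergence of the whole sequence $G_n$ to some limit $G^u$ then follows by the exact argument of Theorem~\ref{T:tableau-centre}. If $G_n$ had two subsequential limits disagreeing on a finite set $K \subset \half$, then truncating $G_n$ past the stopping time $T_m^\epsilon$ (when $\epsilon\binom{m}{2}$ squares have been added to an auxiliary $X(0,m,m)$), and using the monotone domination $\sqrt{1-u^2}\,G_n \leq (1+3\epsilon)F_m^\epsilon$ from Corollary~\ref{monotone-cor} inside Lemma~\ref{domination-implies-convergence}, would force $G_n|_K$ to converge in distribution, a contradiction.

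The main obstacle is identifying $G^u$ as $F/\sqrt{1-u^2}$. Passing to the limit $n\to\infty$, then $m\to\infty$, then $\epsilon \to 0$ in the above bound (using Theorem~\ref{T:tableau-centre} to get $F_m^\epsilon \cvgd F$) yields the stochastic upper bound $G^u \leq F/\sqrt{1-u^2}$. A naive application of Lemma~\ref{stoch-dom-general} in the reverse direction, comparing $X(\lfloor un\rfloor + a_n, n, n)$ against a larger central process $X(0,\lceil \mu n\rceil, \theta \mu n)$, only forces $\theta \to 1$ as $\mu \to \infty$ and yields the weaker bound $G^u \geq F$; upgrading this to the correct $\sqrt{1-u^2}$ scaling requires the explicit hook-product calculation of Lemma~\ref{hook-prob-inequality}. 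Applied to the corner $(\lfloor un\rfloor + a_n, 1)$ in $T(0,n)$, the reverse hook splits into two legs of lengths $\approx n(1\mp u)/2$ whose telescoping products yield an overall factor of order $n\sqrt{1-u^2}$, so the instantaneous rate at that corner in $X(0,n,n)$ is asymptotically $\sqrt{1-u^2}$ times the corresponding central rate at the origin. Combining this with the modified-rate machinery of Section~\ref{S:rate} to obtain uniform-in-time control closes the gap and yields $G^u \eqd F^u$.
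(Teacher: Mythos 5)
There is a genuine gap, in two places. First, your claim that convergence of the whole sequence ``follows by the exact argument of Theorem~\ref{T:tableau-centre}'' does not go through. Lemma~\ref{domination-implies-convergence} requires a self-domination within the sequence being proven to converge: $G_n \sleq (1+\ep)G^\ep_M$ with $G^\ep_M$ a small perturbation of $G_M$ itself, and with the speed-up factor tending to $1$. For the off-center sequence, comparing $X(\lfloor um\rfloor + a_m, m, m)$ with $X(\lfloor un\rfloor + a_n, n, \theta n)$ in Lemma~\ref{stoch-dom-general} forces $d \approx u(n-m)$, so the factor $\bigl(1-((n_1+d)/(n_2-2))^2\bigr)^{-1/2}$ tends to $(1-u^2)^{-1/2}$, not to $1$; the self-domination carries an irreducible multiplicative error of exactly the constant you are trying to identify. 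What your Corollary~\ref{monotone-cor} comparison actually yields is domination by the \emph{central} sequence $F_m$, which (as in the paper) gives only the one-sided bound $G \sleq F^u$ for every subsequential limit $G$; it neither pins down uniqueness of the subsequential limit nor gives the reverse inequality.

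Second, and more seriously, your mechanism for the reverse bound $G \sgeq F^u$ is not a proof. The hook-product computation at the corner $(\lfloor un\rfloor,1)$ gives the rate $\tfrac{4}{\pi}\sqrt{1-u^2}$ only for the \emph{empty} configuration; at later times the rate depends on the random configuration, and there is no coupling lemma in the paper (nor an obvious one) that lets a small sped-up central process dominate the restriction of a large off-center process — all the domination lemmas run the other way (big diagram dominates small diagram). The ``modified-rate machinery'' of Section~\ref{S:rate} only gives upper bounds in expectation on rates, used for regularity, and cannot identify the limiting law. The paper's actual route to $G \sgeq F^u$ is quite different and uses several ingredients absent from your proposal: extremal (liminf/limsup) subsequential limits are shown to be translation invariant (Proposition~\ref{sup-trans}); for a translation-invariant $G\in\scrG$ the Edelman--Greene map converts expected heights into expected swap counts, $\expt h(G,x,t) = \expt s(G,x,t)$ (Lemma~\ref{swap-height}); the global result of \cite{angel2007random} (Theorem~\ref{semi}) gives $\expt s(G_n,0,t)\to \tfrac{4}{\pi}\sqrt{1-u^2}\,t$, whence $\expt h(G,0,t)\le \tfrac{4}{\pi}\sqrt{1-u^2}\,t$ by Fatou; and time-stationarity of increments of $\operatorname{EG}(F)$ plus Lemma~\ref{height-lim} force $\expt h(F,0,t)=Kt$ with $K\ge \tfrac{4}{\pi}$, so $\expt h(F^u,0,t)\ge \expt h(G,0,t)$. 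Combined with the pointwise domination $G\sleq F^u$ this gives equality in distribution. Without this (or an equivalent substitute), the identification of the limit — which is the entire content of the theorem beyond the $u=0$ case — is missing.
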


We will assume that $a_n = 0$ throughout, as it is easy to use domination lemmas to conclude Theorem \ref{lim-outside} for general $a_n$ from this case. The basic idea of the proof is as follows. 
By using the domination lemmas in Section \ref{S:dom}, it is easy to see that any subsequential limit $G$ at a distance $\lfloor un \rfloor$ outside the center must be stochastically dominated by $F^u$, so we just need to show domination in the opposite direction. For this, we show that the expected heights in the tableau process corresponding to $F^u$ are greater than expected heights in the tableau process corresponding to $G$ at every location and every time.

\medskip

 Note that it is possible to get domination in the opposite direction for almost every value of $u$ by comparing the number of squares in a tableau process at time $t$ with the expected number of squares in each of processes shifted by $u$, integrated over all $u \in (-1, 1)$. However, this approach only proves Theorem \ref{lim-outside} for almost every $u$. To prove the theorem for any $u$, we take the following approach. 
 
 \medskip
 
 By considering the inclusion functions $G_n$ of the shifted tableau processes as elements of $\scrH = [0, \infty]^\half$ we have a set $\scrG$ of subsequential limits of $G_n$ by compactness. Consider largest and smallest elements in $\scrG$ in the stochastic ordering on inclusion functions. Such elements exist since $\scrG$ is closed and the space of probability measures on $\scrH$ is compact. Call $G \in \scrG$ a limsup if for any $G' \in \scrG$, $G' \sgeq G$ if and only if $G' = G$. Similarly, we define a liminf in $\scrG$ to be any $G \in \scrG$ such that for $G' \in \scrG$, $G' \sleq G$ if and only if $G' = G$. 
 
 \medskip
 
 We show that these elements are translation invariant, and that any translation invariant element of $\scrG$ has expected heights less than those of $F^u$. Therefore any limsup or liminf in $\scrG$ must be $F^u$. As any element in $\scrG$ must lie between a liminf and a limsup, this allows us to conclude that $\scrG = \{ F^u \}$.
 \bigskip

{\bf Shifted tableau processes}. \qquad We introduce new notation for the tableau processes used in this section, using $Y$ instead of $X$ to distinguish from centered tableau processes. For a fixed value of $u \in (-1, 1)$, define $Y^K_n(t)$ to be the rate $Kn$ tableau process on the diagram $T(\lfloor un \rfloor, n)$.  When $K = 1$, we omit the superscript. To establish the translation invariance of liminfs and limsups, we need a basic domination lemma involving these processes.

\begin{lemma}
\label{easy-dom} Fix $u \in (-1, 1)$, and choose $\ell$ so that for every $n$, \begin{align*}
&T(\lfloor un \rfloor, n) \sset T(\lfloor u(n +\ell) \rfloor+ 2, n + \ell),\qquad \mathand \\
&T(\lfloor un \rfloor + 2, n) \sset T(\lfloor u(n+\ell) \rfloor, n + \ell).
\end{align*}
Let $T_n$ be the time when ${n \choose 2}/2$ squares have been added to $Y_n$, and let $\theta_n = \frac{n+4\ell}{n - 1}$. Then for all large enough $n$,
\begin{align}
\begin{split}
\label{tab-dom}
&Y_{n+2\ell}^{\theta_n^2} \sgeq \tau Y_{n+\ell}^{\theta_n} \sgeq Y_n \qquad \mathand \\
&Y_{n+2\ell}^{\theta_n^2} \sgeq Y_{n+\ell}^{\theta_n}  \sgeq Y_{n},
\end{split}
\end{align}

where all stochastic domination holds up to time $T_n$.
 \end{lemma}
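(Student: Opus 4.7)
The plan is to verify each pairwise domination via direct rate comparison using Lemma~\ref{stoch-dom-criterion}, with the explicit formula for $\theta_n$ calibrated to make the rates align; a straight application of Lemma~\ref{stoch-dom-general} will not suffice here, since its bounds are too coarse when $n_2 - n_1 = \ell$ is a fixed constant and $n\to\infty$.

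I would handle the basic chain $Y_n \sleq Y_{n+\ell}^{\theta_n}$ first. The two hypothesized inclusions, combined with the fact that $T(c,m)\sset T(c',m')$ iff $|c-c'|\le m'-m$, imply the bound $|\lfloor u(n+\ell)\rfloor - \lfloor un\rfloor|\le \ell-2$, which in particular yields the inclusion $T(\lfloor un\rfloor, n)\sset T(\lfloor u(n+\ell)\rfloor, n+\ell)$ needed for coupling. For any downward closed $A$ with $|A|\le\binom{n}{2}/2$ and any corner $z$ of both residual diagrams, I would write
\[
\frac{v_{Y_{n+\ell}^{\theta_n}}(z,A)}{v_{Y_n}(z,A)} = \frac{\theta_n(n+\ell)}{n}\cdot\frac{|T(\lfloor un\rfloor, n)\smin A|}{|T(\lfloor u(n+\ell)\rfloor, n+\ell)\smin A|}\cdot\prod_{y\in R_z^2\smin R_z^1}\left(1+\frac{1}{h_y^2-1}\right).
\]
By Lemma~\ref{stoch-dom-1} the extremal rate ratio in the coupling occurs when $A_1=A_2=A$, and the size ratio is monotone decreasing in $|A|$, so the infimum is at $|A|=\binom{n}{2}/2$. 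A direct algebraic check then shows that the choice $\theta_n=(n+4\ell)/(n-1)$ is exactly the multiplier making $(n+4\ell)(n+\ell)\cdot\binom{n}{2}/2 \ge n(n-1)\cdot[\binom{n+\ell}{2}-\binom{n}{2}/2]$ up to terms that are positive for all large $n$, so that the first two factors already multiply to at least~$1$. The hook product is $\ge 1$ since $R_z^1\sset R_z^2$, so the overall rate ratio is $\ge 1$, and Lemma~\ref{stoch-dom-criterion} gives the domination up to $T_n$.

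The spatially shifted version $Y_n \sleq \tau Y_{n+\ell}^{\theta_n}$ is handled identically: the first hypothesis is precisely the inclusion $T(\lfloor un\rfloor, n)\sset T(\lfloor u(n+\ell)\rfloor+2, n+\ell)$ needed in the coupling, and the center-offset bound $|\lfloor u(n+\ell)\rfloor + 2-\lfloor un\rfloor|\le \ell-2$ (also from the hypotheses) leaves the rate comparison unchanged.

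For the iterated chain $Y_{n+\ell}^{\theta_n}\sleq Y_{n+2\ell}^{\theta_n^2}$, I would apply the same argument with $n$ replaced by $n+\ell$: the rate ratio becomes $\theta_n(n+2\ell)/(n+\ell)$ times a new size ratio, and an analogous algebraic expansion shows the leading $n^2$ correction is $(\ell+1)$ times positive, so the ratio still exceeds~$1$ at the relevant worst case. The main obstacle is that $T_n$ is adapted to $Y_n$, not to $Y_{n+\ell}^{\theta_n}$, so one must separately control $|Y_{n+\ell}^{\theta_n}(T_n)|$ to ensure it remains in the regime where the rate comparison applies. A Poisson concentration estimate gives $\E|Y_{n+\ell}^{\theta_n}(T_n)|=(n+4\ell)(n+\ell)/4$, which is bounded by $(3/5)\binom{n+\ell}{2}$ for all large $n$; one can check that the rate comparison from Step~1 continues to yield domination throughout states of this size, completing the iterated chain up to $T_n$.
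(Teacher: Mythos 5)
Your argument for the two first-level dominations $Y_n \sleq Y_{n+\ell}^{\theta_n}$ and $Y_n \sleq \tau Y_{n+\ell}^{\theta_n}$ is essentially the paper's proof: the same reduction via Lemmas \ref{stoch-dom-criterion} and \ref{stoch-dom-1} to a single-configuration rate comparison, the same factorization of the rate ratio, and the same algebra showing that $\theta_n\tfrac{n+\ell}{n}$ times the worst-case size ratio at $|A|=\binom{n}{2}/2$ already exceeds $1$, the hook product over $R_z^2\smin R_z^1$ being $\geq 1$ for free. Your observation that Lemma \ref{stoch-dom-general} cannot be used here, because its square-root factor degenerates when $n_2-n_1$ is a fixed constant, is also correct and is exactly why the paper argues directly.

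Where you diverge is the iterated chain, and you have put your finger on a point the paper waves away with ``the same argument.'' Indeed, the same argument controls the rates only while $|Y_{n+\ell}^{\theta_n}|\le\binom{n+\ell}{2}/2$, and the exit time from that set has mean roughly $\tfrac{(n+\ell-1)(n-1)}{4(n+4\ell)}$, which is \emph{smaller} than $\E T_n \approx \tfrac{n-1}{4}$; so enlarging the constraint set really is necessary, and your enlargement to $|A|\le\tfrac35\binom{n+\ell}{2}$ does keep the rate ratio above $1$ for large $n$ (the margin is of order $1/n$). The soft spot is your last sentence: a bound on $\E\,|Y_{n+\ell}^{\theta_n}(T_n)|$ — or even a Poisson concentration estimate — only shows that the exit time from the enlarged set exceeds $T_n$ with probability tending to $1$. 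It cannot yield \emph{sure} domination up to $T_n$, since $T_n$ is measurable with respect to a different process and an atypically fast run of $Y_{n+\ell}^{\theta_n}$ defeats any fixed cardinality threshold. What you actually obtain is domination up to a stopping time adapted to $Y_{n+\ell}^{\theta_n}$ that exceeds $T_n$ with high probability. That weaker form is all that the lemma's sole application (Proposition \ref{sup-trans}) uses — there every domination is truncated at a time that need only tend to infinity in probability — and it is no weaker than what the paper's own one-line justification delivers; but you should state the conclusion in that form rather than claiming the iterated chain holds up to $T_n$ outright.
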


As before, $\tau$ is the spatial shift. Thus $\tau Y^K_{n}(t)$ is exactly $Y^K_n$ shifted by 2 units to the right so that it lives on the diagram $T(\lfloor un \rfloor + 2, n)$. The essence of this lemma is that we can get domination of the shifted process $\tau Y_{n_1}$ over $Y_n$ by letting $n_1$ be slightly larger than $n$, and slightly speeding up $\tau Y_{n_1}$. The precise value of the speed-up $\theta_n$ is not important here, only that $\theta_n \to 1$ as $n \to \infty$.

 \begin{proof}
 
We just prove that $\tau Y_{n+\ell}^{\theta_n}\sgeq Y_n$ up to time $T_n$, as the rest of the inequalities follow using the same argument. By Lemmas \ref{stoch-dom-criterion} and \ref{stoch-dom-1}, we just need to show that if $Y_n$ and $\tau Y_{n+\ell}^{\theta_n}$ are in the same configuration $A$, and $z$ is a corner of both $T(\lfloor un \rfloor, n) \smin A$ and $T(\lfloor u(n + \ell) \rfloor + 2, n+\ell) \smin A$, that $v_n(z, A) < v_{n+\ell}(z, A)$, where $v_n$ and $v_{n+\ell}$ refer to rates in $Y_n$ and $Y_{n+\ell}^{\theta_n}$, respectively. To see this, observe that for any set $A$ of cardinality at most ${n \choose 2}/2$,
\begin{align*}
\frac{v_{n+\ell}(z, A)}{v_{n}(z, A)} &= \theta_n \frac{n+\ell}{n}  \frac{\card{T(\lfloor un \rfloor, n) \setminus A}}{{\card{T(\lfloor u(n + \ell) \rfloor + 2, n + \ell) \setminus A}}}  \prod_{y \in R_z^{n+\ell} \setminus R_z^n}\left(1 + \frac{1}{h_y^{n+\ell} -1} \right) \\
& \geq \frac{(n+ 4\ell)(n+\ell)}{n(n-1)} \frac{n(n-1)}{2(n+\ell)(n+\ell-1) - n(n-1)} \\
& > 1. \qedhere
\end{align*}
 \end{proof}
 Now we can characterize liminfs and limsups in $\scrG$.

 \begin{prop}
 \label{sup-trans}  Suppose $G \in \scrG$ is a limsup (or a liminf). Then $G$ is translation invariant.
 \end{prop}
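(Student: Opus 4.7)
The plan is to combine the maximality of a limsup (or minimality of a liminf) in $\scrG$ with the dominations in Lemma \ref{easy-dom}, probing the sequence at the appropriate neighbouring index. Under the order-reversal between tableau processes and their inclusion functions, Lemma \ref{easy-dom} yields
\begin{equation*}
G_{n+\ell}^{\theta_n} \sleq G_n, \qquad \tau G_{n+\ell}^{\theta_n} \sleq G_n, \qquad \tau^{-1} G_{n+\ell}^{\theta_n} \sleq G_n,
\end{equation*}
for all large $n$, up to the time $T_n$ at which $\binom{n}{2}/2$ squares have been added. The $\tau^{-1}$ inequality is not explicitly stated but follows by rerunning the proof of Lemma \ref{easy-dom} verbatim using the second of its two inclusions, $T(\lfloor un \rfloor + 2, n) \sset T(\lfloor u(n+\ell) \rfloor, n+\ell)$.

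Now fix a limsup $G \in \scrG$ and a subsequence along which $G_{n_k} \cvgd G$. By compactness of probability measures on the Polish space $\scrH = [0,\infty]^\half$, pass to a further subsequence with $G_{n_k - \ell} \cvgd G_{-1} \in \scrG$. Apply the three dominations above with $n = n_k - \ell$ and take $k \to \infty$: the speed-up $\theta_n \to 1$ is immaterial (since $G_n^{\theta_n} \eqd G_n/\theta_n$), the stopping time satisfies $T_n \to \infty$ in probability, and stochastic domination passes to weak limits on Polish spaces (by Strassen plus tightness of the couplings). This yields $G \sleq G_{-1}$, $\tau G \sleq G_{-1}$, and $\tau^{-1} G \sleq G_{-1}$. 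The limsup property applied to $G \sleq G_{-1}$ forces $G_{-1} \eqd G$, whence $\tau G \sleq G$ and $\tau^{-1} G \sleq G$. Since $\tau$ is a pointwise-order-preserving bijection of $\half$, stochastic order is preserved under $\tau$, so $\tau^{-1} G \sleq G$ is equivalent to $G \sleq \tau G$; combined with $\tau G \sleq G$ and antisymmetry of stochastic order, $\tau G \eqd G$. The liminf case is symmetric: extract $G_1$ as a subsequential limit of $G_{n_k + \ell}$, use the same three dominations with $n = n_k$ to obtain $G_1 \sleq G$, $\tau G_1 \sleq G$, and $\tau^{-1} G_1 \sleq G$; minimality of $G$ gives $G_1 \eqd G$, and translation invariance follows identically.

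The main obstacle here is mostly bookkeeping rather than conceptual. One must confirm compactness of subsequential limits on $\scrH$ (straightforward from compactness of $[0,\infty]^\half$), verify the $\tau^{-1}$ version of Lemma \ref{easy-dom} (by a symmetric rate computation), and check that the effective time restriction $t \leq T_n$ does not obstruct weak limits at fixed sites $z \in \half$ (it does not, since $T_n \to \infty$). The genuine conceptual step is the choice to probe the sequence at $n_k - \ell$ (resp.\ $n_k + \ell$) so that the limsup (resp.\ liminf) property can be triggered to collapse $G_{-1}$ (resp.\ $G_1$) onto $G$ and close the loop.
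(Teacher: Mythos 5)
Your proof is correct, and it reaches the conclusion by a slightly different mechanism than the paper's. The paper uses the full two-level sandwich of Lemma \ref{easy-dom}: it first shows (via extremality of $G$) that $G^{\theta_{n(i)}^2}_{n(i)+2\ell} \cvgd G$, and then squeezes both $G^{\theta}_{n(i)+\ell}$ and its shift between two sequences converging to $G$, so the shift invariance drops out of a squeeze theorem for weak convergence under stochastic order. You instead probe a single neighbouring index ($n_k-\ell$ for a limsup, $n_k+\ell$ for a liminf), collapse the resulting subsequential limit onto $G$ by extremality, and extract the two one-sided inequalities $\tau G \sleq G$ and $\tau^{-1}G \sleq G$, closing with antisymmetry of the stochastic order. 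The trade-off is that you need a left-shift variant $\tau^{-1} Y^{\theta_n}_{n+\ell} \sgeq Y_n$ that Lemma \ref{easy-dom} does not state; as you note, it follows verbatim from the second inclusion in that lemma's hypothesis (translate $T(\lfloor un\rfloor+2,n)\sset T(\lfloor u(n+\ell)\rfloor,n+\ell)$ by $-2$), since the rate comparison depends only on $|c_1-c_2|$ and the diagram sizes. In exchange you avoid the squeeze step entirely and never need the $n+2\ell$ level. Your handling of the auxiliary issues (closedness of $\scrG$ under further subsequences, the speed-up $\theta_n\to1$ washing out, the stopping times $T_n\cvgd\infty$, and preservation of stochastic order under weak limits and under composition with the order isomorphism $\tau$) is all sound.
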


 \begin{proof}
Throughout this proof, we let $G^K_{n}$ be the inclusion function of $Y^K_{n}(t)$. Let $G_{n(i)} \to G$ for some liminf $G \in \scrG$ (the case for $G$ a limsup is similar). Note that by Lemma \ref{easy-dom}, $ G' \sleq G$ for any subsequential limit $G'$ of $G^{\theta_{n(i)}^2}_{n(i) + 2\ell}$.  By passing to the limit, we remove any issues with the stopping time $T_n$ from Lemma \ref{easy-dom} since $T_n \cvgd \infty$ as $n \to \infty$. Such limits exist by compactness of $\scrH$.

\medskip

However, since $\theta_n^2 \to 1$ as $n \to \infty$, $G'$ is also a subsequential limit of $G_{n(i) + 2\ell}$, so since $G$ is a liminf, $G' = G$. Therefore $G^{\theta_{n(i)}^2}_{n(i) + 2\ell} \cvgd G$. Now again by Lemma \ref{easy-dom}, we have that 
\begin{align*}
&\close{G}^{\theta_{n(i)}^2}_{n(i) + 2\ell} \sleq \close{G}^{\theta_{n(i)}}_{n(i) + \ell} \sleq \close{G}_{n(i)} \qquad \mathand \\
&\close{G}^{\theta_{n(i)}^2}_{n(i) + 2\ell} \sleq \close{G}^{\theta_{n(i)}}_{n(i) + \ell} \circ \tau \sleq \close{G}_{n(i)},
\end{align*}
where $\close{G_*} = G \wedge T_{n}$ for each of the inclusion functions $G_*$ corresponding to the tableau processes in \eqref{tab-dom}. Note here that if $G$ is the inclusion function for the process $Y$, then $G \circ \tau$ is the inclusion function for the shifted process $\tau Y$. By the squeeze theorem, and the facts that $\theta_n \to 1$ and $T_n \cvgd \infty$, this implies that both $G_{n(i) + \ell} \cvgd G$ and $G_{n(i) + \ell} \circ \tau \cvgd G$, allowing us to conclude that $G  \circ \tau \eqd G$.
 \end{proof}

 We now aim to show that every translation-invariant element $G \in \scrG$ is the rescaled central limit $F^u$ by comparing heights. For any $J \in \scrH$, $x \in 2\inte + 1$ and $t \in [0, \infty)$, define the height function

 $$h(J, x, t) = \card{ \{z = (z_1, z_2): z_1 = x \mathor x + 1 \mathand J(z) < t \} }.$$

 We first prove the following lemmas about the expected heights in $F$.
\begin{lemma}
\label{finite-expect}
$\expt h(F, x, t)$ is finite for all $t \in [0, \infty)$ and $x \in 2 \inte$.
\end{lemma}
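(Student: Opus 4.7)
The plan is to prove the bound first for the finite inclusion functions $F_n$ of $X_n(0,n,n)$ uniformly in $n$, and then pass to the limit via Fatou's lemma. To obtain the uniform finite bound, I would couple $X_n$ with the cylindrical tableau process $C_n$ of rate $8n$ via Lemma \ref{cyl-dom}, so that $X_n(s)\subseteq C_n(s)$ for all $s\le T_n$, where $T_n$ is the hitting time of $n^2/4$ squares in $X_n$. Standard Poisson tail bounds give $\P(T_n\le t)\to 0$ for fixed $t$ as $n\to\infty$, so the exceptional event contributes negligibly. Crucially, $C_n$ is invariant under the horizontal shift by $2$, so each of its $2(n-1)$ columns has the same expected square-count at time $t$, equal to $\E|C_n(t)|/(2(n-1))$. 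Any bound of the form $\E|C_n(t)|\le n\cdot g(t)$ with $g(t)<\infty$ then yields $\E h(F_n,x,t)\le C(t)$ uniformly in $n$, accounting for both columns $x$ and $x+1$.

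Passing to the limit, I would use that $F_n(z)\to F(z)$ in distribution by Theorem \ref{T:tableau-centre}; since $F(z)$ has no atoms by Corollary \ref{atom}, $\P(F_n(z)\le t)\to \P(F(z)\le t)$. For each integer $M\ge 1$, define the truncated height
\[
h_M(G,x,t):=\sum_{y\le M}\left[\indic_{G(x,y)\le t}+\indic_{G(x+1,y)\le t}\right].
\]
Dominated convergence yields $\E h_M(F_n,x,t)\to \E h_M(F,x,t)$ for each $M$, so $\E h_M(F,x,t)\le C(t)$ by the uniform bound. Monotone convergence as $M\to\infty$ then gives $\E h(F,x,t)\le C(t)<\infty$.

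The main obstacle is the uniform bound on $\E|C_n(t)|$, which reduces to bounding the total jump rate $8n\sum_{z\ \text{corner}}\hookp(\mathcal{C}(n)\setminus A,z)$ in $C_n$. Using Lemma \ref{maxheight} and the fact that there are at most $2(n-1)$ corners, $\sum_z \hookp$ can be bounded by a multiple of the current maximum height $\beta$, giving a total rate of order $n\beta$; integrating then requires a Gronwall-type control on $\E\beta(C_n(s))$. An alternative approach is to use Corollary \ref{cor-spaces} to bound $h(F,x,t)$ by the gap between the nearest bottom-row ``blockers'' $(x_0,1)$ with $F(x_0,1)>t$, whose density is positive by Proposition \ref{prob-not-added}; however, deducing a finite expected gap from the mixing of $F$ alone is subtle and does not appear substantially simpler.
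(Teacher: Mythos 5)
Your strategy coincides with the paper's: dominate $X_n$ by the rate-$8n$ cylindrical process $C_n$ via Lemma \ref{cyl-dom}, discard the event $\{T_n\le t\}$ using Poisson tail bounds, use the rotational symmetry of the cylinder to reduce the expected height at a fixed column pair to $\E|C_n(t)|/(n-1)$, and pass to the limit. (The paper does the limit passage in one line, applying Fatou's lemma to $h(F_n,x,t\wedge T_n)\cvgd h(F,x,t)$, which uses Corollary \ref{atom}; your truncation $h_M$ together with dominated and monotone convergence is an equivalent, slightly longer route.)

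However, the step you explicitly leave open --- a uniform-in-$n$ bound $\E|C_n(t)|\le n\,g(t)$ --- is the whole content of the estimate, and the route you sketch for it would not close. Bounding each corner's hook probability via Lemma \ref{maxheight} and multiplying by the number of corners gives a total jump rate of order $n\beta$, and the Gronwall loop fails: the only a priori control on the maximal height $\beta(s)$ is through $|C_n(s)|$ itself, which returns a useless bound of the form $e^{cnt}$. What the paper uses instead is that the hook probabilities, \emph{summed over all corners}, are $O(1)$ uniformly in the state (for a genuine Young diagram they sum to exactly $1$, and the cylinder behaves comparably), so the total jump rate of $C(n,8n)$ is $O(n)$ with no dependence on $\beta$; integrating gives $\E|C_n(t)|=O(nt)$, and symmetry then yields the paper's bound $\E h(F,x,t)\le 8t$. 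In short: same architecture, but the central estimate is missing from your write-up, and the worst-case-per-corner bound you reach for is too lossy to supply it.
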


\begin{proof}
Note that $F_n \cvgd F$, and that
$$h(F_n, x, t) \cvgd h(F, x, t)$$
for all $t$ and $x$ since $F$ has no atoms. Recall also that the tableau processes $X_n$ are dominated by a sped-up cylinder process $C(n, 8n)$ up to the stopping time $T_n$ when $n^2/4$ squares have been to $X_n$. 
Since $T_n\to \infty$ in probability as $n\to \infty$, we also have 
$$h(F_n ,x, t\wedge T_n) \cvgd h(F, x, t).$$
By the symmetry of the cylinder, the expected height at $x$ at time $t$ in $C(n, 8 n)$ is $8t$, so by Fatou's lemma,

\[
\expt h(F, x, t) \leq \liminf_{n \to \infty} \expt h(F_n, x, t\wedge T_n) \leq 8 t. \qedhere
\]

\end{proof}
 \begin{lemma}
 \label{dct-lem} Let $T^n_t$ to be the stopping time when $\lfloor nt \rfloor$ squares have been added to the centered tableau process $X_n = X(0, n, n)$. There exists a subsequence $\{n_i : i \in \nat\}$ such that 
 $$\expt h(F_{n_i}, x, T^{n_i}_t) \to \expt h(F, x, t).$$

\end{lemma}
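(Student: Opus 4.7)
The proof plan is to combine three ingredients: (i) the concentration $T^n_t \to t$, (ii) the weak convergence $F_n \cvgd F$ from Theorem \ref{T:tableau-centre}, and (iii) uniform integrability of $h(F_n, x, T^n_t)$ via cylinder comparison. First, $T^n_t$ is the time of the $\lfloor nt\rfloor$-th event in a rate-$n$ Poisson process, hence a $\oper{Gamma}(\lfloor nt\rfloor, n)$ random variable with mean $\lfloor nt\rfloor/n$; standard Chernoff bounds give $T^n_t \to t$ in probability with exponential tails $\prob(T^n_t > t+\epsilon) \leq e^{-c_\epsilon n}$ for each $\epsilon>0$. Since $F_n \cvgd F$ on the compact metric space $[0,\infty]^\half$, the pair $(F_n, T^n_t)$ is tight, and the Skorokhod representation theorem yields a subsequence $\{n_i\}$ and a common probability space on which $F_{n_i} \to F$ pointwise and $T^{n_i}_t \to t$ almost surely.

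Using that $F$ is injective (Proposition \ref{inject}) and each $F(z)$ is atomless (Corollary \ref{atom}), almost surely $F(z) \neq t$ for every $z \in \half$. Combined with the a.s.\ convergence, this gives $\indic\{F_{n_i}(z) < T^{n_i}_t\} \to \indic\{F(z) < t\}$ a.s.\ for each $z$. Summing over the two columns appearing in the definition of $h$ and invoking Lemma \ref{finite-expect} to ensure $h(F, x, t) < \infty$ a.s., we conclude that $h(F_{n_i}, x, T^{n_i}_t) \to h(F, x, t)$ almost surely along the subsequence.

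Finally we upgrade almost sure convergence to convergence in expectation via uniform integrability. By Lemma \ref{cyl-dom}, the staircase process $X_n$ is dominated by the rate-$8n$ cylinder process $C_n$ up to the stopping time at which $n^2/4$ squares have been added, which exceeds $T^n_t$ with probability $1-o(1)$. Splitting on $\{T^n_t \leq 2t\}$ and using the crude bound $h \leq n-1$ on the complement, we obtain
\[
\expt h(F_n, x, T^n_t) \leq \expt h(G_{C_n}, x, 2t) + (n-1)\prob(T^n_t > 2t) + o(1),
\]
and by the rotational symmetry of the cylinder $\expt h(G_{C_n}, x, 2t) \leq 16t$ uniformly in $n$ (as in the proof of Lemma \ref{finite-expect}), while the Poisson tail makes the middle term $o(1)$. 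This uniform $L^1$ bound combined with a.s.\ convergence along the subsequence yields $\expt h(F_{n_i}, x, T^{n_i}_t) \to \expt h(F, x, t)$ by Vitali's convergence theorem. The main technical subtlety will be combining the Skorokhod coupling (which provides the a.s.\ convergence of $F_n$ and $T^n_t$) with the separately-defined cylinder coupling (which provides the $L^1$ bound); this is resolved by applying the two couplings on disjoint probability spaces, since we only need the marginal distributional statements $F_{n_i} \to F$ a.s.\ on one space and $\expt h(F_n, x, T^n_t) \leq 16t + o(1)$ on the other to invoke Fatou for the $\liminf$ and reverse Fatou (via uniform integrability) for the matching $\limsup$.
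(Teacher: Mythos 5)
The structure of your argument (distributional convergence of the heights plus a domination argument to upgrade to convergence of expectations) matches the paper's, but the final step has a genuine gap: a uniform bound on first moments is not uniform integrability, and Vitali's theorem (equivalently, ``reverse Fatou'') cannot be invoked from $\sup_n \expt h(F_n,x,T^n_t) \leq 16t + o(1)$ alone. The standard counterexample $X_n = n\indic(U<1/n)$ shows that a.s.\ convergence plus bounded means does not give convergence of expectations. Your cylinder comparison gives stochastic domination of $h(F_n,x,\cdot\wedge T_n)$ by $h(G_{C_n},x,\cdot)$, but the dominating variables change with $n$, and you have only controlled their means, not their tails uniformly; nothing in the paper (or in your sketch) supplies, say, a uniform second-moment bound for cylinder column heights. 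So the matching $\limsup$ inequality is unproved. (A smaller point you gloss over: a.s.\ convergence of each indicator $\indic\{F_{n_i}(z)<T^{n_i}_t\}$ does not by itself give a.s.\ convergence of the infinite sum defining $h$; here it is rescued by the downward-closedness of the configurations, which makes the column heights determined by finitely many indicators on the event $\{h(F,x,t)<\infty\}$, but this should be said.)

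The paper closes exactly this gap differently, and this is where the subsequence in the statement actually earns its keep: using Lemma \ref{stoch-dom-general} it chooses $n_i$ and speed-ups $1+\delta_i$ with $\prod_i(1+\delta_i)<\infty$ so that the sped-up processes $Z_i = X(0,n_i,(1+\delta_i)n_i)$ form a monotone chain, whence all $h(F_{n_i},x,t)$ are stochastically dominated by the \emph{single} random variable $h(J,x,t)$ with $J=\prod_j(1+\delta_j)^{-1}F$, whose expectation is finite by Lemma \ref{finite-expect}. Domination by one integrable variable does give uniform integrability, and dominated convergence finishes. In your write-up the subsequence is spent on the Skorokhod representation (where it is not even needed, since the limit $F$ is unique), while the place that genuinely requires a subsequential construction is left unsupported. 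To repair your proof, either import the paper's chained-domination construction to produce a single integrable dominating variable, or prove uniform integrability of the cylinder heights directly (e.g.\ a uniform $L^p$ bound for some $p>1$), neither of which is currently in your argument.
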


 \begin{proof}
 
We find a dominating ``infinite tableau process" for the sequence of tableau processes $X_n$. We can find an increasing sequence $\{n_i : i \in \nat\}$ and a decreasing sequence $\{\delta_i : i \in \nat\}$ such that for all $i$, the tableau process 
$$Z_i = X(0, n_i, (1 + \delta_i)n_i)$$
 stochastically dominates the process $Z_{i-1}$ up to time $T^{n_{i-1}}_t$, and such that
$$\prod_{i=1}^\infty(1 + \delta_i) < \infty.$$
Finding such sequences can easily be done by iteratively choosing $n_1, n_2$ and $\ep$ appropriately in Lemma \ref{stoch-dom-general} (noting that that domination in that lemma is up to the time when $\ep n_1^2$ squares have been added, so we can let $\ep$ become arbitrarily small for large $n_1$ and still have domination up to time $T^{n_1}_t$).
Then letting $J_i$ be the inclusion function for $Z_i$, we have
$$
J_i = \prod_{j=1}^i(1 + \delta_j)^{-1}F_{n_i} \cvgd J = \prod_{j=1}^\infty (1 + \delta_j)^{-1} F.
$$ $J_i$ is a monotone decreasing sequence in the stochastic ordering. Moreover,
$
F_{n_i} \sgeq J_i \sgeq J$  so $h(F_{n_i}, x, t)  \sleq h(J, x, t),
$
for every $x$ and $t$. Finally, heights in $J$ have finite expectation by Lemma \ref{finite-expect} as $J$ is a sped-up version of $F$. Therefore the dominated convergence theorem, 
\[
\expt h(F_{n_i}, x, t) \to \expt h(F, x, t).
\]
As $| \expt h(F_{n_i}, x, t) - \expt h(F_{n_i}, x, T^{n_i}_t)| \to 0$ as $n \to \infty$, this completes the proof.
\end{proof}
  In order to compare the heights in $F^u$ and $G$ we will need to translate the tableau processes to swap processes on the integers. The reason for doing this is that we can relate the expected height at position $x$ to the expected number of swaps at position $x$, and the expected number of swaps at any position in a sorting network is given by the following theorem from \cite{angel2007random}.
  \begin{theorem}
  \label{semi}
  Let $\sig$ be a random sorting network on $n$ particles given by a sequence of adjacent transpositions $\{\pi_{k_1}, \ddd \pi_{k_N}\}$, and let $a_n$ be a sequence of positive integers with $\frac{2a_n}{n} - 1\to u \in (-1, 1)$. Then
  \begin{align*}
 \qquad \qquad n\prob(k_1 = a_n) \to \frac{4}{\pi}\sqrt{1 - u^2} \qquad \mathand \qquad \expt\big(\card{ \{i \leq Cn : k_i = a_n\}}\big) \to \frac{4C}{\pi}\sqrt{1 - u^2}.
  \end{align*}
    \end{theorem}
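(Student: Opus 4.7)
The plan is to reduce both assertions to calculations on the uniformly random staircase Young tableau via the Edelman--Greene bijection, and then to apply the hook length formula together with the Wallis asymptotic.

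For the first assertion, under Edelman--Greene the first swap $k_1$ of the sorting network is determined by the corner of the staircase $\lambda=(n-1,n-2,\ldots,1)$ at which the entry $1$ of the associated standard Young tableau sits; the event $\{k_1=a_n\}$ corresponds to $1$ being placed at a specific corner $(i_n,n-i_n)$ with $i_n/n\to(1-u)/2$. The reverse hook at this corner consists of the $n-2$ remaining cells in its row and column, whose hook lengths are the odd integers $3,5,\ldots,2(n-i_n)-1$ and $3,5,\ldots,2i_n-1$. Applying the hook probability formula gives
\[
\prob(k_1=a_n)=\frac{1}{\binom{n}{2}}\prod_{k=1}^{n-i_n-1}\frac{2k+1}{2k}\cdot\prod_{k=1}^{i_n-1}\frac{2k+1}{2k},
\]
and using the Wallis estimate $\prod_{k=1}^{N}(2k+1)/(2k)\sim 2\sqrt{N/\pi}$ this simplifies to $n\prob(k_1=a_n)\to(4/\pi)\sqrt{1-u^2}$.

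For the second assertion, the plan is to write $\expt|\{i\le Cn:k_i=a_n\}|=\sum_{i=1}^{\lfloor Cn\rfloor}\prob(k_i=a_n)$ and argue that each summand matches the leading-order asymptotic of $\prob(k_1=a_n)$. After $i-1=O(n)$ Edelman--Greene slides only $O(n)$ cells out of $\binom{n}{2}$ have been consumed, so the residual shape agrees with the full staircase up to a boundary strip of size $O(n)$. The hook probability for the current bottom corner corresponding to $a_n$ should therefore equal $\prob(k_1=a_n)$ to leading order, and summing $\lfloor Cn\rfloor$ essentially equal probabilities yields $(4C/\pi)\sqrt{1-u^2}$.

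The main obstacle is obtaining the uniform control $\prob(k_i=a_n)=(1+o(1))\prob(k_1=a_n)$ for all $i\le Cn$, since the hook products in the first step range over reverse hooks of length $\Theta(n)$, so a deterministic boundary perturbation of size $O(n)$ can in principle materially alter them. One route is to redo the Wallis-type estimate on the perturbed product and bound the effect of removing $O(n)$ specific boundary cells using the monotonicity and comparison lemmas of \cref{S:dom}. A cleaner alternative is to invoke a global semicircle-type concentration for the empirical distribution of swaps together with the time-reversal symmetry $k_i\mapsto n-k_{N+1-i}$ of random sorting networks, which forces the early-time swap density to concentrate at the value $(4/\pi n)\sqrt{1-u^2}$ computed in the first part, from which the second assertion follows by dominated convergence.
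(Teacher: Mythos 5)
A preliminary remark: the paper does not prove Theorem \ref{semi} itself --- it is imported verbatim from \cite{angel2007random} --- so there is no internal proof to compare against; what follows compares your argument with the standard one.

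Your proof of the first assertion is correct and is the standard computation: under Edelman--Greene, $\{k_1=a_n\}$ is the event that the minimal entry of the uniform reverse staircase tableau lies at the corresponding corner, the hook probability factors as $\binom{n}{2}^{-1}$ times two products of the form $\prod_k (2k+1)/(2k)$ over the row and column legs of the reverse hook, and the Wallis asymptotic yields $\tfrac{4}{\pi}\sqrt{1-u^2}$.

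The second assertion is where there is a genuine gap. You correctly identify the obstacle --- after $i-1$ slides the residual shape is a random perturbation of the staircase by $O(n)$ boundary cells, and such a perturbation can change the hook product at a bottom corner by a constant factor rather than $1+o(1)$: deleting the top half of one of the two diagonals making up the reverse hook of that corner multiplies the product by roughly $\sqrt{2}$, by your own Wallis-type estimate. Neither of your proposed fixes closes this. Route (a) would in addition require that the residual tableau is conditionally uniform given its shape and that the deleted cells are spread out along the boundary, neither of which you address. Route (b) invokes the reversal symmetry $k_i\mapsto n-k_{N+1-i}$, which relates early swaps to late swaps and says nothing about two early times, while the global semicircle law concerns the empirical distribution averaged over all $N$ swaps and does not localize to $i\le Cn$. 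The fact you actually need --- and which makes the second assertion an immediate corollary of the first --- is the exact stationarity of the swap sequence: deleting $\pi_{k_1}$ and appending $\pi_{n-k_1}$ maps the uniform measure on sorting networks to itself, hence $k_i \eqd k_1$ for every $i$ and $\expt\card{\{i\le Cn: k_i=a_n\}}=\lfloor Cn\rfloor\,\prob(k_1=a_n)$. This is precisely the property quoted from \cite{angel2007random} and used in the proof of Proposition \ref{time-stat}; citing it (or proving the rotation map is a bijection on sorting networks) would complete your argument.
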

    
\noindent We use this theorem to prove the following lemma about expected height in $F$.

\begin{lemma}
\label{height-lim}
$$\lim_{t \to 0} \frac{\expt h(F, 0, t)}{t} \geq \frac{4}{\pi}.$$
\end{lemma}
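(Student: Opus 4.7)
The plan is to bound $\expt h(F, 0, t)$ from below by the probability that the single cell $(1, 1) \in \HH$ is added to the limit tableau by time $t$, and then to estimate this probability using monotonicity of rates together with the semicircle identity for first-swap probabilities. Since the cell $(1, 1)$ sits in column $z_1 = 1$, the pointwise inequality $h(F, 0, t) \geq \indic\{F(1, 1) < t\}$ holds, and because $F(1, 1)$ has no atoms by Corollary \ref{atom}, this gives $\expt h(F, 0, t) \geq \prob(F(1, 1) \leq t)$. It will therefore suffice to show $\prob(F(1, 1) \leq t) \geq 1 - e^{-(4/\pi) t}$, as dividing by $t$ and letting $t \to 0$ then delivers the desired lower bound $4/\pi$.

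For the finite-$n$ rate bound I would work with $v_n := v((1, 1), \emptyset) = n \cdot \hookp(T(0, n), (1, 1))$ in the tableau process $X_n = X(0, n, n)$. The cell $(1, 1)$ has hook length $h_{(1, 1)} = 1$, so it is a corner of $T(0, n) \smin A$ for every downward closed $A$ not containing it, and Lemma \ref{stoch-dom-1} then guarantees $v((1, 1), X_n(s)) \geq v_n$ for all $s < F_n(1, 1)$. A standard Poisson thinning argument (or equivalently, a direct coupling with an independent exponential clock of rate $v_n$) then yields $\prob(F_n(1, 1) > t) \leq e^{-v_n t}$.

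The crucial step, and the one I expect to require the most care, is establishing $v_n \to 4/\pi$; this is where the Edelman--Greene bijection and the semicircle law combine. By definition, $\hookp(T(0, n), (1, 1))$ is the probability that the cell labelled $1$ in a uniformly random reverse standard Young tableau on $T(0, n)$ equals $(1, 1)$, and the Edelman--Greene map sends the cell labelled $1$ to the first swap $\pi_{k_1}$ of the corresponding uniformly random $n$-element sorting network. Since the coordinate $(1, 1) \in \HH$ corresponds under the change of variables to a sorting-network position within $O(1)$ of the center $n/2$, Theorem \ref{semi} with $u = 0$ gives $v_n \to 4/\pi$. Passing to the $n \to \infty$ limit in $\prob(F_n(1, 1) \leq t) \geq 1 - e^{-v_n t}$, using $F_n \cvgd F$ together with the continuity of the law of $F(1, 1)$, produces $\prob(F(1, 1) \leq t) \geq 1 - e^{-(4/\pi) t}$ as required. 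The crude inequality used at the outset is visibly wasteful at fixed $t > 0$, but becomes sharp in the $t \downarrow 0$ regime precisely because $\expt h(F, 0, t) \sim (4/\pi) t$ is captured at leading order by the single cell $(1, 1)$ alone.
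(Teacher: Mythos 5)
Your proposal is correct and follows essentially the same route as the paper: lower-bound $h(F,0,t)$ by the indicator that the single cell $z_0=(1,1)$ has been added, use monotonicity of the rate of adding $z_0$ (so it is always at least the initial rate $v(z_0,\emptyset)$), and identify $n\,\hookp(T(0,n),z_0)\to 4/\pi$ via Theorem \ref{semi} and the Edelman--Greene correspondence between the cell labelled $1$ and the first swap. The only (harmless) difference is bookkeeping: you pass the probability $\prob(F_n(z_0)\le t)$ to the limit using the no-atoms property and a continuous-time exponential bound, whereas the paper passes the expectation via Lemma \ref{dct-lem} and uses the discrete bound $1-(1-p_i/n_i)^{\lfloor n_i t\rfloor}$ at the stopping time $T^{n_i}_t$.
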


\begin{proof}
By Lemma \ref{dct-lem}, we can first replace $\expt h(F, 0, t)$ by $\lim_{n \to \infty} \expt h(F_{n_i}, 0, T^{n_i}_t)$. Now we replace $h(F_{n_i}, 0, t)$ by the strictly smaller quantity $\indic(F_{n_i}(z_0) < T^{n_i}_t)$ where $z_0 = (1,1)$, and note that
$$
\prob(F_{n_i}(z_0) < T^{n_i}_t) \geq  1 - \left(1 - \frac{p_i}{n_i}\right)^{\lfloor n_it \rfloor} 
$$
where $p_i = v_{X_{n_i}} (z_0, \emptyset)$. We can make this replacement since the rate of adding the square $z_0$ is monotone increasing in time. Now by Theorem \ref{semi}, $p_i \to \frac{4}{\pi}$ as $i \to \infty$, so we have
\begin{align*}
\lim_{t \to 0} \frac{\expt h(F, 0, t)}{t} \geq \lim_{t \to 0} \frac{1 - e^{-\frac{4t}{\pi}}}{t} = \frac{4}{\pi},
\end{align*}
as desired.
\end{proof}
  For $x \in 2\inte + 1$, we now define $s(J, x, t)$ to be the number of swaps at location $x$ before time $t$ in the swap process $\oper{EG}(J)$, where the map $\oper{EG}$ is as in Section \ref{S:llcentre}.  We then have the following relationships between heights and swaps.

 \begin{lemma}
 \label{swap-height}
 Let $x \in 2 \inte$, $t \in [0, \infty)$, and let $G \in \scrG$ be translation invariant. Then $G \in \scrI$ and $\expt h(G, x, t) = \expt s(G, x, t)$ ($\scrI$ is defined at the beginning of Section \ref{S:llcentre}). We also have that $\expt h(F, x, t) = \expt s(F, x, t)$. 
 \end{lemma}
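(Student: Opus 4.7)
The plan is to establish the two claims of the lemma in turn.

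\textbf{$G \in \scrI$ almost surely.} The order-preserving property on $B := \{z : G(z) < \infty\}$ is inherited from the pre-limit inclusion functions $G_{n_i}$ by pointwise convergence of order-preserving functions. Injectivity on $B$ is obtained by adapting Proposition \ref{inject}: the rate estimate of Proposition \ref{rate-cor-2} extends verbatim from the centered to the shifted tableau processes $Y_n = X(\lfloor un\rfloor, n, n)$, since the cylinder coupling of Section \ref{S:rate} is translation invariant (the cylinder $\scrC(n)$ may be shifted to contain $T(\lfloor un\rfloor, n)$). Given this bound, the proof of Proposition \ref{inject} yields $\prob(G(z_1) = G(z_2)) = 0$ for each fixed pair $z_1 \ne z_2$, and a union bound over the countably many pairs gives almost-sure injectivity. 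For the condition that $G(x, 1) > t$ for infinitely many $x > 0$ and $x < 0$: the observation at the start of Section \ref{S:outside} gives that $G \sgeq F^u$ pointwise in the stochastic order on inclusion functions, so by Strassen's theorem there exists a coupling in which $G(z) \ge F^u(z)$ almost surely for all $z$; Corollary \ref{cor-spaces} applied to $F^u = F/\sqrt{1-u^2}$ then transfers the required infinitude to $G$.

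\textbf{The identity $\expt h(G, x, t) = \expt s(G, x, t)$.} By translation invariance of $G$, both sides are independent of $x$; denote the common values by $H^*$ and $S^*$. For each approximating finite process $Y_{n_i}$ with inclusion function $G_{n_i}$ converging in distribution to $G$, the Edelman--Greene bijection produces a bijection $z \mapsto \sigma(z)$ between the occupied squares of $Y_{n_i}(t)$ and the first $|Y_{n_i}(t)|$ swaps of $\oper{EG}(G_{n_i})$, yielding the global identity
\[
\sum_{x \text{ odd}} H_{n_i}(x, t) \;=\; |Y_{n_i}(t)| \;=\; \sum_{x \text{ odd}} S_{n_i}(x, t).
\]
The key geometric observation is that each EG sliding step changes the $x$-coordinate by $\pm 1$ while descending one unit in $y$, so a square at $(x_z, y_z)$ can only produce a swap at $\sigma(z)$ within distance $y_z - 1$ of $x_z$. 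Restricting the identity to odd $x$ in a window of radius $L$ about $\lfloor un_i \rfloor$, the windowed sums of heights and swaps differ only by a boundary ``flux'' term:
\[
\Bigl|\sum_{\text{window}} H_{n_i}(x, t) - \sum_{\text{window}} S_{n_i}(x, t)\Bigr| \;\le\; \#\{z \in Y_{n_i}(t) : \operatorname{dist}(x_z, \partial\text{window}) < y_z\}.
\]
By translation invariance of the limit, the expectation of the right-hand side is bounded uniformly in $n_i$ by $C \sum_y y \cdot \prob(G(0, y) \le t)$, and this sum is finite: achieving height $y$ in column $0$ requires $y/2$ successive corner additions, each with bounded rate, so $\prob(G(0, y) \le t)$ decays super-polynomially in $y$ by a Poisson-type comparison. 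Dividing by the window size $L + 1$, passing to the limit $n_i \to \infty$ (using translation invariance of $G$ together with dominated convergence justified by the same tail bound) and then letting $L \to \infty$, the flux contribution becomes negligible relative to the main term, forcing $H^* = S^*$. The identity $\expt h(F, x, t) = \expt s(F, x, t)$ follows by the same argument applied at $u = 0$.

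The main obstacle is the uniform tail bound on $\prob(G(0, y) \le t)$: Proposition \ref{rate-cor-2} yields only an expectation bound on rates, so one must iterate it along a single column to extract factorial-type decay sufficient to make $\sum_y y \cdot \prob(G(0, y) \le t)$ finite.
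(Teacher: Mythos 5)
Your treatment of the first claim ($G\in\scrI$) follows the paper's route: domination of the shifted process by a (sped-up) central process gives the bi-infinite sequence of untouched bottom-row squares via Corollary \ref{cor-spaces}, and injectivity comes from extending the rate bound of Proposition \ref{rate-cor-2} to the shifted processes and rerunning Proposition \ref{inject}. That part is fine.

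For the identity $\expt h(G,x,t)=\expt s(G,x,t)$ you take a genuinely different route from the paper, and it has a real gap. The paper works directly with the limit object $\oper{EG}(G)$ and writes $\expt s(G,1,t)$ as the double sum over rows $i$ and positions $j$ of $\prob(G(j,i)<t \text{ and } \pi(j,i)=1)$; exact translation invariance of the limit converts the sum over $j$ in row $i$ into a sum over the possible swap locations of the single square $(q_i,i)$, which telescopes to $\prob(G(q_i,i)<t)$ because $\pi(q_i,i)$ is confined to $[q_i-(i-1),q_i+(i-1)]$. Every term is nonnegative, so this is an unconditional rearrangement --- no integrability hypothesis, no boundary terms, no limit interchange. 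Your windowed pre-limit identity, by contrast, hinges on the flux estimate, and that estimate requires $\sum_y y\,\prob(G(0,y)\le t)<\infty$. You flag this yourself as ``the main obstacle'' but do not prove it, and it does not follow from anything in the paper: Lemma \ref{finite-expect} gives only $\sum_y \prob(G(q_y,y)\le t)\le Ct$, which by monotonicity along a column yields at best $\prob(G(0,y)\le t)=O(1/y)$ --- not enough to make $\sum_y y\,\prob$ converge. Proposition \ref{rate-cor-2} bounds rates only in expectation, not almost surely, so the ``Poisson-type comparison'' you invoke for super-polynomial decay is not available without substantial extra work (one would need to control conditional rates along the column, iterated $y/2$ times). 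A secondary issue: you bound the pre-limit flux ``by translation invariance of the limit,'' but the finite processes $Y_{n_i}$ are not translation invariant, so you would need a uniform-in-$n$, uniform-in-$x$ version of the tail bound before passing to the limit. The per-row rearrangement in the limit, as in the paper, sidesteps all of this.
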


 \begin{proof}
We can use the bound in Lemma \ref{stoch-dom-general} to conclude that $G \sgeq F$, thus implying that at any time $t$, there is a bi-infinite sequence of squares in the bottom row that have not been added to $G$. Moreover, there exists a constant $C$ such that for all large enough $n$ the modified rates in each $G_n$ are bounded up to the stopping time $T$ when $n^2/4$ squares have been added by $C$ times the modified rate in $F_n$. This allows us to conclude that $G$ is injective, by the proof of Proposition \ref{inject}. Therefore $G \in \scrI$.

\medskip

Thus we can apply the Edelman-Greene map $\oper{EG}$ from Section \ref{S:llcentre} to $G$, giving a translation-invariant swap process $\oper{EG}(G)$ and allowing us to define $s(G, x, t)$ for all $t$. We now show that $\expt h(G, x, t) = \expt s(G, x, t)$. By translation invariance, it suffices to consider the case $x = 1$. For each square $z \in \half$, let $\pi(z) \in 2 \inte + 1$ be the location of the swap in $\oper{EG}(G)$ corresponding to the square $z$. Since only squares $z' \geq z_0$ can have $\pi(z) = 1$, we have

\begin{align}
\begin{split}
\label{sum-s-h}
\expt s(G, 1, t) &= \sum_{z' \geq z_0} \prob(F(z') < t \mathand \pi(z') = 1) \\
&= \sum_{i=1}^\infty \sum_{j \in [1 - (i-1),  1 + (i-1)]} \prob(F(j, i) < t \mathand \pi(j, i) = 1) \\
&= \sum_{i=1}^\infty \sum_{j \in [1 - (i-1),  1 + (i-1)]} \prob\big(F(q_i, i) < t \mathand \pi(q_i, i) = 1+ q_i - j\big). \\
\end{split}
\end{align}
Here $q_i$ is either $1$ or $2$ depending on the parity of $i$. The second equality is just rearranging terms in the sum and the final equality comes from the translation invariance of the swap process. Since $$\pi(q_i, i) \in [q_i - (i-1),  q_i + (i-1)],$$ we have
$$
\sum_{j \in [- i , i ]} \prob\big(F(q_i, i) < t \mathand \pi(q_i, i) = 1 - j\big) = \prob(F(q_i, i) < t),
$$
and so the final line of \eqref{sum-s-h} is equal to $\expt h(G, 1, t)$. The exact same proof works for $F$.

\end{proof}
 \begin{prop}
 \label{trans-is-rescaled}
 Suppose $G \in \scrG$ is translation invariant. Then $G \eqd \frac{1}{\sqrt{1 - u^2}}F$.
 \end{prop}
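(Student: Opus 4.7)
The plan is to combine one-sided stochastic domination in the inclusion-function order with a matching expected-height calculation, so that under the Strassen coupling $G$ and $F^u$ must agree pointwise. The first ingredient is the domination $G \sleq F^u$: Corollary \ref{monotone-cor} applied to the slightly sped-up shifted process $X\bigl(\lfloor un\rfloor + a_n,\, n,\,(1+\epsilon)n/\sqrt{1-u^2}\bigr)$ versus the centered process $X(0, m, m)$ gives tableau-level domination of the former over the latter. Translating into inclusion functions (speeding a process up by $\alpha$ divides inclusion times by $\alpha$) yields $G_n \sleq \frac{1+\epsilon}{\sqrt{1-u^2}} F_m$ in the stochastic order. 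Passing to the subsequential limit $G_{n_i}\cvgd G$, invoking Theorem \ref{T:tableau-centre} for $F_m$, and finally sending $\epsilon \to 0$ gives $G \sleq F^u$. In the Strassen coupling, $G(z) \leq F^u(z)$ for every $z$, so the tableau under $G$ contains the tableau under $F^u$ at every time, and $h(G,x,t) \geq h(F^u,x,t)$ almost surely.

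The main step is to show $\expt h(G,0,t) = \expt h(F^u,0,t)$. Since $F^u = F/\sqrt{1-u^2}$, one has $h(F^u,x,t) = h(F,x,t\sqrt{1-u^2})$. Lemma \ref{height-lim} together with the time-stationary increments of $\oper{EG}(F)$ from Proposition \ref{time-stat} gives the lower bound $\expt h(F,0,t) \geq 4t/\pi$; a matching upper bound comes from Fatou applied to $\expt s(F_n, 0, t) \to 4t/\pi$ via Theorem \ref{semi}. Hence $\expt h(F^u,0,t) = \frac{4t\sqrt{1-u^2}}{\pi}$. For the translation invariant $G$, Lemma \ref{swap-height} gives $\expt h(G,0,t) = \expt s(G,0,t)$. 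Continuity of the Edelman--Greene map on $\scrI$ together with $G_{n_i} \cvgd G$ yields $s(G_{n_i},0,t) \cvgd s(G,0,t)$ at every continuity point of the limiting distribution, so Fatou bounds $\expt s(G,0,t) \leq \liminf_i \expt s(G_{n_i},0,t)$. A Poissonization argument, writing $\expt s(G_n,0,t) = \sum_i \prob(\tau_i \leq t)\,\prob(k^n_i = \lfloor un\rfloor)$ and concentrating the Poisson clock $\tau_i$ near $i/n$, identifies this liminf as $\frac{4t\sqrt{1-u^2}}{\pi}$ via Theorem \ref{semi}. Combined with the coupling inequality $\expt h(G,0,t) \geq \expt h(F^u,0,t)$, equality holds.

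To conclude, equality of expectations together with the pointwise inequality $h(G,0,t) \geq h(F^u,0,t)$ in the coupling forces $h(G,0,t) = h(F^u,0,t)$ almost surely. Taking $t$ over a countable dense set and using monotonicity in $t$ extends this to all $t$, and translation invariance of both $G$ and $F^u$ extends it to all $x$. Since $\{z : G(z)\leq t\}$ contains $\{z : F^u(z) \leq t\}$ with the same cardinality in every column pair $\{x, x+1\}$, the two sets coincide for every $t$, forcing $G = F^u$ in the coupling and hence $G \eqd F^u$.

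The main obstacle is the second step, specifically the Poissonization that transfers the discrete count in Theorem \ref{semi} to the continuous-time expectation $\expt s(G_n,0,t)$, together with the verification that swap-count functionals are continuous at typical $t$ under the limit distribution; the latter relies on an atom-free distribution for $G$, which follows once the domination $G \sleq F^u$ is in place (along the lines of Proposition \ref{inject} and Corollary \ref{atom}).
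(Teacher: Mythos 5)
Your proposal is correct and follows essentially the same route as the paper: establish $G \sleq F^u$ from the domination lemmas, show $\expt h(G,0,t) = \expt s(G,0,t) \leq \frac{4}{\pi}\sqrt{1-u^2}\,t$ via Lemma \ref{swap-height}, Fatou, and Theorem \ref{semi}, obtain the matching bound for $F^u$ from Lemma \ref{height-lim} together with linearity of $\expt s(F,0,t)$ from time-stationarity, and conclude via the coupling that equality of expected heights plus the pointwise inequality forces $G = F^u$. The only cosmetic difference is that you pin down the exact constant $\expt h(F,0,t) = \frac{4}{\pi}t$ (which the paper records only afterwards as a corollary), whereas the paper gets by with the one-sided bound $K \geq 4/\pi$.
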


 \begin{proof}
 First define 
 $$\scrK = \{f: 2\Z + 1 \times \real_+ \to \inte\},$$
and define  $H: \scrH \to \scrK$ by $H(J) = h(J, \cdot, \cdot)$. Note that $H$ a strictly decreasing function with respect to the pointwise orders on  $\scrH$ and $\scrK$. As every $G \in \scrG$ satisfies $G \sleq F^u$, to show that $F^u \eqd G$ it suffices to show that $\expt h(G, x, t) \leq \expt h(F^u, x, t)$ for all $x$ and $t$. By Theorem \ref{semi},
 $$\expt s(G_n, 0, t)  \to \left(\frac{4}{\pi} \sqrt{1 - u^2}\right)t.$$
Then by Fatou's Lemma and Lemma \ref{swap-height} we have that
 \begin{align}
 \label{sG}
 \expt h(G, 0, t) = \expt s(G, 0, t) \leq \lim_{n \to \infty} \expt s(G_n, 0, t) &=  \left(\frac{4}{\pi}\sqrt{1 - u^2}\right) t.
 \end{align}

Now by the time-stationarity of the increments in the limit EG$(F)$ (Proposition \ref{time-stat}), we have that $\expt s(F, 0, t)$ is linear in time. Therefore $\expt h(F, 0, t)$ must be linear in time as well since it is equal to 
$\expt s(F, 0, t)$ by Lemma \ref{swap-height}. Combining this with Lemma \ref{height-lim} gives that $\expt h(F, 0, t) = K t$ for some $K \geq \frac{4}{\pi}$, so

$$\expt h( F^u, 0, t) \geq  \left(\frac{4}{\pi}\sqrt{1 - u^2} \right)t,$$
which combined with \eqref{sG} gives the desired result.

\end{proof}

\begin{proof}[Proof of Theorem \ref{lim-outside}.]
We can finally combine Propositions \ref{sup-trans} and
\ref{trans-is-rescaled} to conclude the convergence of the processes
$G_n$ to $F^u$, which completes the proof of Theorem
\ref{lim-outside}. This in turn completes the proofs of
Theorems \ref{T:main} and \ref{T:YT_limit}.
\end{proof}

Proposition \ref{trans-is-rescaled} also allows us to conclude the following proposition about expected heights in $F$, and therefore swaps in $EG(F)$.

\begin{prop} For any $x$ and $t$, we have
$$\expt h(F, x, t) = \expt s(F, x, t) = \frac{4}{\pi}t.$$
\end{prop}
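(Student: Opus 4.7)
The plan is to assemble upper and lower bounds that were already obtained in the proof of Proposition \ref{trans-is-rescaled} into a sharp equality, with no substantive new argument required.

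First I would apply two immediate reductions. By the spatial stationarity of $F$ (Theorem \ref{T:YT_limit}), $\expt h(F, x, t)$ does not depend on $x$, so it suffices to work at $x = 0$. By Lemma \ref{swap-height} (applied to $F$, which is itself translation invariant), $\expt h(F, 0, t) = \expt s(F, 0, t)$, so the two quantities to be computed coincide and it suffices to show their common value equals $\frac{4}{\pi} t$.

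Next I would recall the two bounds already extracted inside the proof of Proposition \ref{trans-is-rescaled}. For the lower bound, Proposition \ref{time-stat} says that $\operatorname{EG}(F)$ has time-stationary increments, so $t \mapsto \expt s(F, 0, t)$ is linear; combined with Lemma \ref{swap-height} this forces $\expt h(F, 0, t) = K t$ for a single constant $K$, and Lemma \ref{height-lim} gives $K \geq \frac{4}{\pi}$. For the upper bound, I would specialize the argument surrounding display \eqref{sG} to $u = 0$: since in that case $F = F^0$ is itself a translation-invariant element of $\scrG$, applying Fatou's lemma to $s(F_n, 0, t)$ together with the semicircle convergence $\expt s(F_n, 0, t) \to \frac{4}{\pi} t$ from Theorem \ref{semi} gives $\expt s(F, 0, t) \leq \frac{4}{\pi} t$.

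Combining the two bounds forces $K = \frac{4}{\pi}$, whence $\expt h(F, 0, t) = \expt s(F, 0, t) = \frac{4}{\pi} t$; the spatial stationarity reduction then yields the stated equality for every $x$. There is no real obstacle at this stage — the work was done in Sections \ref{S:rate}--\ref{S:outside}; this final proposition merely records the value of the constant $K$ that the proof of Proposition \ref{trans-is-rescaled} left only bounded below by $\frac{4}{\pi}$.
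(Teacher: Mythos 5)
Your proposal is correct and follows exactly the route the paper intends: the paper states this proposition without proof as a consequence of Proposition \ref{trans-is-rescaled}, and the argument it has in mind is precisely your combination of the upper bound \eqref{sG} specialized to $u=0$ with the lower bound $\expt h(F,0,t)=Kt$, $K\geq \frac{4}{\pi}$, from Proposition \ref{time-stat}, Lemma \ref{swap-height} and Lemma \ref{height-lim}, followed by the reduction to $x=0$ via spatial stationarity.
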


\end{section}

\begin{section}{Appendix}
\label{S:appendix}
\begin{proof}[Proof of Lemma \ref{domination-implies-convergence}.]
$G_n$ is tight, so it has subsequential limits in distribution. Suppose that $G^1$ and $G^2$ are two different subsequential limits of $G$. Then there are subsequences $G_{\al(i)} \cvgd G^1$ and $G_{\beta(i)} \cvgd G^2$. Without loss of generality, we can assume that there are some numbers $a_1, \ddd a_m  > 0$ such that

\begin{equation*}
\prob\left(G^1 \in \prod_{k=1}^m[0, a_k]\right) - \prob\left(G^2 \in \prod_{k=1}^m[0, a_k]\right) > 0.
\end{equation*}
Then there is some $\delta > 0$ such that

\begin{equation*}
\prob\left(G^1 \in \prod_{k=1}^m[0, a_k+ \delta)\right) - \prob\left(G^2 \in  \prod_{i=1}^m[0, a_k + 2 \delta]\right) > 0,
\end{equation*}
By weak convergence, we get the following chain of inequalities.
\begin{align}
\begin{split}
\label{eqn-cvg-almost}
\limsup_{i \to \infty} \prob\left(G_{\beta(i)} \in \prod_{k=1}^m[0, a_k + 2 \delta]\right) &\leq \prob\left(G^2 \in \prod_{k=1}^m[0, a_k + 2 \delta]\right) \\
&< \prob\left(G^1 \in \prod_{k=1}^m[0, a_k + \delta)\right) \\
&\leq \liminf_{i \to \infty} \prob\left(G_{\al(i)} \in \prod_{k=1}^m[0, a_k + \delta)\right).
\end{split}
\end{align}
 However, letting $\ep = \frac{\delta}{a + \delta}$ where $a = \max_k a_k$, for any large enough $i$ there exists some $J$ such that for all $j \geq J$,

 \begin{align*}
 \prob\left( G^\ep_{\al(i)} \in \prod_{k=1}^m[0, a_k + \delta)\right) &\leq \prob\left((1 + \ep) G^\ep_{\al(i)} \in \prod_{k=1}^m[0, a_k + 2\delta]\right) \\
 &\leq \prob\left(G_{\beta(j)} \in \prod_{k=1}^m[0, a_k + 2\delta] \right),
 \end{align*}
since $(1+\ep)G^\ep_{\al(i)} \sgeq G_{\beta(j)}$ for all large enough $j$ by assumption. Thus

 \begin{equation*}
 \limsup_{i \to \infty}  \prob\left( G_{\beta(i)} \in \prod_{k=1}^m[0, a_k + 2\delta] \right) \geq \liminf_{i \to \infty} \prob\left(G_{\al(i)}^\ep \in \prod_{k=1}^m[0, a_k + \delta)\right),
 \end{equation*}
which contradicts \eqref{eqn-cvg-almost}, since
 \begin{align*}
 \liminf_{i \to \infty} \prob\left(G^\ep_{\al(i)} \in \prod_{k=1}^m[0, a_k + \delta)\right) &= \liminf_{i \to \infty} \prob\left(G_{\al(i)} \in \prod_{k=1}^m[0, a_k + \delta)\right).
 \end{align*}
  Thus $G^1 = G^2$ for any two subsequential limits of $G_n$, so $G_n$ has a distributional limit.
\end{proof}

\end{section}

{\noindent \bf Acknowledgements.} Omer Angel was supported in part by NSERC. Duncan Dauvergne was supported by an NSERC CGS D scholarship. B\'alint Vir\'ag was supported by the 
Canada Research Chair program, the NSERC Discovery Accelerator
grant, the MTA Momentum Random Spectra research group, and the ERC 
consolidator grant 648017 (Abert). We would also like to thank the Banff International Research Station for hosting a focussed research group that initiated this research.

\bibliography{LLRSNbib2}
\end{section}

\end{document}